\documentclass[11pt]{article}
\usepackage[toc,page]{appendix}
\usepackage{stmaryrd}
\usepackage{etoolbox}
\usepackage{pst-node}
\usepackage{tikz-cd} 
\appto\appendix{\addtocontents{toc}{\protect\setcounter{tocdepth}{0}}}
\usepackage{tikz-cd} 
\usepackage{hyperref}
\usepackage{cite}
\usepackage{enumitem}
\usepackage{amsfonts,amsmath, amssymb,latexsym}
\usepackage{mathtools}
\usepackage{multirow,comment}
\def\Z{{\mathbb Z}}

\def\SL{{\rm SL}}

\def\GL{{\rm GL}}

\def\SO{{\rm SO}}

\def\Stab{{\rm Stab}}

\def\Disc{{\rm Disc}}

\def\Aut{{\rm Aut}}

\def\Tr{{\rm Tr}}

\def\dim{{\rm dim}}

\def\Z{{\mathbb Z}}

\def\fz1{{F_{\Z,1}}}

\def\SO{{\rm SO}}

\def\max{{\rm max}}

\usepackage{tikz}
\usepackage{pgfplots}
\usepackage{pgfplots}
\usepgfplotslibrary{groupplots}
\pgfplotsset{compat=1.18}
\pgfplotsset{compat=1.18}
\usepackage{graphicx}
\usepackage{float}
\usepackage{mathrsfs}

\newtheorem{theorem}{Theorem}

\newtheorem{corollary}[theorem]{Corollary}

\newtheorem{lemma}[theorem]{Lemma}

\newtheorem{remark}[theorem]{Remark}

\newtheorem{proposition}[theorem]{Proposition}
\newtheorem{defn}[theorem]{Definition}
\newenvironment{proof}{\noindent {\bf Proof:}}{$\Box$ \vspace{2 ex}}
\newtheorem{question}[theorem]{Question}
\usepackage{xcolor,color,url,lmodern}

\newtheorem{maintheorem}{Theorem}

\newtheorem{conjecture}[theorem]{Conjecture}

\usepackage{xcolor}
\usepackage{environ}
\definecolor{darkpurple}{rgb}{0.5, 0.0, 0.5}
\newif\ifshowcontent
\showcontentfalse

\NewEnviron{highlighted}{
    \ifshowcontent
    \noindent
    {\bfseries\color{darkpurple} \textit{Note:} \BODY}
    \par
  \fi
}
\newenvironment{mainconjecture}[1][]{%
  \par\addvspace{\topsep}\noindent
  \textbf{Conjecture\if\relax\detokenize{#1}\relax\else\ (#1)\fi}%
  \enspace\itshape\ignorespaces
}{%
  \par\addvspace{\topsep}
}
\title{Escape and Non-Escape of Mass for Monogenic Binary Cubic Forms}

\author{Petros Ploumidis}
\date{}

\begin{document}

\maketitle
\begin{abstract}
We study the distribution of monogenic integral binary cubic forms $f(x,y)$ with bounded discriminant and bounded monogenizer inside a fundamental domain for the action of $\GL_2(\mathbb Z)$ on the space of real binary cubic forms. We show that when the monogenizer for $f$ is $[\pm1:0]$ (i.e., $f(\pm1,0)=1$), the corresponding set is fully concentrated in the cusp. We also prove a complementary result, namely, that when the monogenizers $[\pm1:0]$ (with height bounded in terms of the discriminant bound) are excluded, the resulting set has no escape of mass to the cusp. However, we surprisingly show that despite this non-escape of mass, the points in the fundamental domain are not equidistributed.
\end{abstract}

\section{Introduction}

A matrix $\gamma\in\mathrm{GL}_2(\mathbb R)$ acts on the
space of real binary cubic forms by
$(\gamma\cdot f)(x,y)=\det(\gamma)^{-1}f((x,y)\gamma)$.
This is called the twisted action. The space of real binary
cubic forms is prehomogeneous, and its nonzero-discriminant
locus splits into two orbits:
\[
V_{\mathbb R}^{\Disc\neq0}
=
V_{\mathbb R}^{+}\sqcup V_{\mathbb R}^{-}.
\]
For each of these two orbits, we wish to define a fundamental
domain and count all integral monogenic binary cubic forms with
discriminant of size $X$. As we will see later, we choose
a specific binary cubic form $v^\pm$ and obtain an
$n_i$-fold covering $\mathcal Fv^\pm$ of
$\mathrm{GL}_2(\mathbb Z)\backslash V_{\mathbb R}^{(i)}$,
where $\mathcal F$ is the Gauss fundamental domain,
$i=0$ corresponds to positive discriminant, $i=1$ corresponds
to negative discriminant, $n_0=6$ and $n_1=2$. We call a binary cubic form \textit{reduced} if it belongs to
$\mathcal Fv^\pm$.

Two natural questions arise, concerning escape of mass
and equidistribution. The Gauss fundamental domain allows
us to study escape of mass through the diagonal parameter
$a_t=\operatorname{diag}(t^{-1},t)$, loosely speaking, when $t$ is small we are in the main ball and when $t$ is large we are in the cusp.
Additionally, projection to the modular fundamental domain,
equipped with the hyperbolic measure $dx\,dy/y^2$, allows us
to consider potential equidistribution with respect to
this measure. Recall that non-escape of mass
is a necessary condition for equidistribution.
In particular, we are interested in understanding the
distribution of irreducible binary cubic forms in
$\mathcal F_Xv^\pm$ for which there exists an integral
vector $P=(\alpha,\beta)$ satisfying $f(P)=1$ and
$H(P)\ll X^{1/12-\delta}$ for some fixed $\delta>0$.

Fix constants $0<c_-<c_+$. For a vector $P\in\mathbb Z^2$ and $i\in\{0,1\}$, let $N_P^{(i)}(X)$ count  irreducible integral forms in $\mathcal Fv^{(i)}$ satisfying $f(P)=1$ and $c_-X\leq|\Disc(f)|<c_+X$. The index $i=0$ corresponds to positive discriminant and $i=1$ to negative discriminant. The constants $c_-$ and $c_+$ remain fixed as $X\to\infty$.

The cusp is the unbounded end of the modular fundamental domain, where the imaginary part tends to infinity. In the group coordinates introduced in Section~\ref{sec:mass-thue}, we truncate the cusp using the parameter $t$ in $a(t)=\operatorname{diag}(t^{-1},t)$. Let $N_{P,T}^{(i)}(X)$ count the same forms as $N_P^{(i)}(X)$, with the additional condition $t>T$. Full escape of mass means that, as $X$ grows, an asymptotically full proportion of the forms lies beyond every fixed truncation. Non-escape means that, for any prescribed proportion less than the total number of forms, a sufficiently large fixed truncation retains at least that proportion for all sufficiently large $X$. These conditions concern the reduced forms after taking into consideration some finite multiplicities due to the fact that $\mathcal{F}v^{(i)}$ is an $n_i$-fold covering of $\GL_2(\mathbb{Z})\backslash V_{\mathbb{R}}^{(i)}$. More precisely, for a fixed (primitive) point $P\in\mathbb Z^2$, we define 
\[
N_P^{(i)}(X)
=
\#\{f\in\mathcal Fv^{(i)}\cap V_{\mathbb Z}^{(i),\mathrm{irr}}:
f(P)=1,\ c_-X\leq|\Disc(f)|<c_+X\},
\]
and
\[
N_{P,T}^{(i)}(X)
=
\#\{f\in\mathcal F_Tv^{(i)}\cap V_{\mathbb Z}^{(i),\mathrm{irr}}:
f(P)=1,\ c_-X\leq|\Disc(f)|<c_+X\}.
\]
where \[\mathcal F_T
=
\{n(u)a(t)k\lambda\in\mathcal F:t>T\},
\qquad
V_{\mathbb Z}^{(i),\mathrm{irr}}
=
\{f\in V_{\mathbb Z}^{(i)}:f\text{ is irreducible over }\mathbb Q\}.\]
We say that the family of irreducible binary cubic forms in $\mathcal{F}v^{(i)}$ satisfying $f(P)=1$ exhibits non-escape of mass if
\[
\lim_{T\to\infty}\limsup_{X\to\infty}
\frac{N_{P,T}^{(i)}(X)}{N_P^{(i)}(X)}=0,
\]
and (full) escape of mass if
\[
\lim_{T\to\infty}\liminf_{X\to\infty}
\frac{N_{P,T}^{(i)}(X)}{N_P^{(i)}(X)}
=1.
\]
More generally, let $\mathcal P_X\subseteq\mathbb Z_{\mathrm{prim}}^2$.
We say that the family of pairs $(f,P)$ with $P\in\mathcal P_X$
exhibits \emph{averaged non-escape of mass} if
\[
\lim_{T\to\infty}\limsup_{X\to\infty}
\frac{\sum_{P\in\mathcal P_X}N_{P,T}^{(i)}(X)}
     {\sum_{P\in\mathcal P_X}N_P^{(i)}(X)}
=0,
\]
provided the sums are finite and the denominator is positive
for all sufficiently large $X$.
Our first two results give a dichotomy according to the fixed solution.

\begin{maintheorem}[Non-escape for a fixed solution; Theorem~\ref{thm:non-escape-fixed-P}]
For every fixed (primitive) vector $P\neq(\pm1,0)$, and $i\in \{0,1\}$, the family of irreducible binary cubic forms in $\mathcal{F}v^{(i)}$ satisfying $f(P)=1$ exhibits non-escape of mass.
\end{maintheorem}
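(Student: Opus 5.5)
We reduce the problem to counting forms in $V_{\mathbb Z}$ with $f(P)=1$ and bounded discriminant, organized by the position of $f$ in the Gauss fundamental domain. Write a reduced form as $f=\gamma\cdot v^{(i)}$ with $\gamma=n(u)a(t)k\lambda\in\mathcal F$. The plan is to obtain an asymptotic lower bound $N_P^{(i)}(X)\gg X^{\theta}$ for the full count, together with an upper bound for the cuspidal part of the form $N_{P,T}^{(i)}(X)\ll \epsilon(T)X^{\theta}+o(X^{\theta})$, where $\epsilon(T)\to0$. Non-escape of mass then follows by dividing the two bounds.

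The key structural input concerns the coefficients. When $t$ is large, the coefficients of $f=ax^3+bx^2y+cxy^2+dy^3$ in the cusp have sizes $a\asymp \lambda t^{-3}$, $b\asymp\lambda t^{-1}$, $c\asymp \lambda t$, $d\asymp\lambda t^{3}$, with $\lambda\asymp X^{1/4}$. Once $t^3>\lambda$, the integral point is forced to have $a=0$, making $f$ reducible; so every irreducible form has $t\ll X^{1/12}$. Write $P=(\alpha,\beta)$ with $\beta\neq0$, which holds because $P\neq(\pm1,0)$ and $P$ is primitive. The condition $f(P)=1$ is one linear equation in $(a,b,c,d)$ in which $d$ appears with coefficient $\beta^3\neq0$. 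Thus for fixed $P$ we can solve for $d$: $d=(1-a\alpha^3-b\alpha^2\beta-c\alpha\beta^2)/\beta^3$. The family is therefore a union of at most $\beta^3$ translates of a rank-$3$ lattice in the hyperplane $\{f(P)=1\}$, cut out by the congruence that makes $d$ integral.

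The counting proceeds in two steps. First, in the main body ($t\le T$), apply Davenport's lemma, or equivalently geometry of numbers on this $3$-dimensional affine lattice, to the region $\mathcal F_{\le T}v^{(i)}\cap\{f(P)=1\}\cap\{c_-X\le|\Disc|<c_+X\}$. The volume of this slice scales like $X^{3/4}\cdot X^{-1/4}=X^{1/2}$, so the count is $\asymp_P X^{1/2}\,\mathrm{vol}(\text{$t\le T$ part})$ plus lower-order error, with positive volume for large $T$. Second, in the cusp region $T<t\ll X^{1/12}$, bound the count fiberwise: for each $t$-shell, count integer triples $(a,b,c)$ with $|a|\ll\lambda t^{-3}$, $|b|\ll\lambda t^{-1}$, $|c|\ll \lambda t$, and insist $a\neq0$. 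The crucial point is that $d$ is determined rather than free, so the fiber count is $\ll (\lambda t^{-3}+1)\cdots$, and the volume part integrates against the Haar measure $t^{-2}\,d^\times t$ to give $\ll X^{1/2}T^{-c}$. The non-volume error terms, coming from the projections appearing in Davenport's lemma, must be shown to be $o(X^{1/2})$ uniformly over $t\le X^{1/12}$. The constraint $a\ge1$ kills the worst projections; we also need that the hyperplane $f(P)=1$ is transverse to the cusp direction. Transversality holds exactly because $\beta\ne0$, whereas for $P=(\pm1,0)$ the condition fixes $a=\pm1$, which is what forces that case into the cusp.

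The main obstacle is this uniform error estimate near the top of the cusp, $t\approx X^{1/12}$, where $|a|$ is bounded and the box in $(a,b,c)$ becomes very thin. There I would fix $a$ and count the pairs $(b,c)$ directly, using the discriminant condition together with the congruence on $d$ modulo $\beta^3$. The first thing to check is that the contribution is $\ll X^{1/2}/t^{2}$ summed over dyadic $t$, which would make it $O(X^{1/2}T^{-2})$. Combining this with the positive main-body asymptotic gives $\limsup_X N_{P,T}^{(i)}/N_P^{(i)}\ll_P T^{-c}\to0$.
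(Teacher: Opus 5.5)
Your overall architecture matches the paper's. You take a lower bound for the full count from the part $t\le T$ via geometry of numbers on the rank-three affine lattice $\{f(P)=1\}$. You take a dyadic upper bound in the cusp using $a\neq0$ (so $t\ll X^{1/12}$) and $\beta\neq0$ (so $d$ is determined by $a,b,c$).

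The gap is the normalization, and as written the plan cannot be carried out. For $t\le T$ the forms fill a region of linear size $\lambda\asymp X^{1/4}$ in all four coefficient directions, and the hyperplane $f(P)=1$ passes within $O(1)$ of the origin. Its three-dimensional slice therefore has volume $\asymp\lambda^3\asymp X^{3/4}$, which is the paper's $N_P^{(i)}(X)\gg_P X^{3/4}$. Your extra factor $X^{-1/4}$ looks like the heuristic probability that $f(P)=1$. That probability multiplies the four-dimensional count $\asymp X$, giving $X^{3/4}$ again, and it is already accounted for by passing to the slice. Your own fiber count says the same thing: $\lambda t^{-3}\cdot\lambda t^{-1}\cdot\lambda t=\lambda^3t^{-3}$.

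This slip has concrete consequences:
\begin{itemize}
\item Your target $N_{P,T}^{(i)}(X)\ll\epsilon(T)X^{1/2}+o(X^{1/2})$ is false. For fixed $T$, the shell $T<t<2T$ alone contains $\gg_{P,T}X^{3/4}$ forms, by the same argument as the lower bound.
\item Your requirement that the non-volume errors be $o(X^{1/2})$ is out of reach of Davenport's lemma, whose two-dimensional projection term here has size $\lambda^2\asymp X^{1/2}$ (the paper's $X^{1/2}/|\beta|^2$). It is also unnecessary.
\item You never remove reducible forms from the lower bound. You need something like Lemma~\ref{lem:reducible-fixed-P}, which gives $O(X^{1/2+\varepsilon})$. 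That is harmless against $X^{3/4}$ but fatal against $X^{1/2}$.
\end{itemize}

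Once the exponent is corrected, your route works, and in the cusp it is simpler than the paper's. The paper bounds each shell $S\le t<2S$ by Schmidt's theorem applied to $\eta_S\Lambda_{\alpha,\beta}$, whose covolume is $\gg|\beta|^3S^3$. That gives $\lambda^3/(|\beta|^3S^3)+\lambda^2/|\beta|^2+1$, plus the $X^{1/2+\varepsilon}$ from the reducible-forms lemma.

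Your box count needs none of this, and no Haar-measure integration enters either. Irreducible forms automatically have $a\neq0$, so $S\ll\lambda^{1/3}$ and every side of the box $1\le|a|\ll\lambda S^{-3}$, $|b|\ll\lambda S^{-1}$, $|c|\ll\lambda S$ is $\gg1$. With $d$ fixed by $f(P)=1$, this gives $\ll\lambda^3S^{-3}$ forms per shell, up to the bounded multiplicity $n_i$. Summing over shells gives $N_{P,T}^{(i)}(X)\ll X^{3/4}T^{-3}$ with no error term. The top shells $t\approx X^{1/12}$ that you flagged as the main obstacle contribute only $O(\lambda^2)=o(X^{3/4})$, so no refined count of $(b,c)$ is needed there.

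What the box count gives up is the factor $|\beta|^{-3}$ coming from the covolume, which is your congruence modulo $\beta^3$. The paper needs that factor for uniformity in $P$ in Theorem~\ref{thm:non-escape-averaged}, but it is irrelevant for fixed $P$. Combined with the lower bound $N_P^{(i)}(X)\gg_P X^{3/4}$, this yields $\limsup_X N_{P,T}^{(i)}(X)/N_P^{(i)}(X)\ll_P T^{-3}$, as in the paper.
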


\begin{maintheorem}[Full escape; Theorem~\ref{thm:escape-e1}]
For $P=(\pm1,0)$, and $i\in \{0,1\}$,
the family of irreducible binary cubic forms in $\mathcal{F}v^{(i)}$ satisfying $f(P)=1$ exhibits (full) escape of mass.
\end{maintheorem}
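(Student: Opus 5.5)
The plan is to compare two counts of different orders in $X$. I will show that $N_P^{(i)}(X)\gg X^{5/6}$, while for every fixed $T$ the truncated complement satisfies
\[
N_P^{(i)}(X)-N_{P,T}^{(i)}(X)\ll_T X^{3/4}.
\]
The ratio $N_{P,T}^{(i)}(X)/N_P^{(i)}(X)$ then tends to $1$ for each fixed $T$, which is more than the required $\liminf$ statement.

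\emph{Step 1 (upper bound in the main body).} Write a reduced form as $f=n(u)a(t)k\lambda v^{(i)}$ with $|u|\le 1/2$, $k$ in a compact set, and $\lambda\asymp X^{1/4}$, since $|\Disc(f)|\asymp X$ and $\Disc$ is homogeneous of degree $4$. Under the twisted action, the coefficients $(a,b,c,d)$ of $a(t)k\lambda v^{(i)}$ have sizes $\ll\lambda t^{-3},\lambda t^{-1},\lambda t,\lambda t^{3}$. The unipotent $n(u)$ with $|u|\le1/2$ only mixes each coefficient with those of smaller weight, so these bounds survive up to constants. For $t\le T$, every integral $f$ in this part of the domain therefore has $|b|\ll_T X^{1/4}$, $|c|\ll_T X^{1/4}$ and $|d|\ll_T X^{1/4}$. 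Imposing $f(\pm1,0)=1$ fixes $a=\pm1$ (the sign depends on the convention for $P$), so the number of such forms is at most the number of integer triples $(b,c,d)$ in a box of volume $\ll_T X^{3/4}$. This gives the claimed bound $N_P^{(i)}(X)-N_{P,T}^{(i)}(X)\ll_T X^{3/4}$. It needs only the coarse coefficient bounds for the Gauss domain, with no lattice-point asymptotics.

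\emph{Step 2 (lower bound on the whole family).} I will count monic irreducible forms $x^3+bx^2y+cxy^2+dy^3$ with $c_-X\le|\Disc|<c_+X$ and $b\in\{0,1,2\}$. The translations $x\mapsto x+ky$ fix the point $(1,0)$ and shift $b$ by $3k$, so this normalisation uses up exactly that freedom. After depressing, $\Disc\approx-4p^3-27q^2$ with $p\approx c$ and $q\approx d$. Hence the number of such forms is $\asymp X^{1/3}\cdot X^{1/2}=X^{5/6}$, taking $c$ in a range of size $X^{1/3}$ and $d$ in the corresponding range of size $X^{1/2}$. Reducible forms are negligible, for instance because those with a rational linear factor number $O(X^{1/2+\epsilon})$ by a direct root-counting argument.

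\emph{Step 3 (the main obstacle).} The remaining task is to show that a positive proportion of the forms in Step 2 are reduced, i.e.\ lie in $\mathcal Fv^{(i)}$, up to the bounded multiplicity $n_i$ and the $\pm$ ambiguity. For such forms, $b=O(1)$ while $|c|\asymp X^{1/3}$ and $|d|\asymp X^{1/2}$. I will read off the Gauss coordinates from these coefficient sizes: this forces $t\asymp\lambda^{1/3}\asymp X^{1/12}$, which is deep in the cusp. In that regime the modular fundamental domain is locally just the strip $|u|\le1/2$ with no further constraint from $|z|\ge1$. Reduction of $f$ therefore amounts to choosing the translation parameter correctly, and that is exactly what the normalisation $b\in\{0,1,2\}$ does, up to a bounded number of boundary adjustments. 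I expect this to require the most care. One must check on an explicit open set of $(c,d)$ with positive density that the point $a(t)k\lambda v^{(i)}$ realising $f$ lands with $u$ in $[-1/2,1/2]$, and that the compact $K$-part stays inside the range allowed by $\mathcal F$. I would first try to handle both of these by explicit continuity in the root coordinates of the depressed cubic.

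Combining the three steps gives $N_P^{(i)}(X)\gg X^{5/6}$. The main-body count $N_P^{(i)}(X)-N_{P,T}^{(i)}(X)$ is $\ll_T X^{3/4}=o(X^{5/6})$ for every fixed $T$, which proves full escape of mass.
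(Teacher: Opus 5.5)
Your Steps 1 and 2 are fine. Step 1 is simpler than the paper's Schmidt/dyadic count, and it suffices because you only need fixed $T$.

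The gap is in Step 3. The normalisation $b\in\{0,1,2\}$ is \emph{not} the reduction, even up to bounded adjustments, so the positive-proportion claim is false. To see this, write $f=n(b/3)\cdot f_{\mathrm{dep}}$ with $f_{\mathrm{dep}}=x^3+pxy^2+qy^3$. Then $f_{\mathrm{dep}}=X^{1/4}a(X^{1/12})\cdot G$, where $G=x^3+p'xy^2+q'y^3$ with $p'=pX^{-1/3}$, $q'=qX^{-1/2}$, and $G$ lies in a fixed compact subset of $V_{\mathbb R}^{(i)}$. If $G=n(u_G)a(t_G)k_G\lambda_G\cdot v^{(i)}$, the identity $a(t)n(x)=n(t^2x)a(t)$ gives two consequences:
\begin{itemize}
\item $t=X^{1/12}t_G$. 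So your $t\asymp X^{1/12}$ is correct, but it should be derived this way; Lemma~\ref{lem:iwasawa-coefficients} presupposes bounded $u$, so reading $t$ off the coefficient sizes is circular.
\item $u=X^{1/6}u_G+b/3$.
\end{itemize}
Since $u_G$ is a non-constant function of $(p',q')$, $u$ is typically of size $X^{1/6}$. For example, for $\Disc>0$ with the Hessian root as covariant point, $|u_G|=3|q'|/(2|p'|)$. Equivalently, for monic $f$ the formulas of Lemma~\ref{lem:iwasawa-coefficients} give $b=3u+\lambda b_k/t$, and $\lambda/t\asymp X^{1/6}$, so $b=O(1)$ puts no constraint on $u$. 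Reduced monic forms at this height typically have $|b|\asymp X^{1/6}$. Only the normalised forms with $u_G=O(X^{-1/6})$, which number $O(X^{2/3})$, actually lie in $\mathcal Fv^{(i)}$. That is below the $X^{3/4}$ you need to beat, and the continuity check you propose fails on every positive-density open set of $(c,d)$. The $K$-part, by contrast, is no issue: $k$ is unrestricted in $\mathcal F$.

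The missing step is that you do not need the normalised form itself to be reduced, only that reducing it preserves the marking.
\begin{itemize}
\item Since $t\asymp X^{1/12}\geq 1$, we have $\nu(t)=[-\tfrac12,\tfrac12]$.
\item Let $m$ be the integer nearest $u$. Then $n(-m)\cdot f=n(u-m)a(t)k\lambda\cdot v^{(i)}$ lies in $\mathcal Fv^{(i)}$.
\item Because $(1,0)n(-m)=(1,0)$, this reduced form is still monic, has the same discriminant, and is still irreducible.
\item Distinct normalised forms give distinct reduced forms, because integral translations change $b$ by multiples of $3$.
\end{itemize}
This gives $N_P^{(i)}(X)\gg X^{5/6}$ directly. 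The observation is exactly the paper's Lemma~\ref{lem:leading-coefficient-stability}: deep in the cusp, reduction is an integral lower-unipotent translation fixing $(\pm1,0)$. The paper combines that lemma with Bhargava's averaging over $v'\in B_\varepsilon^{(i)}$ and a lattice-point count. This is also why it needs the explicit $T$-dependence in Proposition~\ref{prop:compact-upper-e1}, applied at $T=X^\theta$. With the repair, your route replaces the averaging by an elementary injection of translation classes of monic forms. Together with your Step 1, it gives a simpler proof of the theorem.
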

\noindent
The non-escape estimates are sufficiently uniform to allow the marked solution to vary with $X$.

\begin{maintheorem}[Averaged non-escape; Theorem~\ref{thm:non-escape-averaged}]
Fix $\delta>0$ and $i\in\{0,1\}$, and let $Y=Y(X)$ satisfy $2\leq Y\leq X^{1/12-\delta}$. Define
$\mathcal P(Y)=\{(\alpha,\beta)\in\mathbb Z^2:\gcd(\alpha,\beta)=1,\ Y\leq|\alpha|,|\beta|<2Y\}$.
Then the family of pairs $(f,P)$ with $P\in \mathcal P(Y)$ exhibits averaged non-escape of mass.
\end{maintheorem}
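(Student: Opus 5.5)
The plan is to convert each constraint $f(P)=1$ into a leading-coefficient condition, count lattice points in the resulting three-dimensional slices of $\mathcal Fv^{(i)}$ by volume plus a Davenport-type error, and only then sum over $P\in\mathcal P(Y)$. The denominator will come out as $\#\mathcal P(Y)\cdot X^{5/6}$ and the numerator as $\#\mathcal P(Y)\cdot\epsilon(T)X^{5/6}$ with $\epsilon(T)\to0$. The order of the steps is: normalisation, volume of the slices, error terms, summation.

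\textbf{Normalisation.} For each primitive $P=(\alpha,\beta)\in\mathcal P(Y)$, choose $\gamma_P\in\SL_2(\mathbb Z)$ with first row $(\alpha,\beta)$, so that $(1,0)\gamma_P=P$. Then $f(P)=1$ holds exactly when $g=\gamma_P\cdot f$ has leading coefficient $1$. Hence $N_P^{(i)}(X)$, respectively $N_{P,T}^{(i)}(X)$, is the number of integral monic $g=x^3+bx^2y+cxy^2+dy^3$ that are irreducible, satisfy $c_-X\le|\Disc g|<c_+X$, and have $\gamma_P^{-1}\cdot g\in\mathcal Fv^{(i)}$, respectively $\in\mathcal F_Tv^{(i)}$. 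This is a lattice-point count in the region
\[
R_P(T)=\gamma_P\cdot\bigl(\mathcal F_Tv^{(i)}\bigr)\cap\{a=1,\ c_-X\le|\Disc|<c_+X\}
\]
of the $(b,c,d)$-space, with $R_P=R_P(0)$. I would first show that reducible monic forms contribute $o(X^{5/6})$ uniformly in $P$. A monic reducible form has a rational, hence integral, root, so it factors as $(x-ry)(x^2+\cdots)$, and a crude count suffices.

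\textbf{Volume of the slices.} Write $f=n(u)a(t)k\lambda v^{(i)}$. The slice condition $f(P)=1$ becomes $\lambda\,(k\cdot v^{(i)})\bigl(P\,n(u)a(t)\bigr)=1$, and the Jacobian of $f\mapsto g$ restricted to $\{a=1\}$ has determinant $1$. Integrating over $\lambda$ first, with $\lambda\asymp X^{1/4}$, I expect
\[
\mathrm{vol}\,R_P(T)=X^{5/6}\int_{t>T}\int_u\int_K w_P(u,t,k)\,dk\,du\,t^{-2}dt .
\]
Here $w_P\asymp|(k\cdot v)(\tilde P)|^{-4/3}$ with $\tilde P=P\,n(u)a(t)=(\alpha t^{-1},(\alpha u+\beta)t)$, so $|\tilde P|\gtrsim|\beta|t\ge Yt$ when $t$ is large. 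The value $(k\cdot v)(\tilde P)$ is of size $|\tilde P|^3$, apart from the directions of $\tilde P$ near the real roots of $k\cdot v$; near those roots the singularity $|\theta|^{-4/3}$ is too strong to integrate naively against $dk$. I would handle this by integrating over $k\in\SO_2$ before $t$, which averages the direction of $\tilde P$. This is where $\beta\ne0$ is essential, and exactly what fails for $P=(\pm1,0)$. The outcome I expect is
\[
\mathrm{vol}\,R_P(T)\ll X^{5/6}\,T^{-\kappa}
\]
for some $\kappa>0$, uniformly in $P$ with $\beta\neq0$, together with $\mathrm{vol}\,R_P(0)\gg X^{5/6}$ uniformly. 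The lower bound comes from the main-ball region $t\asymp1$: there $|\tilde P|\asymp Y$, the form is bounded, and $f(\tilde P)$ ranges over all sizes as $k$ varies.

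\textbf{Error terms and summation (main obstacle).} Davenport's lemma bounds $\bigl|\#(R_P(T)\cap\mathbb Z^3)-\mathrm{vol}\bigr|$ by the volumes of the projections of $R_P(T)$. This is where the difficulty lies, because $\gamma_P$ has entries of size $Y$ and distorts the region. The coefficients of $g$ are $\lambda$ times values of $k\cdot v$ at $\tilde P$ and at a second, complementary vector of size $\ll Y$. So $c$ and $d$ range over intervals of lengths $\ll X^{1/4}Y^{2}$ and $X^{1/4}Y^{3}$ respectively, with $b$ taken modulo $3$ via $x\mapsto x+ky$. The one- and two-dimensional projections then have volume $\ll X^{1/2}Y^{O(1)}$, and the threshold $Y\le X^{1/12-\delta}$ should be exactly what makes this $\ll X^{5/6-\eta}$. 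In the deep cusp the region becomes long and thin, and there I would truncate. For $t$ beyond a power of $X$, the integrality of the small coefficients forces them to vanish, giving reducible forms, and those were already discarded. Summing over $P$ then gives
\[
\frac{\sum_{P}N^{(i)}_{P,T}(X)}{\sum_P N^{(i)}_P(X)}\le\frac{\#\mathcal P(Y)\bigl(C X^{5/6}T^{-\kappa}+O(X^{5/6-\eta})\bigr)}{\#\mathcal P(Y)\bigl(cX^{5/6}-O(X^{5/6-\eta})\bigr)}.
\]
Taking $\limsup_X$ and then $T\to\infty$ gives $0$. I expect the uniform projection bounds to be the hard step, specifically controlling the cusp region's projections after the $Y$-distortion without losing more than the $X^{1/12}$ margin.
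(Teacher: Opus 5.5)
Your overall architecture (count each slice $f(P)=1$ with constants uniform in $P$, then sum over $\mathcal P(Y)$) matches the paper's. However, the central volume claim is false, and the rest of your plan is calibrated to it.

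For $P\in\mathcal P(Y)$ the slice contains $\asymp X^{3/4}/Y^{3}$ forms, not $\gg X^{5/6}$. The paper proves $J^{(i)}(X;Y)\asymp X^{3/4}/Y$ (Corollary~\ref{cor:averaged-count-order}), whereas your bounds would give $J^{(i)}(X;Y)\gg Y^{2}X^{5/6}$. A quick heuristic check: the main ball contains $\asymp X$ integral forms with $|\Disc|\asymp X$ and coefficients $\ll X^{1/4}$. So $f(P)$ ranges over an interval of length $\asymp X^{1/4}Y^{3}$, and only a proportion $\asymp X^{-1/4}Y^{-3}$ of these forms should have $f(P)=1$.

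The same thing shows up in your own parametrisation. With $\phi=(k\cdot v)(\tilde P)$, the slice condition is $\lambda=1/\phi$. The discriminant window $\lambda\asymp X^{1/4}$ then confines $k$ to a set of measure $\asymp X^{-1/4}|\tilde P|^{-3}$ around the zeros of $k\mapsto(k\cdot v)(\tilde P)$. On that set the density after the $\lambda$-integration is $\lambda^{4}t^{-3}\asymp Xt^{-3}$. This gives a volume $\asymp X^{3/4}\int|\tilde P|^{-3}t^{-3}\,du\,dt\asymp X^{3/4}Y^{-3}$.

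Your weight $X^{5/6}|\phi|^{-4/3}$ does not encode this window, which is why it fails to be integrable in $k$. That divergence occurs at every fixed $(u,t)$, so integrating over $k$ before $t$ cannot cure it. (With the paper's lower-triangular $n(u)$ one has $\tilde P=((\alpha+\beta u)t^{-1},\beta t)$; your displayed $\tilde P$ does not yield $|\tilde P|\gg|\beta|t$.)

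The order $X^{5/6}$ is special to $P=(\pm1,0)$. There $|\tilde P|=t^{-1}$ shrinks and the mass accumulates at $t\asymp X^{1/12}$. That is precisely the escape phenomenon, not something the main ball produces for other $P$.

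Consequently your error budget is wrong as well. After summing, the main term is only $X^{3/4}/Y$. A per-$P$ error of size $X^{1/2+\varepsilon}Y^{c}$ is therefore admissible only when $Y^{3+c}\ll X^{1/4-\varepsilon}$. To reach the full range $Y\le X^{1/12-\delta}$ you need $c=0$, i.e.\ errors with no loss in $Y$ at all. Projection bounds of the shape $X^{1/2}Y^{O(1)}$, or a bound $o(X^{5/6})$ for reducible forms, do not suffice, and the threshold $1/12$ does not actually come out of your computation.

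The $Y$-distortion you flag as the main obstacle is created by moving $P$ to $(1,0)$ with $\gamma_P$. The paper avoids it by leaving $P$ alone:
\begin{itemize}
\item On each dyadic range $S\le t<2S$, with $t\ll\lambda^{1/3}$ forced by $a\neq0$, it applies $\eta_S$. This makes the region round of radius $O(\lambda)$.
\item All $P$-dependence goes into the lattice $\eta_S\Lambda_{\alpha,\beta}$. Its explicit basis gives covolume between $|\beta|^{3}S^{3}$ and $(|\alpha|+|\beta|)^{3}S^{3}$.
\item Schmidt's theorem then gives, for each $P$, the cuspidal upper bound $X^{3/4}/(T^{3}|\beta|^{3})$ and the total lower bound $X^{3/4}/(|\alpha|+|\beta|)^{3}$. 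The errors are $X^{1/2}/|\beta|^{2}+X^{1/2+\varepsilon}$, the last coming from a divisor-bound count of reducible forms that is uniform in $P$.
\item Summing over the $\asymp Y^{2}$ points of $\mathcal P(Y)$ pits main terms of order $X^{3/4}/Y$ against a total error $Y^{2}X^{1/2+\varepsilon}$. This is exactly where $Y\le X^{1/12-\delta}$ enters, and the ratio is $\ll T^{-3}+o(1)$.
\end{itemize}
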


We briefly explain the counting arguments. For $P=(\alpha,\beta)$, the equation $f(P)=1$ defines the affine hyperplane
$\mathcal H_P=\{(a,b,c,d)\in\mathbb R^4:a\alpha^3+b\alpha^2\beta+c\alpha\beta^2+d\beta^3=1\}$.
Its integral points form an affine lattice of rank three. When $\beta\neq0$, the transformed lattice has covolume of order $|\beta|^3t^3$ in the cusp. Applying Schmidt's lattice-point theorem \cite{schmidt1968asymptotic,widmer2012lipschitz} gives
\[
N_{P,T}^{(i)}(X)
\ll_P \frac{X^{3/4}}{T^3}+X^{1/2+\varepsilon},
\qquad
N_P^{(i)}(X)\gg_P X^{3/4}.
\]
For $0<\varepsilon<1/4$, these estimates show that the limiting upper proportion in the cusp is $O_P(T^{-3})$, which tends to zero as $T\to\infty$.

For $P=(\pm1,0)$, the Thue condition fixes the leading coefficient of the form, and the transformed lattice contracts in the cusp. Set $M_{P,T}^{(i)}(X)=N_P^{(i)}(X)-N_{P,T}^{(i)}(X)$, so that $M_{P,T}^{(i)}(X)$ counts forms satisfying the same marking and discriminant conditions with $t\leq T$. A direct lattice-point estimate gives $M_{P,T}^{(i)}(X)\ll X^{3/4}T^3$, while \textit{Bhargava's averaging method} \cite{bhargava2010density} gives $N_P^{(i)}(X)\gg X^{5/6}$, which is a sufficient bound for our purpose (see Remark~\ref{averaging method important}). Consequently, for every fixed $T$,
\[
1-\frac{N_{P,T}^{(i)}(X)}{N_P^{(i)}(X)}
=
\frac{M_{P,T}^{(i)}(X)}{N_P^{(i)}(X)}
\ll T^3X^{-1/12},
\]
which tends to zero as $X\to\infty$. 

For the averaged result, summing the estimates over $\mathcal P(Y)$ gives a main contribution of order $X^{3/4}/Y$, while the accumulated error is bounded by $O_\varepsilon(Y^2X^{1/2+\varepsilon})$. Their ratio is of order $Y^3X^{-1/4+\varepsilon}$, explaining the restriction $Y\leq X^{1/12-\delta}$. The exponent $1/12$ therefore comes from the range in which the counting error is smaller than the main contribution.

We conjecture that non-escape persists when we average over
\textit{all} primitive solutions other than $(\pm1,0)$, \textit{without}
a height restriction.

\begin{mainconjecture}[Non-escape without a height restriction; Conjecture \ref{conj:non-escape-unrestricted}]
\label{conj:non-escape-unrestricted} 
\noindent
Let $i\in\{0,1\}$ and put
\[
\mathcal P=\mathbb Z_{\mathrm{prim}}^2
\setminus\{(1,0),(-1,0)\}.\]
Define
\[
J^{(i)}(X)=\sum_{P\in\mathcal P}N_P^{(i)}(X),
\qquad
J_T^{(i)}(X)=\sum_{P\in\mathcal P}N_{P,T}^{(i)}(X).
\]
Then
\[
\lim_{T\to\infty}\limsup_{X\to\infty}
\frac{J_T^{(i)}(X)}{J^{(i)}(X)}=0.
\]
\end{mainconjecture}
 For the counting arguments in Section~\ref{sec:mass-thue}, we describe reduction through the rank-two lattice $R(f)/\mathbb Z$ associated with the Delone--Faddeev correspondence \cite{Delone2009TheTO}. Section~\ref{sec:df-reduction} defines its parameter $\tau_f$ in the upper half-plane $\mathfrak H=\{z\in\mathbb C:\Im z>0\}$. We call a form reduced when this parameter belongs to the standard modular fundamental domain
$\mathfrak F=\{z\in\mathfrak H:|z|\geq1,\ |\Re z|\leq1/2\}$.
Section~\ref{sec:mass-thue} fixes the group representatives and base forms used in the counts. As before, $\mathcal F$ denotes Gauss's usual fundamental domain
for $\mathrm{GL}_2(\mathbb Z)\backslash\mathrm{GL}_2(\mathbb R)$
in $\mathrm{GL}_2(\mathbb R)$ and $\mathcal Fv^{(i)}$ for the resulting fundamental multiset of binary cubic forms; this distinguishes them from the domain $\mathfrak F$ in the upper half-plane.

Non-escape of mass does \textit{not} necessarily  determine how binary cubic forms are distributed within the fundamental domain.
For each fixed primitive monogenizer $P\neq(\pm1,0)$,
and more generally when the monogenizer has bounded height,
we show that these parameters are not equidistributed
with respect to hyperbolic measure on
$\SL_2(\mathbb Z)\backslash\mathfrak H$.
Whether equidistribution holds without a height restriction remains open. See
Remark~\ref{rem:non-escape-without-equidistribution}
for further details.

An integral Weierstrass model $E:y^2=x^3+Ax+B$ determines the monic binary cubic form $F_E(X,Y)=X^3+AXY^2+BY^3$, which satisfies $F_E(1,0)=1$. For a binary cubic form $f(X,Y)=aX^3+bX^2Y+cXY^2+dY^3$, its discriminant is
$\Disc(f)=b^2c^2-4ac^3-4b^3d-27a^2d^2+18abcd$.
The discriminant of the Weierstrass model therefore satisfies $\Delta(E)=-16(4A^3+27B^2)=16\Disc(F_E)$.

As before, we use the twisted action $(g\cdot f)(X,Y)=\det(g)^{-1}f((X,Y)g)$, with vectors written as rows. If $U\in\SL_2(\mathbb Z)$ carries $F_E$ to a reduced form $f_E=U\cdot F_E$, then $P_E=(1,0)U^{-1}$ satisfies $f_E(P_E)=1$. Thus reduction transports the solution $(1,0)$ to a distinguished primitive integral solution of the reduced form. This leads us to study reduced irreducible integral binary cubic forms with a fixed primitive solution $P$ to the Thue equation $f(P)=1$.

Our numerical experiments concern families satisfying $|A|\asymp H^2$, $|B|\asymp H^3$, and $|\Delta(E)|\asymp H^m$, where $1<m<6$. Here $H$ describes the sizes of the coefficients: the naive height $\max\{|A|^3,|B|^2\}$ is comparable to $H^6$. The notation $\asymp$ means that the quantities are bounded above and below by positive constant multiples of one another, independently of the parameter tending to infinity. Without substantial cancellation between $4A^3$ and $27B^2$, the discriminant also has order $H^6$. The experiments investigate families in which this cancellation produces a smaller discriminant.

The value $m=4$ arises naturally in the construction of these families. For fixed $B$ with $|B|\asymp H^3$, the discriminant restriction gives an admissible interval for $A^3$ of length of order $H^m$. Near $|A|\asymp H^2$, however, consecutive integral cubes are separated by order $H^4$. Thus, when $m<4$, the admissible interval is shorter than the spacing between consecutive cubes. Appendix~\ref{app:thin-families} explains this calculation and a related interpretation in terms of the roots of $x^3+Ax+B$.

In the sampled families, Cremona's reduction algorithm \cite{cremona1999reduction} frequently gives $P_E=(\pm1,0)$ when $m>4$, whereas for $m<4$ the distinguished solution is typically nontrivial but comparatively small. The computations also suggest a connection between the size of this solution and whether the reduced forms move into the cusp. These observations motivate studying families in which $P$ is fixed. 

We also transfer the counting results to normalized integral Weierstrass models. A reduced form $f$ together with a solution $P$ determines, after sending $P$ to $(1,0)$ and normalizing the $X^2Y$-coefficient modulo $3$, a model $E_{f,P}:y^2=x^3+ux^2+Ax+B$, where $u\in\{0,1,2\}$. Section~\ref{Correspondence} makes this correspondence precise and proves the fixed-$P$ bijection. Since $\Delta(E_{f,P})=16\Disc(f)$, the discriminant intervals correspond exactly. The bounds of  Delone, Evertse and Nagell \cite{Evertse1983,delaunay1930darstellung,nagell1928darstellung} of at most twelve solutions to an irreducible cubic Thue equation, controls multiplicities when the marking is forgotten and allows the averaged non-escape result to pass to distinct normalized models.

Finally, Section~\ref{sec:questions} returns to the thin families that motivated the paper. We ask whether the value $m=4$ separates escape from non-escape when $|\Delta(E)|\asymp H^m$, and how the discriminant relates to the size of the distinguished Thue solution.

\subsection{Paper outline}

Section~\ref{sec:df-reduction} recalls the Delone--Faddeev correspondence and describes reduction through the associated rank-two lattice. Section~\ref{sec:mass-thue} defines the counting functions, proves the three main theorems, and states Conjecture~\ref{conj:non-escape-unrestricted} on the unrestricted case. Section~\ref{Correspondence} establishes the correspondence with normalized integral Weierstrass models and transfers the escape and non-escape statements. Section~\ref{sec:questions} presents the questions motivated by the numerical experiments. Appendix~\ref{app:thin-families} contains the construction of the thin families, the root calculations, and the Cremona-reduction computations.

\section*{Acknowledgements}
I am grateful to my advisor, Arul Shankar, for suggesting this problem and for our constructive discussions. I also thank my academic siblings, Fatemehzahra Janbazi, Alexander Slamen and Yun-Chi Tang for their helpful comments on this paper.
\section{The Delone--Faddeev correspondence and geometric reduction}
\label{sec:df-reduction}

The Delone--Faddeev correspondence associates a cubic ring $R(f)$ to an integral binary cubic form $f$. We recall the properties needed for the counting arguments and then describe reduction using the rank-two quotient $R(f)/\mathbb Z$.

A \emph{cubic ring} is a commutative ring with identity that is free of rank $3$ as a $\mathbb Z$-module. We use the twisted action
$(\gamma\cdot f)(x,y)=\det(\gamma)^{-1}f((x,y)\gamma)$
of $\GL_2(\mathbb Z)$ on integral binary cubic forms.

\begin{theorem}[Delone--Faddeev {\cite{Delone2009TheTO}}]
\label{thm:delone-faddeev}
There is a canonical bijection between the $\GL_2(\mathbb Z)$-equivalence classes of integral binary cubic forms and the isomorphism classes of cubic rings.
\end{theorem}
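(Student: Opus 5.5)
The plan is the standard one: construct a ring from a form, construct a form from a ring, and check that the two constructions are mutually inverse and compatible with the twisted $\GL_2(\mathbb Z)$-action.

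\emph{From forms to rings.} Given $f(x,y)=ax^3+bx^2y+cxy^2+dy^3$ with integral coefficients, let $R(f)=\mathbb Z\langle 1,\omega,\theta\rangle$ be the free $\mathbb Z$-module of rank three. Define the multiplication on the basis by
\[
\omega\theta=-ad,\qquad \omega^2=-ac+b\omega-a\theta,\qquad \theta^2=-bd+d\omega-c\theta,
\]
and extend it bilinearly, with $1$ acting as the identity.
\begin{enumerate}
\item Verify commutativity and associativity. The only nontrivial identities are $(\omega\omega)\theta=\omega(\omega\theta)$ and $(\omega\theta)\theta=\omega(\theta\theta)$, each a direct polynomial identity in $a,b,c,d$.
\item Note that this basis is normalized, meaning $\omega\theta\in\mathbb Z$.
\end{enumerate}

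\emph{From rings to forms.} Conversely, let $R$ be any cubic ring and pick a basis $1,\omega,\theta$. Shift $\omega\mapsto\omega+k$ and $\theta\mapsto\theta+l$ with $k,l\in\mathbb Z$ to kill the $\omega$- and $\theta$-coefficients of $\omega\theta$, which makes the basis normalized. Such $k,l$ exist and are unique. Then read off the structure constants: the general normalized multiplication table is forced to have exactly the shape above for unique integers $a,b,c,d$. This uniqueness comes from the associativity constraints. Equivalently, $f$ can be recovered intrinsically as the index form
\[
1\wedge x\omega+y\theta\wedge(x\omega+y\theta)^2=f(x,y)\,(1\wedge\omega\wedge\theta).
\]
This gives a map from based cubic rings to forms that inverts the first construction.

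\emph{Equivariance.} A change of basis of $R/\mathbb Z$ is given by $\gamma\in\GL_2(\mathbb Z)$, followed by renormalization. The index form description shows directly that the associated form transforms by
\[
f\mapsto \det(\gamma)^{-1}f((x,y)\gamma),
\]
which is exactly the twisted action. The factor $\det(\gamma)^{-1}$ comes from the change in $1\wedge\omega\wedge\theta$. Since every basis of $R/\mathbb Z$ lifts to a unique normalized basis of $R$, we get the following:
\begin{itemize}
\item isomorphism classes of cubic rings correspond to $\GL_2(\mathbb Z)$-orbits of forms;
\item isomorphic rings give equivalent forms, and equivalent forms give isomorphic rings.
\end{itemize}
This establishes the canonical bijection in Theorem~\ref{thm:delone-faddeev}.

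The main obstacle is the structural lemma in the ring-to-form direction: showing that associativity forces every normalized multiplication table into the four-parameter shape. I would handle it by writing
\[
\omega^2=\ell+b\omega+m\theta,\qquad \theta^2=n+p\omega+q\theta,\qquad \omega\theta=r,
\]
and then imposing the two associativity relations. These should force $\ell=-ac$ with $a:=-m$, $n=-bd$ with $d:=p$, $c:=-q$, and $r=-ad$. Checking that these relations are exactly solvable in this form is the real content of the proof; everything else is bookkeeping.
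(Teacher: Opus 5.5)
The paper gives no proof of this theorem; it quotes Delone--Faddeev in the Gan--Gross--Savin formulation. Your argument (normalized basis, a four-parameter multiplication table forced by associativity, and the index form for equivariance) is the standard proof from that literature, and it works.

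The step you flag as the main obstacle is a two-line computation. Write $\omega^2=\ell+b\omega+m\theta$, $\theta^2=n+p\omega+q\theta$, $\omega\theta=r$. The identity $(\omega\omega)\theta=\omega(\omega\theta)$ gives $r=mp$, $\ell=-mq$ and $m(bp+n)=0$. The identity $(\omega\theta)\theta=\omega(\theta\theta)$ gives $n=-bp$, $r=mp$ and $p\ell+qr=0$. The leftover equations then hold automatically. So there is no solvability issue: a normalized commutative table is associative exactly when it has your four-parameter shape.

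There is, however, a sign inconsistency you should fix. With your table, $\xi=x\omega+y\theta$ satisfies $\xi^2\equiv(bx^2+dy^2)\omega-(ax^2+cy^2)\theta \pmod{\mathbb Z}$. Hence $1\wedge\xi\wedge\xi^2=-f(x,y)\,(1\wedge\omega\wedge\theta)$, not $f(x,y)\,(1\wedge\omega\wedge\theta)$; your table is the standard one for $-f$. So the index-form map does not invert your form-to-ring construction on the nose: it returns $-f$, and your ``equivalently'' is false as written.

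This does not damage the theorem. Under the twisted action, $-f=(-I)\cdot f$, and negation commutes with the $\GL_2(\mathbb Z)$-action. So the read-off map is still equivariant, and the induced bijection on classes is unchanged. The clean fix is to use the standard table $\omega\theta=-ad$, $\omega^2=-ac-b\omega+a\theta$, $\theta^2=-bd-d\omega+c\theta$. Its index form is exactly $f$, and both descriptions of the inverse then agree.

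Finally, ``pick a basis $1,\omega,\theta$'' tacitly uses that $\mathbb Z\cdot 1$ is saturated in $R$. This needs one line: if a non-integral rational $q$ lay in $R$, then $\mathbb Z[q]\subseteq R$ would not be finitely generated.
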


We use the refined formulation of this correspondence due to Gan--Gross--Savin \cite{10.1215/S0012-7094-02-11514-2}. It preserves discriminants: the discriminant of $R(f)$ equals $\Disc(f)$. We will also use the following two properties.

\begin{proposition}[Bhargava--Shankar--Tsimerman {\cite{bhargava2013davenport}}]
\label{prop:domain-irreducible}
The cubic ring $R(f)$ is an integral domain if and only if $f$ is irreducible over $\mathbb Q$.
\end{proposition}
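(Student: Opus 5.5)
We prove the proposition by using the explicit multiplication table of $R(f)$ and tying zero divisors to rational roots of $f$. For $f(x,y)=ax^3+bx^2y+cxy^2+dy^3$, the ring $R(f)$ has a $\mathbb Z$-basis $1,\omega,\theta$. We use the normalized table
\[
\omega\theta=-ad,\qquad \omega^2=-ac+b\omega-a\theta,\qquad \theta^2=-bd+d\omega-c\theta .
\]
Since $R(f)$ is free of rank three over $\mathbb Z$, it is torsion free. Hence $R(f)$ is an integral domain if and only if $R(f)\otimes\mathbb Q$ is a field. Both conditions are invariant under the twisted $\GL_2(\mathbb Z)$-action on $f$, because Theorem~\ref{thm:delone-faddeev} is a bijection on equivalence classes. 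So we may move $f$ within its class whenever convenient.

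\emph{Degenerate case $\Disc(f)=0$.} If $f\neq0$ has a repeated linear factor, that factor is defined over $\mathbb Q$, so $f$ is reducible. On the ring side, $R(f)\otimes\mathbb Q$ is a cubic $\mathbb Q$-algebra with vanishing discriminant, so it is non-reduced. It therefore has a nonzero nilpotent element, and clearing denominators gives a nilpotent element in $R(f)$. So $R(f)$ is not a domain, and the equivalence holds. If $f=0$, then $R(f)=\mathbb Z[\omega,\theta]/(\omega,\theta)^2$, which is also not a domain. From now on assume $\Disc(f)\neq0$.

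\emph{Reducible $\Rightarrow$ not a domain.} Suppose $f$ has a rational linear factor. After a $\GL_2(\mathbb Z)$ change of variables we may take the root to be $[1:0]$, i.e.\ $a=0$. The table then gives $\omega\theta=0$. Both $\omega$ and $\theta$ are nonzero basis vectors, so $R(f)$ has zero divisors.

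\emph{Irreducible $\Rightarrow$ domain.} Now let $f$ be irreducible, so $a\neq0$ and $f(x,1)$ has a root $\xi$ generating a cubic field $K=\mathbb Q(\xi)$. Define a map $R(f)\to K$ by
\[
\omega\mapsto -a\xi,\qquad \theta\mapsto d/\xi .
\]
This is the classical embedding, up to the sign and normalization conventions of the table.

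We check that this map respects the multiplication table. The relation $\omega\theta=-ad$ holds immediately. The relations for $\omega^2$ and $\theta^2$ reduce to the identity $a\xi^3+b\xi^2+c\xi+d=0$. The images of $1,\omega,\theta$ are $\mathbb Q$-linearly independent, since $1,\xi,\xi^{-1}$ are independent in a cubic field. So the map is an injective ring homomorphism, and $R(f)$ is isomorphic to a subring of the field $K$. Hence $R(f)$ is a domain.

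The main obstacle is this last verification. The signs in the table depend on the normalization used in \cite{10.1215/S0012-7094-02-11514-2}, so the choice of images for $\omega$ and $\theta$ may need adjusting, for example $\omega\mapsto a\xi$. We would fix the embedding by solving $\omega^2=-ac+b\omega-a\theta$ with $\omega=\lambda\xi$, and then check the remaining two relations using the cubic equation for $\xi$.
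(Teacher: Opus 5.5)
Your proof is correct. The paper gives no argument for this proposition; it simply quotes it from Bhargava--Shankar--Tsimerman alongside the other properties of the Delone--Faddeev correspondence. So your proposal is a self-contained substitute for that citation rather than a variant of anything in the text, and it is the classical argument behind it. A rational root can be moved to $[1:0]$, which forces $a=0$ and hence $\omega\theta=-ad=0$. For irreducible $f$ the ring embeds in $\mathbb Q(\xi)$, with image the order $\mathbb Z\langle 1,a\xi,a\xi^2+b\xi\rangle$, because $d/\xi=-(a\xi^2+b\xi+c)$. The citation buys brevity. Your version buys an explicit model of $R(f)$, which would also let one compute the vectors $z_\omega,z_\theta$ (or $u,v$) in the definition of $\tau_f$ directly from the real and complex roots of $f(x,1)$.

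The step you flag as the main obstacle is not one. With the table exactly as you wrote it, the images $\omega\mapsto-a\xi$, $\theta\mapsto d/\xi$ need no sign adjustment. Dividing $a\xi^3+b\xi^2+c\xi+d=0$ by $\xi$ and by $\xi^2$ gives, for these images,
\[
-ac+b\omega-a\theta=-a\bigl(c+b\xi+d\xi^{-1}\bigr)=a^2\xi^2=\omega^2,
\qquad
-bd+d\omega-c\theta=-d\bigl(b+a\xi+c\xi^{-1}\bigr)=d^2\xi^{-2}=\theta^2 .
\]

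Two small tidying remarks. First, linear independence of $1,-a\xi,d/\xi$ uses $d\neq0$ as well as $a\neq0$. Both follow from irreducibility, since $a=0$ or $d=0$ would make $y$ or $x$ a factor of $f$. Second, your separate treatment of $\Disc(f)=0$ can be dropped. The reducible direction never uses $\Disc(f)\neq0$ and already covers $f=0$, while an irreducible cubic over $\mathbb Q$ is separable and so has nonzero discriminant. Likewise, the preliminary equivalence between $R(f)$ being a domain and $R(f)\otimes\mathbb Q$ being a field is never actually invoked.
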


\begin{proposition}[Bhargava--Shankar--Tsimerman {\cite{bhargava2013davenport}}]
\label{prop:automorphism-stabilizer}
For every integral binary cubic form $f$, there is a natural isomorphism
$\Aut(R(f))\simeq\Stab_{\GL_2(\mathbb Z)}(f)$.
\end{proposition}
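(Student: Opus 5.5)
The plan is to read off both directions of the isomorphism from the explicit, basis-level form of the Delone--Faddeev correspondence in the Gan--Gross--Savin normalization. For $f=(a,b,c,d)$, the ring $R(f)$ has a $\mathbb Z$-basis $1,\omega,\theta$ with multiplication table
\[
\omega\theta=-ad,\qquad \omega^2=-ac+b\omega-a\theta,\qquad \theta^2=-bd+d\omega-c\theta.
\]
A basis $\langle 1,\omega,\theta\rangle$ of a cubic ring is called \emph{normal} if $\omega\theta\in\mathbb Z$. The two facts I will use, and reprove if needed, are these. First, every basis $\bar\omega,\bar\theta$ of the rank-two lattice $R/\mathbb Z$ has a unique normal lift: replacing $\omega,\theta$ by $\omega+m,\theta+n$ changes the non-integral part of $\omega\theta$ linearly in $(m,n)$, and this pins down $m,n$. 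Second, a normal basis has a multiplication table of the above shape for a unique form $f$.

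Intrinsically, $f$ is the cubic map $R/\mathbb Z\to\wedge^3 R\cong\mathbb Z$ given by $\xi\mapsto 1\wedge\xi\wedge\xi^2$, written in the coordinates $\bar\omega,\bar\theta$. If the basis of $R/\mathbb Z$ is changed by $\gamma\in\GL_2(\mathbb Z)$, the generator $1\wedge\omega\wedge\theta$ is multiplied by $\det\gamma$. Hence the new normal basis produces exactly $\gamma\cdot f=\det(\gamma)^{-1}f((x,y)\gamma)$. This is the $\GL_2(\mathbb Z)$-equivariance behind Theorem~\ref{thm:delone-faddeev}, and the first step is to verify this twist carefully.

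\textbf{The map.} Let $\sigma\in\Aut(R(f))$. Then $\sigma$ fixes $1$, hence preserves $\mathbb Z\subset R$. It therefore induces a $\mathbb Z$-linear automorphism of $R/\mathbb Z$, whose matrix in the basis $\bar\omega,\bar\theta$ I call $\gamma_\sigma\in\GL_2(\mathbb Z)$. Since $\sigma$ is a ring map, it carries the normal basis $\langle1,\omega,\theta\rangle$ to a normal basis $\langle1,\sigma\omega,\sigma\theta\rangle$ with the identical multiplication table. By the equivariance above, that table is the one attached to $\gamma_\sigma\cdot f$. Uniqueness of the form attached to a normal basis then gives $\gamma_\sigma\cdot f=f$, so $\sigma\mapsto\gamma_\sigma$ lands in $\Stab_{\GL_2(\mathbb Z)}(f)$. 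It is clearly a group homomorphism, possibly up to the inverse/transpose convention, which is fixed by writing vectors as rows as in the paper.

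\textbf{Injectivity.} If $\gamma_\sigma=1$, then $\sigma\omega\equiv\omega$ and $\sigma\theta\equiv\theta$ modulo $\mathbb Z$. Both $\langle1,\omega,\theta\rangle$ and $\langle1,\sigma\omega,\sigma\theta\rangle$ are normal lifts of the same basis of $R/\mathbb Z$, so uniqueness of normal lifts forces $\sigma\omega=\omega$ and $\sigma\theta=\theta$, i.e.\ $\sigma=\mathrm{id}$.

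\textbf{Surjectivity.} Given $\gamma\in\Stab(f)$, take the basis of $R/\mathbb Z$ obtained by applying $\gamma$ to $\bar\omega,\bar\theta$, and let $\omega',\theta'$ be its normal lift. The multiplication table of $\langle1,\omega',\theta'\rangle$ is the one attached to $\gamma\cdot f=f$, which is the same as that of $\langle1,\omega,\theta\rangle$. So the $\mathbb Z$-linear map fixing $1$ and sending $\omega\mapsto\omega'$, $\theta\mapsto\theta'$ is multiplicative on basis elements, hence a ring automorphism $\sigma$ with $\gamma_\sigma=\gamma$.

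Naturality follows because everything is defined intrinsically from $R$ and the form-valued map on $R/\mathbb Z$. An isomorphism $R(f)\cong R(g)$ coming from $g=h\cdot f$ therefore intertwines the two isomorphisms by conjugation by $h$.

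\textbf{Main obstacle.} I expect the delicate point to be the first step: checking that a change of basis of $R/\mathbb Z$ by $\gamma$ yields precisely the \emph{twisted} action with the factor $\det(\gamma)^{-1}$, rather than the untwisted one. Otherwise elements with $\det\gamma=-1$ would be misassigned. I would handle this by verifying the $\wedge^3$ description on the generators $\begin{pmatrix}0&1\\1&0\end{pmatrix}$, $\begin{pmatrix}1&1\\0&1\end{pmatrix}$ and $\mathrm{diag}(-1,1)$ directly against the multiplication table. The uniqueness of normal lifts is a routine computation in comparison.
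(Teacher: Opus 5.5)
Your argument is correct. The paper gives no proof of this proposition and simply cites Bhargava--Shankar--Tsimerman, where it is read off from the $\GL_2(\mathbb Z)$-equivariant Delone--Faddeev/Gan--Gross--Savin correspondence. Your argument (normal lifts of bases of $R/\mathbb Z$, uniqueness of the form attached to a normal basis, and the $\wedge^3$ description of that form) is the standard reasoning behind that citation, so what you gain is a self-contained proof of something the paper treats as a black box.

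Two small remarks.

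\emph{The twist.} You do not need to check the twist on generators. First compute once from the multiplication table that $1\wedge\xi\wedge\xi^2=-f(x,y)\,(1\wedge\omega\wedge\theta)$ for $\xi=x\bar\omega+y\bar\theta$. This holds for every normal basis, because associativity forces every normal basis into the displayed shape. Multilinearity then gives $1\wedge\omega'\wedge\theta'=\det(\gamma)\,(1\wedge\omega\wedge\theta)$ for all $\gamma$ at once. So the twisted action $f\mapsto\det(\gamma)^{-1}f((x,y)\gamma)$ appears directly when the new basis is $\gamma$ applied to the column $(\bar\omega,\bar\theta)^{T}$.

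\emph{The group law.} With this row convention, $\gamma_\sigma\cdot f=f$ holds exactly, but one has $\gamma_{\sigma\tau}=\gamma_\tau\gamma_\sigma$. Hence the group isomorphism is $\sigma\mapsto\gamma_\sigma^{-1}$. Passing to inverses is harmless, but a stray transpose would not be, since $\Stab_{\GL_2(\mathbb Z)}(f)$ is not closed under transposition in general. For example, the order-$3$ stabilizer of a cyclic cubic form does not contain the transposes of its generators.
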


\subsection{Reduction through the associated lattice}

We describe reduction by associating a point of the upper half-plane to the based rank-two lattice $R(f)/\mathbb Z$. The construction specifies a realization of this lattice in a Euclidean plane. 

Write $\mathfrak H=\{z\in\mathbb C:\Im z>0\}$. The Möbius action of $\SL_2(\mathbb R)$ on $\mathfrak H$ is
$\gamma\cdot z=(az+b)/(cz+d)$
for $\gamma=\left(\begin{smallmatrix}a&b\\c&d\end{smallmatrix}\right)$; see \cite[Chapter~VII]{serre2012course}. We denote the standard modular fundamental domain by
\[
\mathfrak F
=
\left\{
z\in\mathfrak H:
|z|\geq1,\quad
-\frac12\leq\Re z\leq\frac12
\right\}.
\]

\begin{defn}[Shape parameter]
\label{def:shape-parameter}
Let $f\in\mathbb Z[x,y]$ be an irreducible binary cubic form, and let $(R(f),\bar\omega,\bar\theta)$ be the based cubic ring associated to $f$ by the Delone--Faddeev correspondence. Thus,
$R(f)=\langle1,\omega,\theta\rangle_{\mathbb Z}$
and
$R(f)/\mathbb Z=\langle\bar\omega,\bar\theta\rangle_{\mathbb Z}$.
We define a parameter $\tau_f\in\mathfrak H$ as follows.

\smallskip
\noindent\textup{Negative discriminant.}
Suppose that $\Disc(f)<0$. Then
$R(f)\otimes_{\mathbb Z}\mathbb R\simeq\mathbb R\oplus\mathbb C$,
and quotienting by the real line generated by the identity gives
\[
(R(f)/\mathbb Z)\otimes_{\mathbb Z}\mathbb R
\simeq
(\mathbb R\oplus\mathbb C)/\mathbb R(1,1)
\simeq
\mathbb C.
\]
Let $z_\omega,z_\theta\in\mathbb C$ be the images of $\bar\omega,\bar\theta$ under this identification. These vectors are linearly independent over $\mathbb R$, so $z_\theta/z_\omega$ is nonreal. The two choices of complex embedding give conjugate ratios. Choose the embedding for which the ratio has positive imaginary part, and set
$\tau_f=z_\theta/z_\omega\in\mathfrak H$.

\smallskip
\noindent\textup{Positive discriminant.}
Suppose that $\Disc(f)>0$. Then
$R(f)\otimes_{\mathbb Z}\mathbb R\simeq\mathbb R^3$,
and the trace pairing $\langle x,y\rangle=\Tr(xy)$ is positive definite. For $x\in R(f)\otimes_{\mathbb Z}\mathbb R$, write
$x^0=x-\frac{\Tr(x)}{3}\cdot1$
for its trace-zero projection. Set $u=\omega^0$ and $v=\theta^0$. These depend only on $\bar\omega$ and $\bar\theta$, since $(x+n)^0=x^0$ for $n\in\mathbb Z$.

Define
$A_0=\Tr(u^2)$,
$B_0=\Tr(uv)$,
and
$C_0=\Tr(v^2)$.
The positive definiteness of the trace pairing and the linear independence of $u,v$ give $A_0>0$ and $A_0C_0-B_0^2>0$. Set
\[
\tau_f
=
\frac{B_0+i\sqrt{A_0C_0-B_0^2}}{A_0}
\in\mathfrak H.
\]
\end{defn}

The parameter $\tau_f$ depends on the distinguished oriented basis. Its class in $\SL_2(\mathbb Z)\backslash\mathfrak H$ records the shape of the lattice with the realization specified above. The motivation for the previous definition came from the work of Bhargava and Harron in \cite{bhargava2016equidistribution}.

\begin{defn}[Reduced form]
\label{def:lattice-reduction}
\label{def:new-reduction}
An irreducible binary cubic form $f$ is called \emph{reduced} if $\tau_f\in\mathfrak F$.
\end{defn}

With the basis-change convention used for the correspondence, changing the oriented basis by
$\gamma=\left(\begin{smallmatrix}a&b\\c&d\end{smallmatrix}\right)\in\SL_2(\mathbb Z)$
transforms the parameter by
\[
\tau_f\longmapsto
\gamma\cdot\tau_f
=
\frac{a\tau_f+b}{c\tau_f+d}.
\]
Thus changing the oriented integral basis does not change the modular class $[\tau_f]$. Reduction chooses a representative of this class in $\mathfrak F$, with the usual identifications on its boundary.

For the application to Weierstrass models, take
$F_E(X,Y)=X^3+AXY^2+BY^3$
and assume that $F_E$ is irreducible. Choose $U\in\SL_2(\mathbb Z)$ such that $U\cdot\tau_{F_E}\in\mathfrak F$, and set $f_E=U\cdot F_E$. By equivariance,
$\tau_{f_E}=U\cdot\tau_{F_E}\in\mathfrak F$,
so $f_E$ is reduced. The distinguished solution is
$P_E=(1,0)U^{-1}$,
and
$f_E(P_E)=F_E(1,0)=1$.
The matrix $U$ is obtained by composing the translations and inversions used to move $\tau_{F_E}$ into $\mathfrak F$.

\begin{remark}[Relation to covariant reduction]
Cremona's algorithm associates an upper-half-plane point to a binary cubic form using its Hessian or Julia covariant and then reduces that point to $\mathfrak F$ \cite{cremona1999reduction}. The formulation above also uses reduction in the upper half-plane, but describes the parameter through the associated lattice.
\end{remark}

\begin{remark}[The fundamental multiset used for counting]
Let $V_{\mathbb R}$ denote the space of real binary cubic forms, and write
$V_{\mathbb R}^{(0)}=\{f:\Disc(f)>0\}$
and
$V_{\mathbb R}^{(1)}=\{f:\Disc(f)<0\}$.
For each $i\in\{0,1\}$, choose a base form $v^{(i)}\in V_{\mathbb R}^{(i)}$ with $\tau_{v^{(i)}}=\iota$, where $\iota^2=-1$ and $\Im\iota>0$.

We use $\mathcal F$ for the group representatives specified in Section~\ref{sec:mass-thue}, reserving $\mathfrak F$ for the domain in the upper half-plane. Equivariance gives
$\tau_{g\cdot v^{(i)}}=g\cdot\iota$, see \cite[Chapter~VII]{serre2012course} for further details.
The representatives are chosen so that their associated upper-half-plane points lie in $\mathfrak F$. Consequently, forms in $\mathcal Fv^{(i)}$ are reduced in the sense of Definition~\ref{def:lattice-reduction}.

The region $\mathcal Fv^{(i)}$ is used as a fundamental multiset for the action of $\GL_2(\mathbb Z)$ on $V_{\mathbb R}^{(i)}$. The finite stabilizers account for its multiplicities; Proposition~\ref{prop:fundamental-multiplicity} specifies this relation. The subsequent counts use these fixed representatives and multiplicity conventions. In the next section~\ref{sec:mass-thue} we define the counting functions and study escape and non-escape of mass in the regions $\mathcal Fv^{(i)}$.
\end{remark} 

\section{The mass of binary cubic forms of bounded discriminant with a solution to the Thue equation}\label{sec:mass-thue}

In this section, we study the distribution in the fundamental domain $\mathcal{F}v$ of irreducible integral binary cubic forms admitting a prescribed integral solution to the Thue equation $f(x,y)=1$. After recalling the action of $\GL_2$ and fixing Iwasawa coordinates on the fundamental domain, we formulate escape and non-escape of mass in terms of the cusp parameter $t$. The condition $f(\alpha,\beta)=1$ cuts out an affine rank-three lattice in the four-dimensional space of binary cubic forms, so the required estimates reduce to lattice-point counts \cite{siegel2013lectures} in skewed regions of a hyperplane.

The behavior depends sharply on the prescribed solution. If $P=(\alpha,\beta)$ is primitive and $P\neq(\pm1,0)$, then $\beta\neq0$, and the covolume of the transformed lattice grows like $t^3$ in the cusp. This yields an upper bound of order $X^{3/4}T^{-3}$ for the cuspidal contribution and a lower bound of order $X^{3/4}$ for the full count, proving non-escape of mass. If $P=(\pm1,0)$, then the condition $f(P)=1$ is equivalent to fixing the leading coefficient. In this case the transformed lattice has covolume $t^{-3}$, and the mass is driven into the cusp. Bhargava's averaging method \cite{bhargava2010density} gives a lower bound $\gg X^{5/6}$ for the total count, while the contribution of forms  with $t\le T$ is only $O(X^{3/4}T^3)$; this proves escape of mass. Finally, we establish a uniform non-escape statement after averaging over primitive solutions with $Y\leq |\alpha|,|\beta|<2Y$ and $Y\leq X^{1/12-\delta}$.

\subsection{The representation and the fundamental domain}

Let
\[
V_{\mathbb R}
=
\{f(X,Y)=aX^3+bX^2Y+cXY^2+dY^3:a,b,c,d\in\mathbb R\}.
\]
We use the twisted action of $\GL_2(\mathbb R)$ given by
\[
(g\cdot f)(X,Y)
=
\frac{1}{\det(g)}f((X,Y)g).
\]
Under this action,
\[
\Disc(g\cdot f)=\det(g)^2\Disc(f).
\]
Thus the sign of the discriminant is preserved by $\GL_2(\mathbb R)$, while the discriminant itself is preserved by $\GL_2(\mathbb Z)$ \cite{olver1999classical}. We write
\[
V_{\mathbb R}^{(0)}=\{f\in V_{\mathbb R}:\Disc(f)>0\},
\qquad
V_{\mathbb R}^{(1)}=\{f\in V_{\mathbb R}:\Disc(f)<0\},
\]
and
\[
V_{\mathbb Z}^{(i)}=V_{\mathbb Z}\cap V_{\mathbb R}^{(i)},
\qquad i\in\{0,1\}.
\]

We use the Iwasawa coordinates
\[
n(u)=
\begin{pmatrix}
1&0\\
u&1
\end{pmatrix},
\qquad
a(t)=
\begin{pmatrix}
t^{-1}&0\\0&t
\end{pmatrix},
\qquad
k\in K:=\SO_2(\mathbb R),
\qquad
\lambda I_2\in\Lambda,
\]
where $u\in\mathbb R$, $t>0$, and $\lambda>0$.
It is well-known (see [\cite{knapp1996lie}, Theorem~6.46]) that the natural product map
$K_1\times A^+\times N\to\mathrm{GL}_2(\mathbb R)$ is an analytic
diffeomorphism. In fact, for any $g\in\mathrm{GL}_2(\mathbb R)$,
there exist unique $k\in K_1$, $a=a(t)\in A^+$, $n=n(u)\in N$,
and $\lambda\in\Lambda$ such that $g=kan\lambda$; this is the
Iwasawa decomposition of $\mathrm{GL}_2(\mathbb R)$.
We choose a standard set of representatives\footnote{$\mathcal F$ denotes Gauss's usual fundamental domain
for $\mathrm{GL}_2(\mathbb Z)\backslash\mathrm{GL}_2(\mathbb R)$
in $\mathrm{GL}_2(\mathbb R)$.}
\[
\mathcal F
=
\{n(u)a(t)k\lambda:u\in\nu(t),\ t\geq t_0,\ k\in K,\ \lambda>0\},
\qquad
t_0=\frac{\sqrt[4]{3}}{\sqrt2},
\]
for $\GL_2(\mathbb Z)\backslash\GL_2(\mathbb R)$, where $\nu(t)\subset[-\tfrac12,\tfrac12]$ is a union of at most two intervals and $\nu(t)=[-\tfrac12,\tfrac12]$ for $t\geq1$; see \cite[Chapter~7, Theorem~1]{serre2012course}. We normalize the Haar measure as
\[
dg=t^{-2}\,du\,d^{\times}t\,dk\,d^{\times}\lambda,
\]
with $dk$ of total mass $1$.

The scalar parameter $\lambda$ measures the size of the form.
For $g=n(u)a(t)k\lambda$, the discriminant transformation law gives
$|\Disc(g\cdot v)|=\lambda^4|\Disc(v)|$.
Consequently, restricting the discriminant to a fixed interval
$c_-X\leq|\Disc(g\cdot v)|<c_+X$, where $0<c_-<c_+$,
forces $\lambda\asymp X^{1/4}$. The implied constants are uniform
when $v$ ranges over a fixed compact set of forms with nonzero
discriminant.

For $i\in\{0,1\}$, let
\[
n_i=\#\Stab_{\GL_2(\mathbb R)}(v),
\qquad v\in V_{\mathbb R}^{(i)}.
\]
Under our indexing convention,
\[
n_0=6,
\qquad
n_1=2.
\]
These are the orders of $\Aut_{\mathbb R}(\mathbb R^3)$ and $\Aut_{\mathbb R}(\mathbb R\oplus\mathbb C)$ (see \cite{bhargava2013davenport} for more details), respectively.

We recall the asymptotic count of irreducible binary cubic forms without the Thue condition. Let $N(V_{\mathbb Z}^{(i)};X)$ denote the number of irreducible $\GL_2(\mathbb Z)$-orbits in $V_{\mathbb Z}^{(i)}$ having absolute discriminant less than $X$.

\begin{theorem}[Bhargava--Shankar--Tsimerman \cite{bhargava2013davenport}]\label{thm:bst-davenport}
One has
\[
N(V_{\mathbb Z}^{(0)};X)
=
\frac{\pi^2}{72}X+O(X^{5/6}),
\qquad
N(V_{\mathbb Z}^{(1)};X)
=
\frac{\pi^2}{24}X+O(X^{5/6}).
\]
\end{theorem}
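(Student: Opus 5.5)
The count rests on the Delone--Faddeev correspondence (Theorem~\ref{thm:delone-faddeev}) and on a Davenport-style geometry-of-numbers count over the fundamental multiset $\mathcal Fv^{(i)}$. The plan has four steps: convert orbits to a weighted lattice-point count, remove the reducible forms, compute the volume, and bound the error.

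\textbf{Step 1: reduce to a lattice-point count.} By Proposition~\ref{prop:automorphism-stabilizer}, each $\GL_2(\mathbb Z)$-orbit of an irreducible integral form is counted $n_i/\#\Stab_{\GL_2(\mathbb Z)}(f)$ times in $\mathcal Fv^{(i)}$. Nontrivial stabilizers correspond to cubic domains with nontrivial automorphisms (cyclic cubic rings), and there are only $O(X^{1/2+\varepsilon})$ of these. So up to that error,
\[
n_i\,N(V_{\mathbb Z}^{(i)};X)=\#\{\mathcal F_Xv^{(i)}\cap V_{\mathbb Z}^{\mathrm{irr}}\}.
\]
Rather than work with this region directly, I would average over a compact set $G_0\subset\GL_2(\mathbb R)$ of base points $v$, following Bhargava's averaging method \cite{bhargava2010density}. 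This rewrites the count as
\[
\frac{1}{\mathrm{Vol}(G_0)}\int_{g\in\mathcal F}\#\{gG_0v\cap V_{\mathbb Z}^{\mathrm{irr}}:|\Disc|<X\}\,dg .
\]

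\textbf{Step 2: remove reducible points in the cusp.} For $g=n(u)a(t)k\lambda$ with $t$ large, the coefficient $a$ of a point of $gG_0v$ has size $\ll\lambda t^{-3}$. Once $\lambda t^{-3}<1$, every integral point there has $a=0$. Such forms are reducible, since $y$ divides them, so they do not contribute. On the remaining region, where $a\neq 0$, I would show that reducible forms contribute $O(X^{3/4+\varepsilon})$. The argument: a reducible form with $a\neq0$ has a rational linear factor, and a sieve-type or direct fibre count bounds these.

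\textbf{Step 3: volume.} On the truncated region, Davenport's lemma gives
\[
\#=\mathrm{Vol}+O\bigl(\max\text{ of projections}\bigr).
\]
The main term is $\mathrm{Vol}(\mathcal F_Xv^{(i)})$. Using the Jacobian $|\Disc|$ change of variables between $dg$ and $dv$, and $\mathrm{Vol}(\GL_2(\mathbb Z)\backslash \GL_2(\mathbb R)^{\pm1})=\zeta(2)/2\cdot$(normalisation), this equals $\frac{\pi^2}{12}X$ up to the factor $n_i$. Dividing by $n_0=6$ and $n_1=2$ then yields the constants $\pi^2/72$ and $\pi^2/24$.

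\textbf{Step 4: error term (main obstacle).} The projections have size $O(\lambda^3 t^3)$. After integrating over $t\ll\lambda^{1/3}$ against $t^{-2}d^\times t$ and over $\lambda\ll X^{1/4}$, the largest projection, onto the $a=0$ hyperplane cut appropriately, contributes
\[
\int\lambda^3 t^{3}\,t^{-2}\,d^\times t\,d^\times\lambda\ll X^{3/4}\cdot X^{1/12}=X^{5/6}.
\]
I expect the careful bookkeeping of the cusp cutoff $t\asymp\lambda^{1/3}$ to be the delicate part. In particular I would have to check that no projection term exceeds $X^{5/6}$, and that the reducible count in the region where $a\neq0$ stays within this bound. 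Since the statement is the cited theorem of \cite{bhargava2013davenport}, I would ultimately defer the fine details to that source.
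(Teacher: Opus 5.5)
Your outline is correct and is essentially the argument of \cite{bhargava2013davenport}, which the paper simply cites without reproving: averaging over a compact set, cutting the cusp at $t\asymp\lambda^{1/3}$ (beyond which every integral point has $a=0$), applying Davenport's lemma with the dominant $a=0$ projection $\lambda^3t^3$, correcting for stabilizers, and using the volume $\frac{\pi^2}{12}X$ divided by $n_i$. The bookkeeping you leave implicit is two further terms of the same size $X^{5/6}$: the integral points with $a=0$ that still lie in the main body $t\ll\lambda^{1/3}$, which Davenport's lemma counts but which are reducible ($\ll\lambda^3t^3$ for each $g$), and the volume $\int\lambda^{4}\int_{t\gg\lambda^{1/3}}t^{-2}\,d^\times t\,d^\times\lambda\asymp X^{5/6}$ of the excised cusp, which must be added back to recover the full main term.
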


Davenport previously obtained the same main terms with error
$O(X^{15/16})$ in \cite{davenport1951principle}.
Theorem~\ref{thm:bst-davenport} counts
$\GL_2(\mathbb Z)$-orbits. For a prescribed point $P$, however,
the condition $f(P)=1$ is \textbf{not} preserved by the action on forms
alone. We therefore count forms satisfying this condition
inside a chosen fundamental multiset $\mathcal Fv^{(i)}$.

The following stabilizer argument explains the finite multiplicity
with which $\mathcal Fv^{(i)}$ represents each orbit.

\begin{proposition}\label{prop:fundamental-multiplicity}
Let $\GL_2(\mathbb R)$ act on a set $X$, let $\mathcal F$ be a set of representatives for $\GL_2(\mathbb Z)\backslash\GL_2(\mathbb R)$, and let $u,x\in X$ with $x\in\GL_2(\mathbb R)\cdot u$. Then there is a natural bijection
\[
\bigsqcup_{y\in\GL_2(\mathbb Z)\cdot x}
\{g\in\mathcal F:g\cdot u=y\}
\longrightarrow
\Stab_{\GL_2(\mathbb Z)}(x)\backslash
\Stab_{\GL_2(\mathbb R)}(x).
\]
\end{proposition}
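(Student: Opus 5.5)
We construct the map explicitly and then check injectivity and surjectivity using only the defining property of $\mathcal F$: every $h\in\GL_2(\mathbb R)$ can be written uniquely as $h=\gamma g$ with $\gamma\in\GL_2(\mathbb Z)$ and $g\in\mathcal F$.

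\emph{Setup.} Fix $h_0\in\GL_2(\mathbb R)$ with $h_0\cdot u=x$, which exists by hypothesis. Let $y\in\GL_2(\mathbb Z)\cdot x$ and $g\in\mathcal F$ with $g\cdot u=y$. Choose $\gamma\in\GL_2(\mathbb Z)$ with $\gamma\cdot y=x$. Then
\[
\gamma g h_0^{-1}\cdot x=\gamma g\cdot u=\gamma\cdot y=x,
\]
so $\gamma g h_0^{-1}\in\Stab_{\GL_2(\mathbb R)}(x)$.

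\emph{Definition of the map.} Send the pair $(y,g)$ to the coset
\[
\Stab_{\GL_2(\mathbb Z)}(x)\,\gamma g h_0^{-1}.
\]
The only freedom is in $\gamma$, which is determined up to left multiplication by $\Stab_{\GL_2(\mathbb Z)}(x)$. Hence the coset is well defined, and the map is well defined on the disjoint union.

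\emph{Injectivity.} Suppose $(y,g)$ and $(y',g')$ have the same image. Then $\gamma' g' h_0^{-1}=s\gamma g h_0^{-1}$ for some $s\in\Stab_{\GL_2(\mathbb Z)}(x)$. This gives $g'=(\gamma'^{-1}s\gamma)\,g$, where $\gamma'^{-1}s\gamma\in\GL_2(\mathbb Z)$. Since $g$ and $g'$ both lie in $\mathcal F$, which contains exactly one representative of each coset $\GL_2(\mathbb Z)h$, we get $g=g'$. Then $y=g\cdot u=g'\cdot u=y'$.

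\emph{Surjectivity.} Given $\sigma\in\Stab_{\GL_2(\mathbb R)}(x)$, write $\sigma h_0=\delta g$ with $\delta\in\GL_2(\mathbb Z)$ and $g\in\mathcal F$, and set $y=g\cdot u$. Then
\[
y=\delta^{-1}\sigma h_0\cdot u=\delta^{-1}\sigma\cdot x=\delta^{-1}\cdot x\in\GL_2(\mathbb Z)\cdot x.
\]
Taking $\gamma=\delta$ in the definition, the image of $(y,g)$ is the coset of $\delta g h_0^{-1}=\sigma$.

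\emph{Naturality.} If $h_0$ is replaced by $s_0h_0$ with $s_0\in\Stab_{\GL_2(\mathbb R)}(x)$, the map is composed with right multiplication by $s_0^{-1}$. So the bijection is canonical up to this identification.

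\emph{Main point of care.} No step is genuinely hard; the care needed is in the uniqueness of representatives. The injectivity step uses that $\mathcal F$ meets each left coset $\GL_2(\mathbb Z)h$ exactly once. Consequently, the multiplicity counted is over representatives and not over the possibly non-free action of $\GL_2(\mathbb Z)$ on $X$.
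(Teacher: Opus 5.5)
Your proof is correct and follows essentially the same route as the paper: the same map $(y,g)\mapsto \Stab_{\GL_2(\mathbb Z)}(x)\,\gamma g h_0^{-1}$ (your $\gamma$ is the paper's $\gamma_y^{-1}$), injectivity from the uniqueness of coset representatives in $\mathcal F$, and surjectivity by decomposing $\sigma h_0=\delta g$. You also check explicitly two points the paper only asserts: that the coset does not depend on the choice of $\gamma$, and that the surjectivity preimage maps to the coset of $\sigma$.
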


\begin{proof}
Choose $h \in \GL_2(\mathbb{R})$ such that $hu = x$. For each $y \in \GL_2(\mathbb{Z}) \cdot x$, fix $\gamma_y \in \GL_2(\mathbb{Z})$ with $\gamma_y x = y$. We define a map by sending $g \in \mathcal{F}$ with $gu = y$ to the coset $\operatorname{Stab}_{\GL_2(\mathbb{Z})}(x)\,\gamma_y^{-1} g h^{-1}$. If $gu = y$, then $(\gamma_y^{-1} g h^{-1})x = \gamma_y^{-1} g u = \gamma_y^{-1} y = x$, so the map indeed takes values in $\operatorname{Stab}_{\GL_2(\mathbb{R})}(x)$. It is straightforward to verify that the map is well defined, since it does not depend on the choice of $\gamma_y$.

To prove injectivity, suppose two elements $g_1,g_2 \in \mathcal{F}$, corresponding to $y,z \in \GL_2(\mathbb{Z}) \cdot x$, have the same image. Then there exists $\gamma \in \operatorname{Stab}_{\GL_2(\mathbb{Z})}(x)$ such that $\gamma_y^{-1} g_1 h^{-1} = \gamma\,\gamma_z^{-1} g_2 h^{-1}$, hence $\gamma_y^{-1} g_1 = \gamma\,\gamma_z^{-1} g_2$. It follows that $g_1 g_2^{-1} \in \GL_2(\mathbb{Z})$, and since $\mathcal{F}$ is a set of representatives for $\GL_2(\mathbb{Z}) \backslash \GL_2(\mathbb{R})$, we must have $g_1 = g_2$, and therefore $y=z$.

For surjectivity, let $\operatorname{Stab}_{\GL_2(\mathbb{Z})}(x)\,k$ be any coset with $k \in \operatorname{Stab}_{\GL_2(\mathbb{R})}(x)$. Since $\mathcal{F}$ is a set of representatives, we may write $k = \gamma^{-1} g h^{-1}$ for some $\gamma \in \GL_2(\mathbb{Z})$ and $g \in \mathcal{F}$. Then setting $y = gu$, we obtain $y = \gamma x$, so $y \in \GL_2(\mathbb{Z}) \cdot x$, and one checks that $\gamma_y^{-1} g h^{-1}$ lies in the coset $\operatorname{Stab}_{\GL_2(\mathbb{Z})}(x)\,k$. This shows surjectivity.

Hence the map is a bijection.
\end{proof}

For each $i$, fix a base form $v^{(i)}\in V_{\mathbb R}^{(i)}$
whose shape parameter is $\iota\in\mathfrak H$, where
$\iota^2=-1$. We use the fundamental multiset
$\mathcal Fv^{(i)}$ throughout this section, with the
multiplicities described in
Proposition~\ref{prop:fundamental-multiplicity}. Since a binary cubic form has odd degree, replacing $P$ by $-P$ changes the sign of $f(P)$; it is therefore enough to consider $f(P)=1$.

For $T>t_0$, define the cusp and its complement by
\[
\mathcal F_T
=
\{n(u)a(t)k\lambda\in\mathcal F:t>T\},
\qquad
\mathcal F_{\leq T}
=
\mathcal F\setminus\mathcal F_T.
\]
Let
\[
V_{\mathbb Z}^{(i),\mathrm{irr}}
=
\{f\in V_{\mathbb Z}^{(i)}:f\text{ is irreducible over }\mathbb Q\}.
\]
Fix constants $0<c_-<c_+$. Throughout the counting arguments
below, we use the discriminant interval
$c_-X\leq|\Disc(f)|<c_+X$.
The implied constants may depend on $c_-$ and $c_+$.
For a fixed primitive point $P\in\mathbb Z^2$, set
\[
N_P^{(i)}(X)
=
\#\{f\in\mathcal Fv^{(i)}\cap V_{\mathbb Z}^{(i),\mathrm{irr}}:
f(P)=1,\ c_-X\leq|\Disc(f)|<c_+X\},
\]
and
\[
N_{P,T}^{(i)}(X)
=
\#\{f\in\mathcal F_Tv^{(i)}\cap V_{\mathbb Z}^{(i),\mathrm{irr}}:
f(P)=1,\ c_-X\leq|\Disc(f)|<c_+X\}.
\]
We say that the family of irreducible binary cubic forms in $\mathcal{F}v^{(i)}$ satisfying $f(P)=1$ exhibits non-escape of mass if
\[
\lim_{T\to\infty}\limsup_{X\to\infty}
\frac{N_{P,T}^{(i)}(X)}{N_P^{(i)}(X)}=0,
\]
and (full) escape of mass if
\[
\lim_{T\to\infty}\liminf_{X\to\infty}
\frac{N_{P,T}^{(i)}(X)}{N_P^{(i)}(X)}
=1.
\]

More generally, let $\mathcal P_X\subseteq\mathbb Z_{\mathrm{prim}}^2$.
We say that the family of pairs $(f,P)$ with $P\in\mathcal P_X$
exhibits \emph{averaged non-escape of mass} if
\[
\lim_{T\to\infty}\limsup_{X\to\infty}
\frac{\sum_{P\in\mathcal P_X}N_{P,T}^{(i)}(X)}
     {\sum_{P\in\mathcal P_X}N_P^{(i)}(X)}
=0,
\]
provided the sums are finite and the denominator is positive
for all sufficiently large $X$.

For $P=(\alpha,\beta)$, define
\[
H_{\alpha,\beta}
=
\{(a,b,c,d)\in\mathbb R^4:
a\alpha^3+b\alpha^2\beta+c\alpha\beta^2+d\beta^3=1\},
\]
\[
\mathcal L_{\alpha,\beta}
=
H_{\alpha,\beta}\cap\mathbb Z^4,
\]
and
\[
\Lambda_{\alpha,\beta}
=
\{(a,b,c,d)\in\mathbb Z^4:
a\alpha^3+b\alpha^2\beta+c\alpha\beta^2+d\beta^3=0\}.
\]
Thus the condition $f(P)=1$ turns the counting problem into a lattice-point problem on the affine slice $\mathcal L_{\alpha,\beta}$.

\subsection{Lattice-point estimates}

Our principal counting tool is Schmidt's theorem. A bounded
measurable set is of narrow class $s$\cite{widmer2012lipschitz} if its intersection with
every line has at most $s$ connected components, and the same
property holds for every orthogonal projection of the set
onto a linear subspace.

\begin{theorem}[Schmidt,\cite{widmer2012lipschitz}]\label{thm:schmidt}
Let $\Lambda\subset\mathbb R^n$ be a lattice with successive minima $\lambda_1,\ldots,\lambda_n$. Let $S\subseteq B_0(R)$ be of narrow class $s$. Then
\[
\left|
\#(S\cap\Lambda)-\frac{\operatorname{Vol}(S)}{\det\Lambda}
\right|
\leq
c_1(n,s)
\max_{0\leq j<n}
\frac{R^j}{\lambda_1\cdots\lambda_j},
\]
where the term for $j=0$ is $1$, and $c_1(n,s)$ depends only
on the dimension $n$ and the narrow-class bound $s$.
\end{theorem}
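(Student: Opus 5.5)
The plan is to reduce to Davenport's lemma for the standard lattice $\mathbb Z^n$ by a linear change of variables adapted to a reduced basis of $\Lambda$. We then bound the volumes of the coordinate projections that appear in Davenport's error term using the successive minima.

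First, we choose a reduced basis $v_1,\dots,v_n$ of $\Lambda$, for instance Minkowski- or LLL-reduced. It satisfies $|v_i|\asymp_n\lambda_i$ and is quasi-orthogonal: $|v_1|\cdots|v_n|\ll_n\det\Lambda$, by Minkowski's second theorem. Let $A$ be the linear map with $A v_i=e_i$, so that $A\Lambda=\mathbb Z^n$, $\#(S\cap\Lambda)=\#(AS\cap\mathbb Z^n)$ and $\operatorname{Vol}(AS)=\operatorname{Vol}(S)/\det\Lambda$. The rows of $A$ form the dual basis $w_1,\dots,w_n$ with $w_i\cdot v_k=\delta_{ik}$. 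Using the Gram--Schmidt decomposition of the reduced basis together with quasi-orthogonality, we will show that $|w_i|\ll_n 1/|v_i|\asymp 1/\lambda_i$.

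Next, we apply Davenport's lemma to $AS$. It says that $|\#(AS\cap\mathbb Z^n)-\operatorname{Vol}(AS)|$ is bounded by a constant depending on $n$ and $s$ times the sum, over all coordinate subsets $I\subseteq\{1,\dots,n\}$ with $|I|=j<n$, of the $j$-dimensional volumes of the projections $\pi_I(AS)$; the term with $I=\emptyset$ contributes $1$. The projection $\pi_I\circ A$ is the map $x\mapsto(w_i\cdot x)_{i\in I}$. Since $S\subseteq B_0(R)$, its image lies in a box with side lengths $\ll R|w_i|\ll R/\lambda_i$ for $i\in I$. Hence
\[
\operatorname{Vol}_j(\pi_I(AS))\ll_n \frac{R^j}{\prod_{i\in I}\lambda_i}\leq\frac{R^j}{\lambda_1\cdots\lambda_j},
\]
because the $\lambda_i$ are increasing. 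Summing over the boundedly many subsets $I$ gives the stated bound with the maximum over $0\le j<n$.

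The main obstacle is justifying Davenport's lemma for $AS$. That lemma needs the narrow-class hypothesis for $AS$ and for all of its coordinate projections, not merely for $S$ and its orthogonal projections. Invertible linear maps send lines to lines and preserve connected components, so the line condition transfers to $AS$. A coordinate projection of $AS$ equals a linear image of a projection of $S$ along a subspace, which need not be orthogonal, so it does not directly inherit the hypothesis on orthogonal projections. I would first try to establish the statement under the slightly stronger hypothesis that all linear projections are of class $s$, which is the convention in Widmer's framework and is all that the semialgebraic regions used later require. Alternatively, one can note that a non-orthogonal projection of $S$ is an invertible linear image of an orthogonal projection onto a complementary subspace, with the same fibres. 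This shows the condition is invariant under the required maps, and the constant $c_1(n,s)$ is unaffected.
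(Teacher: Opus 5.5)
The paper gives no proof of this statement; it quotes it from Schmidt via Widmer and uses it as a black box. Your argument is correct and is essentially Schmidt's original proof. You take a reduced basis with $|v_i|\asymp_n\lambda_i$ and $\prod_i|v_i|\ll_n\det\Lambda$, send $\Lambda$ to $\mathbb Z^n$, apply Davenport's lemma, and bound the coordinate projections using
\[
|w_i|=\frac{1}{\operatorname{dist}\bigl(v_i,\operatorname{span}\{v_k:k\neq i\}\bigr)}\ll_n\frac{1}{\lambda_i},
\]
which follows from Hadamard's inequality and quasi-orthogonality.

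Drop the fallback of strengthening the hypothesis in your last paragraph, because your second observation already settles the transfer. If $L$ is a linear surjection with kernel $K$, then $L(S)=L|_{K^\perp}\bigl(\pi_{K^\perp}(S)\bigr)$ with $L|_{K^\perp}$ invertible. Invertible linear maps carry lines homeomorphically onto lines. Hence $AS$ and all its coordinate projections satisfy Davenport's hypotheses with the same $s$, and the theorem holds exactly as stated.
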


We pass from the affine lattice to a homogeneous lattice
using the following elementary observation.

\begin{lemma}[Affine lattices]\label{lem:affine-translate}
Let $q=(x_1,\ldots,x_n)\in\mathbb Z^n$ be primitive, and define
\[
\widetilde\Lambda_q=\{m\in\mathbb Z^n:m\cdot q=1\},
\qquad
\Lambda_q=\{m\in\mathbb Z^n:m\cdot q=0\}.
\]
Then there exists $q_0\in\widetilde\Lambda_q$ such that
\[
\widetilde\Lambda_q=\Lambda_q+q_0.
\]
Consequently, for every bounded set $\mathcal B\subset\mathbb R^n$,
\[
\#(\widetilde\Lambda_q\cap\mathcal B)
=
\#(\Lambda_q\cap(\mathcal B-q_0)).
\]
\end{lemma}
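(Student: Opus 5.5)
Since $q$ is primitive, $\gcd(x_1,\ldots,x_n)=1$. By B\'ezout's identity there are integers $m_1,\ldots,m_n$ with $\sum_j m_jx_j=1$. I will set $q_0=(m_1,\ldots,m_n)$; then $q_0\in\mathbb Z^n$ and $q_0\cdot q=1$, so $q_0\in\widetilde\Lambda_q$. This is the only step that uses primitivity: if $q$ were not primitive, every value $m\cdot q$ with $m\in\mathbb Z^n$ would be divisible by $\gcd(x_1,\ldots,x_n)>1$, and $\widetilde\Lambda_q$ would be empty.

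Next I will prove the set identity $\widetilde\Lambda_q=\Lambda_q+q_0$ by two inclusions, using only linearity of $m\mapsto m\cdot q$ on $\mathbb Z^n$. If $m\in\widetilde\Lambda_q$, then $m-q_0\in\mathbb Z^n$ and $(m-q_0)\cdot q=1-1=0$. Hence $m-q_0\in\Lambda_q$, so $m\in\Lambda_q+q_0$. Conversely, if $\ell\in\Lambda_q$, then $\ell+q_0\in\mathbb Z^n$ and $(\ell+q_0)\cdot q=0+1=1$, so $\ell+q_0\in\widetilde\Lambda_q$.

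For the counting consequence I will use the translation $m\mapsto m-q_0$. By the identity just proved, it is a bijection $\widetilde\Lambda_q\to\Lambda_q$, with inverse $\ell\mapsto\ell+q_0$. Moreover $m\in\mathcal B$ if and only if $m-q_0\in\mathcal B-q_0$. So the translation restricts to a bijection
\[
\widetilde\Lambda_q\cap\mathcal B\longrightarrow\Lambda_q\cap(\mathcal B-q_0).
\]
Both sets are finite, because $\mathcal B$ is bounded and the lattices are discrete. The two cardinalities therefore agree, which gives the stated formula.

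There is no real obstacle here; the only substantive input is B\'ezout's identity in the first step. For the later application I note that $q=(\alpha^3,\alpha^2\beta,\alpha\beta^2,\beta^3)$ is primitive whenever $\gcd(\alpha,\beta)=1$. Indeed, a prime dividing all four coordinates divides both $\alpha^3$ and $\beta^3$, hence divides both $\alpha$ and $\beta$. So the lemma applies to $\mathcal L_{\alpha,\beta}=\Lambda_{\alpha,\beta}+q_0$.
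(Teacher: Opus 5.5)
Your proof is correct and takes the same approach as the paper. You use B\'ezout's identity to produce $q_0$ with $q_0\cdot q=1$, then linearity of $m\mapsto m\cdot q$ to get $m\in\widetilde\Lambda_q$ if and only if $m-q_0\in\Lambda_q$, and the counting identity follows by translation.
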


\begin{proof}
Choose $q_0\in\mathbb Z^n$ with $q_0\cdot q=1$ by B\'ezout's identity. Then $m\cdot q=1$ if and only if $(m-q_0)\cdot q=0$.
\end{proof}

\begin{remark}\label{constant affects}
    Translation preserves the volume of the region and the
covolume of the lattice. However, it can increase the radius
of a ball centered at the origin containing the region,
and hence it can affect \textbf{only} the constant of the error estimate in Schmidt's
theorem. 
\end{remark}

We next estimate the reducible forms satisfying $f(P)=1$,
so that they can be removed from the lattice-point counts. To do that, we first record the coefficient bounds that we will use throughout this section.

\begin{lemma}\label{lem:iwasawa-coefficients}
Let $v$ range over a fixed compact subset of $V_{\mathbb R}^{(i)}$, and write
\[
f=n(u)a(t)k\lambda\cdot v
=ax^3+bx^2y+cxy^2+dy^3.
\]
Uniformly for $u$ in a bounded interval and $k\in K$, one has
\[
|a|\ll\frac{\lambda}{t^3},
\qquad
|b|\ll\frac{\lambda}{t},
\qquad
|c|\ll\lambda t,
\qquad
|d|\ll\lambda t^3.
\]
\end{lemma}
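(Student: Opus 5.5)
We compute the coefficients of $f = n(u)a(t)k\lambda\cdot v$ by applying the four factors one at a time, and track how each acts on the coefficient vector $(a,b,c,d)$.

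\textbf{The factor $k\lambda$.} Under the twisted action the scalar $\lambda I_2$ multiplies the form by $\lambda^{-2}\lambda^3=\lambda$. The set $\{k\cdot v : k\in K,\ v\in\text{compact}\}$ is compact, since $K=\SO_2(\mathbb R)$ is compact and the action is continuous. Hence $w:=k\lambda\cdot v$ has coefficients $(a',b',c',d')$ with every $|a'|,|b'|,|c'|,|d'|\ll\lambda$, uniformly in $k$ and $v$. Because $\lambda$ is central, its position in the product does not matter.

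\textbf{The factor $a(t)$.} The matrix $a(t)=\operatorname{diag}(t^{-1},t)$ has determinant $1$, so $(a(t)\cdot w)(x,y)=w(t^{-1}x,ty)$. This multiplies the coefficient of $x^{3-j}y^j$ by $t^{-(3-j)}t^{j}=t^{2j-3}$. The resulting coefficients $(a'',b'',c'',d'')$ therefore satisfy
\[
|a''|\ll\lambda t^{-3},\quad |b''|\ll\lambda t^{-1},\quad |c''|\ll\lambda t,\quad |d''|\ll\lambda t^{3}.
\]

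\textbf{The factor $n(u)$.} Let $g=a(t)k\lambda$, so $f=n(u)g\cdot v$. The precise group law for the twisted action needs care: depending on whether the action is a left or right action, $n(u)$ may act before or after $a(t)$. I would first check this convention, since it is the only delicate point. If the ordering came out reversed, the substitution would be $y\mapsto uy$ applied before scaling, and the bounds could fail. With the convention that makes $\mathcal F$ a fundamental domain, $n(u)$ acts last on $a(t)k\lambda\cdot v$.

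We have $(x,y)n(u)=(x+uy,\,y)$ and $\det n(u)=1$, so $f(x,y)=h(x+uy,y)$ with $h=a(t)k\lambda\cdot v$. Expanding $a''(x+uy)^3+b''(x+uy)^2y+c''(x+uy)y^2+d''y^3$ gives
\[
a=a'',\quad b=b''+3ua'',\quad c=c''+2ub''+3u^2a'',\quad d=d''+uc''+u^2b''+u^3a''.
\]
Since $u$ is bounded and $t\geq t_0>0$, each added term is dominated by the leading one. For example, $|ua''|\ll\lambda t^{-3}\ll\lambda t^{-1}$, and $|u^2b''|\ll\lambda t^{-1}\ll\lambda t^{3}$, since $t^{-1}\ll t^{3}$ for $t\geq t_0$. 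This yields the stated bounds, uniformly in $u$ in a bounded interval and $k\in K$.
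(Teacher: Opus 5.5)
Your proposal is correct and follows the paper's proof: bound the coefficients of $k\cdot v$ uniformly, scale by $\lambda$ and $a(t)$ to get the factors $t^{-3},t^{-1},t,t^{3}$, then substitute $x\mapsto x+uy$ and use that $u$ is bounded and that $t\geq t_0$ (an assumption the paper leaves implicit). The ordering issue you flag is settled directly from the definition, since $(g_1\cdot(g_2\cdot f))(x,y)=\det(g_1g_2)^{-1}f((x,y)g_1g_2)$ shows the twisted action is a left action, so $n(u)$ does act last.
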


\begin{proof}
Write
\[
k\cdot v=a_kx^3+b_kx^2y+c_kxy^2+d_ky^3.
\]
The coefficients $a_k,b_k,c_k,d_k$ are uniformly bounded. Applying $a(t)$ and $\lambda$, and then substituting $x+uy$ for $x$, gives
\begin{align*}
a&=\lambda\frac{a_k}{t^3},\\
b&=\lambda\left(\frac{3ua_k}{t^3}+\frac{b_k}{t}\right),\\
c&=\lambda\left(\frac{3u^2a_k}{t^3}+\frac{2ub_k}{t}+c_kt\right),\\
d&=\lambda\left(\frac{u^3a_k}{t^3}+\frac{u^2b_k}{t}+uc_kt+d_kt^3\right).
\end{align*}
The stated bounds follow.
\end{proof}

\begin{lemma}[Reducible forms]\label{lem:reducible-fixed-P}
Let $B\subset V_{\mathbb R}^{(i)}$ be compact and consist of forms of nonzero discriminant. Uniformly for $u^{(i)}\in B$ and primitive $P=(\alpha,\beta)\in\mathbb Z^2$, the number of reducible forms
\[
f(x,y)=ax^3+bx^2y+cxy^2+dy^3
\in
\mathcal Fu^{(i)}\cap V_{\mathbb Z}
\]
with $c_-X\leq|\Disc(f)|<c_+X$, $f(P)=1$, and $a\neq0$ is
\[
O(X^{1/2+\varepsilon}).
\]
The implied constant depends only on $B$ and $\varepsilon$.
\end{lemma}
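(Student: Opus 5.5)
We plan to exploit the linear factor directly rather than any structure of the cubic ring. By Gauss's lemma, a reducible integral form with $a\neq0$ factors over $\mathbb Z$ as
\[
f(x,y)=(px+qy)(rx^2+sxy+wy^2),\qquad p,q,r,s,w\in\mathbb Z,\ pr=a\neq0 .
\]
Evaluating at $P$ and using $f(P)=1$ forces $p\alpha+q\beta=\pm1$ and $r\alpha^2+s\alpha\beta+w\beta^2=\pm1$. The coefficient relations are
\[
a=pr,\qquad b=ps+qr,\qquad c=pw+qs+\dots
\]
with the remaining ones, $c=pw+qs$ and $d=qw$, in the same pattern. Since $f\in\mathcal Fu^{(i)}$ and $c_-X\le|\Disc(f)|<c_+X$, we have $\lambda\asymp X^{1/4}$. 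Lemma~\ref{lem:iwasawa-coefficients} then gives $|a|\ll X^{1/4}t^{-3}$ and $|b|\ll X^{1/4}t^{-1}$, and $t\ge t_0$, so $|a|,|b|\ll X^{1/4}$ uniformly in $t$. The idea is to parametrize the reducible forms by the pair $(a,b)$, up to $X^{\varepsilon}$ extra choices. This gives $O(X^{1/2+\varepsilon})$ directly, with constants depending only on $B$ through Lemma~\ref{lem:iwasawa-coefficients}.

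\emph{Case $\beta\neq0$.} Fix $(a,b)$ in the box; there are $O(X^{1/2})$ such pairs. Choose a divisor $p\mid a$; the divisor bound gives $O(X^{\varepsilon})$ choices. This determines $r=a/p$. The condition $p\alpha+q\beta=\pm1$ determines $q$ in at most two ways, since $\beta\ne0$. Next, $s=(b-qr)/p$ is determined. Finally, $r\alpha^2+s\alpha\beta+w\beta^2=\pm1$ is linear in $w$ with nonzero coefficient $\beta^2$, so $w$ is determined in at most two ways. Hence $f$ is determined by $(a,b)$ up to $O(X^{\varepsilon})$ choices, and the count is $O(X^{1/2+\varepsilon})$, uniformly in $P$.

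\emph{Case $\beta=0$.} Primitivity gives $P=(\pm1,0)$, and $f(P)=1$ forces $a=\pm1$, so $p,r\in\{\pm1\}$. Here $q$ is no longer pinned down by $P$, and this is the step I expect to be the main obstacle. Moreover, since $a=\pm1$ forces $t\ll X^{1/12}$, the coefficients $c,d$ can be large. I would parametrize by $(b,c)$, which satisfy $|b||c|\ll\lambda^2\asymp X^{1/2}$ at each fixed $t$. I would then split $t$ dyadically and use the fact that the discriminant interval restricts $d$: as a polynomial in $d$, $\Disc$ has leading term $-27a^2d^2$, so for fixed $(a,b,c)$ the admissible $d$ lie in a union of $O(1)$ intervals. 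I would aim to show this union contains $O(1)$ integers once the Iwasawa constraint $|d-d_0(t,u)|\ll\lambda t^3$ is combined with the discriminant window; failing that, I would bound the number of integer roots $-q$ of $f(x,1)$ by $3$. The factorization then makes $d$ a function of $(b,c,q)$, since $s=\pm(b-qr)$ and $w=\pm(c-qs)$. The delicate point is to control the number of admissible $q$ for a given $(b,c)$. If that is not $O(X^{\varepsilon})$, I would fall back on counting $(q,s)$ directly. In that approach $q$ is a rational root of the reduced form, so it is of size $\ll t^{2}$, and $|s|\ll X^{1/4}/t$. Summing over the dyadic ranges $t\ll X^{1/12}$ and using the discriminant window to fix $w$ up to $O(1)$ choices should still give $X^{1/2+\varepsilon}$.
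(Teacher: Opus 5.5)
Your case $\beta\neq0$ is correct, and slightly cleaner than the paper's. The paper fixes $(a,d)$ with $|ad|\ll X^{1/2}$ and a divisor $r\mid a$, then solves $a=ru$, $d=sw$, $r\alpha+s\beta=\pm1$ and the second unit equation. That needs $\alpha\beta\neq0$ and a separate case $d=0$. You fix $(a,b)$ with $|a|,|b|\ll\lambda$ and solve successively for $q,s,w$, never dividing by $\alpha$ or by $s$, so $\alpha=0$ and $d=0$ are covered automatically.

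\textbf{The gap is the case $\beta=0$}, i.e.\ $P=(\pm1,0)$. This is exactly where the lemma is used (Propositions~\ref{prop:compact-upper-e1} and~\ref{prop:averaging-lower-e1}). There you only list alternatives, and each fails.
\begin{itemize}
\item For fixed $(b,c)$, the discriminant window together with the box does not leave $O(1)$ values of $d$. If it did, then since $|bc|\ll\lambda^2$ in each dyadic $t$-range, there would be only $O(X^{1/2}\log X)$ forms with $a=\pm1$ in total, contradicting Proposition~\ref{prop:denominator-e1}. Concretely, $|\partial_d\Disc|\ll\lambda^2$ when $t\asymp X^{1/12}$, so the window has length $\gg X^{1/2}$ in $d$.
\item Bounding the integer roots of $f(x,1)$ by $3$ bounds the number of $q$ per form, not the number of forms per $(b,c)$. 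It goes in the wrong direction.
\item The bound $|q|\ll t^2$ is false. The relation $a=\lambda a_k/t^3=\pm1$ forces $a_k\asymp t^3/\lambda$, so one root of $f(x,1)$ can have size $\asymp\lambda/t$, and it can be the rational one. For example, $(x+Ny)(x^2-2y^2)$ with $N\asymp X^{1/4}$ has $\Disc=8(N^2-2)^2$ and shape parameter tending to $\sqrt6\,i$, so it is reduced at bounded $t$.
\item The window does not fix $w$ up to $O(1)$ choices. Take $q=1$, $s\asymp X^{1/4}$ and $w=-\rho s$ with $\rho$ in a fixed subinterval of $(0,2)$. The form $(x+y)(x^2+sxy+wy^2)$ is then reduced: its shape point tends to the point of $\mathfrak F$ with real part $(1-\rho)/2$ and modulus $\sqrt{1+\rho+\rho^2}$. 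Its discriminant is $\Disc\approx s^4(1+\rho)^2$. This family alone gives $\asymp X^{1/2}$ reducible monic forms, whereas your fallback would give only $\sum_S S^2\cdot\lambda/S\ll X^{1/3}$.
\end{itemize}
The paper itself only says this case is ``obtained similarly''. Its device of solving $r\alpha+s\beta=\pm1$ for $s$ is unavailable here as well. So you located the real difficulty correctly, but the proposal does not resolve it.

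\textbf{The missing idea is to count orbits rather than coefficient boxes.}
\begin{itemize}
\item Suppose $f(P)=1$ and $f$ is reducible. Then $M_P\cdot f$ is monic by Lemma~\ref{lem:monic-from-solution}, hence of the form $(x+qy)g(x,y)$.
\item The integral substitution $x\mapsto x-qy$ turns this into $x(x^2+s'xy+my^2)$, with $\Disc=m^2(s'^2-4m)$.
\item Here $m\neq 0$ and $1\le|m|\ll X^{1/2}$. For each $m$, the condition $|s'^2-4m|<c_+X/m^2$ allows $\ll\sqrt X/|m|+1$ integers $s'$. Hence only $O(X^{1/2}\log X)$ pairs $(s',m)$ occur.
\item Each $\GL_2(\mathbb Z)$-orbit meets the multiset $\mathcal Fu^{(i)}$ at most $n_i$ times, by Proposition~\ref{prop:fundamental-multiplicity}.
\end{itemize}
This proves the lemma uniformly in $P$ and $u^{(i)}\in B$, including $P=(\pm1,0)$. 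It needs neither $a\neq0$ nor the divisor bound.
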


\begin{proof}
Write $f=g\cdot u^{(i)}$ with $g=n(u)a(t)k\lambda\in\mathcal F$. Since $c_-X\leq|\Disc(f)|<c_+X$, we have $\lambda\asymp X^{1/4}$. The standard coefficient estimates \ref{lem:iwasawa-coefficients} give
\[
|a|\ll X^{1/4},
\qquad
|ab|,|ac|,|ad|\ll X^{1/2},
\qquad
|abc|,|abd|\ll X^{3/4}.
\]
If $d=0$, then $f(P)=1$ determines $c$ once $a$ and $b$ are fixed, and hence the number of possibilities is 
\[
\sum_{0<|a|\ll X^{1/4}}
\sum_{|ab|\ll X^{1/2}}1
\ll X^{1/2+\varepsilon}.
\]
If $d\neq0$ and $f$ is reducible, write
\[
f(x,y)=(rx+sy)(ux^2+vxy+wy^2).
\]
Since $f(\alpha,\beta)=1$, both factors take the value $\pm1$ at $P$. Once $(a,d,r)$ is fixed, the relations
\[
a=ru,
\qquad
d=sw,
\qquad
r\alpha+s\beta=\pm1
\]
determine $s,u,w$, and the remaining unit equation determines $v$. Since $r\mid a$, the number of possibilities is
\[
\sum_{0<|a|\ll X^{1/4}}
\sum_{|ad|\ll X^{1/2}}
\sum_{r\mid a}1
\ll X^{1/2+\varepsilon}.
\]
The cases $\alpha=0$ or $\beta=0$ are obtained similarly. This proves the lemma.
\end{proof}

\subsection{A fixed solution $P\neq(\pm1,0)$}

\begin{theorem}\label{thm:non-escape-fixed-P}
Let $P=(\alpha,\beta)\in\mathbb Z^2$ be primitive with $P\neq(\pm1,0)$, and let $i\in\{0,1\}$. Then the family of reduced irreducible forms in $V_{\mathbb Z}^{(i)}$ satisfying $f(P)=1$ exhibits non-escape of mass. More precisely,
\[
\lim_{T\to\infty}\limsup_{X\to\infty}
\frac{N_{P,T}^{(i)}(X)}{N_P^{(i)}(X)}=0.
\]
\end{theorem}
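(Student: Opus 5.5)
The approach is to prove two estimates for fixed primitive $P=(\alpha,\beta)\neq(\pm1,0)$, so that $\beta\neq0$. First an upper bound $N_{P,T}^{(i)}(X)\ll_P X^{3/4}T^{-3}+X^{1/2+\varepsilon}$, then a lower bound $N_P^{(i)}(X)\gg_P X^{3/4}$. Taking $0<\varepsilon<1/4$ and dividing gives $\limsup_X N_{P,T}^{(i)}/N_P^{(i)}\ll_P T^{-3}$, which tends to $0$ as $T\to\infty$. Both estimates count points of the affine rank-three lattice $\mathcal L_{\alpha,\beta}$ inside regions of $H_{\alpha,\beta}$. Reducible forms are removed using Lemma~\ref{lem:reducible-fixed-P}, at cost $O(X^{1/2+\varepsilon})$. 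If $\alpha=0$, so that $P=(0,\pm1)$, the forms with $a=0$ are handled separately: $f(P)=1$ fixes $d$, and the resulting count is $O(X^{1/2})$ by Lemma~\ref{lem:iwasawa-coefficients}.

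\textbf{Upper bound.} I will decompose the cusp $t>T$ dyadically into ranges $t\in[2^jT,2^{j+1}T)$. Within each range I change variables by $g_t=a(t)\lambda$ (with $\lambda\asymp X^{1/4}$), which maps forms with coefficients of the sizes in Lemma~\ref{lem:iwasawa-coefficients} into a fixed bounded region. Under this rescaling, the coordinates $(a,b,c,d)$ are multiplied by $(t^3,t,t^{-1},t^{-3})/\lambda$. The hyperplane $H_{\alpha,\beta}$ has normal vector $(\alpha^3,\alpha^2\beta,\alpha\beta^2,\beta^3)$. When $\beta\neq0$, the $d$-component $\beta^3$ of this normal is the one that survives after rescaling: the transformed normal is dominated by $\beta^3t^3$. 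Hence the transformed rank-three lattice has covolume $\asymp|\beta|^3t^3\lambda^{-3}$, up to the factor $|(\alpha^3,\ldots,\beta^3)|$.

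The region that the translated lattice must be counted in is the slice of the $\lambda\asymp X^{1/4}$, $t\in[2^jT,2^{j+1}T)$ part of $\mathcal F v^{(i)}$. Parametrizing it in $(u,t,k,\lambda)$ coordinates, its three-dimensional volume inside $H_{\alpha,\beta}$ is of size $\lambda^3$ times a $t$-measure factor. I will do this bookkeeping directly with the Haar measure $t^{-2}\,du\,d^\times t\,dk\,d^\times\lambda$ restricted to $f(P)=1$. The expected result is that the range contributes $\ll X^{3/4}(2^jT)^{-3}$, and summing over $j$ gives $X^{3/4}T^{-3}$.

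The error term comes from Theorem~\ref{thm:schmidt}, applied to the region translated via Lemma~\ref{lem:affine-translate}; by Remark~\ref{constant affects}, the translation affects only the constant. The region is semialgebraic, so it has bounded narrow class. The error is controlled by the successive minima of $\Lambda_{\alpha,\beta}$, which are $\gg_P 1$, against a radius $R\ll X^{1/4}t^3$. This gives terms up to $R^2$. Since the contribution of large $t$ must be truncated anyway, I will bound crudely the range where $t$ is so large that the lattice forces $a=0$ or is otherwise empty. Lemma~\ref{lem:iwasawa-coefficients} shows $|a|\ll\lambda t^{-3}$, so $a\neq0$ forces $t\ll X^{1/12}$. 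With that restriction, the Schmidt error collapses to $O(X^{1/2+\varepsilon})$ after summing over dyadic ranges.

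\textbf{Lower bound.} I will restrict to a compact piece of the main body, $t\in[t_0,2]$ with $u\in\nu(t)$. There the region in $H_{\alpha,\beta}$ is a fixed shape scaled by $X^{1/4}$, with three-dimensional volume $\asymp_P X^{3/4}$. The lattice $\mathcal L_{\alpha,\beta}$ has fixed covolume, so Theorem~\ref{thm:schmidt} gives $\gg_P X^{3/4}-O_P(X^{1/2})$ points. Subtracting the reducible forms via Lemma~\ref{lem:reducible-fixed-P} yields $N_P^{(i)}(X)\gg_P X^{3/4}$. Multiplicities from Proposition~\ref{prop:fundamental-multiplicity} are bounded by $n_i$, so they do not affect the bound.

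\textbf{Main obstacle.} The hardest step is the upper bound's volume and error bookkeeping in the cusp. The region there is skewed: coefficients have sizes $\lambda t^{-3},\ldots,\lambda t^3$. The error term in Theorem~\ref{thm:schmidt} must not swamp the main term, and the volume must really decay like $t^{-3}$ after intersecting with the hyperplane. My first attempt is to use the $d$-coordinate, on which $\beta\neq0$ acts, to solve $f(P)=1$ for $d$, so that the count becomes a three-dimensional count in $(a,b,c)$ over a box of size $\lambda t^{-3}\times\lambda t^{-1}\times\lambda t$, carrying the constraint that $d$ be integral. This constraint is a congruence modulo $\beta^3$, which cuts the count by a factor of $|\beta|^3$. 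The box volume $\lambda^3t^{-3}$, integrated against $d^\times t$, then gives the $T^{-3}$ decay. The boundary errors are of lower order, $\ll\lambda^2t^{-2}+\lambda t+1$, as long as $a\neq0$ keeps $t\ll X^{1/12}$.
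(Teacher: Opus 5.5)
Your overall architecture matches the paper's: a dyadic cusp bound $N_{P,T}^{(i)}(X)\ll_P X^{3/4}T^{-3}+X^{1/2+\varepsilon}$, using $a\neq0\Rightarrow t\ll X^{1/12}$; a compact-piece lower bound $N_P^{(i)}(X)\gg_P X^{3/4}$ from Theorem~\ref{thm:schmidt} with a fixed lattice and radius $\asymp X^{1/4}$; then division. The real difference is how you get the cusp bound.

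\textbf{The two routes for the cusp bound.} In Proposition~\ref{prop:cusp-upper-fixed-P} the paper applies $\eta_S$ to both the skewed box and $\Lambda_{\alpha,\beta}$. The region then lies in a ball of radius $O(\lambda)$, the lattice has covolume $\gg|\beta|^3S^3$, and Schmidt's theorem applies. Your fibering argument in the ``main obstacle'' paragraph instead solves $f(P)=1$ for $d$. Since $\alpha^3$ is a unit mod $\beta^3$, the triple $(a,b,c)$ then runs over a coset of an index-$|\beta|^3$ sublattice of $\mathbb Z^3$, inside a box $\lambda S^{-3}\times\lambda S^{-1}\times\lambda S$. This is entirely elementary. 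It needs no successive minima and no narrow-class hypothesis. It does not even need Lemma~\ref{lem:reducible-fixed-P}, since irreducibility already forces $a\neq0$. It gives $\ll\lambda^3/(|\beta|^3S^3)+\lambda^2$ per dyadic range, essentially the paper's bound. What the paper's $\eta_t$-plus-Schmidt framework buys is uniformity: the same rescaling drives the lower bound and the case $P=(\pm1,0)$, where the covolume becomes $t^{-3}$.

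\textbf{Drop the Schmidt version in your upper-bound paragraph.} As written it fails. With the untransformed $\Lambda_{\alpha,\beta}$ (minima $\asymp_P1$) and $R\asymp X^{1/4}t^3$, the $R^2$ term is $X^{1/2}t^6$. At $t\asymp X^{1/12}$ this is $\asymp X$, which swamps the main term $X^{3/4}$, so the restriction $t\ll X^{1/12}$ does not make it collapse to $X^{1/2+\varepsilon}$. If you want Schmidt, apply it to $\eta_t\Lambda_{\alpha,\beta}$ with $R\asymp\lambda$. Its minima are $\asymp t^{-1},t,t^3$, giving error $\ll\lambda^2+\lambda t$. So rely on the fibering argument.

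\textbf{Smaller corrections.}
\begin{enumerate}
\item The boundary error in your fibering is $O(\lambda^2)$, not $\lambda^2t^{-2}+\lambda t+1$. There is one $O(1)$ error per pair $(b,c)$, and there are $\asymp(\lambda/S)(\lambda S)$ such pairs. Summing over $O(\log X)$ ranges still gives $O(X^{1/2}\log X)$, so your conclusion stands.
\item In the lower bound, the forms with $a=0$ must be subtracted for every $P$ with $\beta\neq0$, not only for $\alpha=0$, since Lemma~\ref{lem:reducible-fixed-P} covers only $a\neq0$. The same ``fix $(b,c)$, solve for $d$'' count gives $O(X^{1/2})$.
\item You should justify that the slice of the compact piece by $f(P)=0$ has positive $3$-volume. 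This holds because $K$ acts transitively on $\mathbb P^1(\mathbb R)$, so it can move a real root of the form to the direction of $P$.
\end{enumerate}
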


Let $P=(\alpha,\beta)\in\mathbb Z^2$ be primitive and assume $P\neq(\pm1,0)$. Then $\beta\neq0$. The proof rests on a cuspidal upper bound and a lower bound on the total number of forms. We first determine the geometry of the lattice $\Lambda_{\alpha,\beta}$. Facts about lattices that are stated without proof or reference may be found in \cite{schmidt1968asymptotic} or
any other basic text on the geometry of numbers.
Recall that 
\[
\Lambda_{\alpha,\beta}
=
\{(a,b,c,d)\in\mathbb Z^4:
a\alpha^3+b\alpha^2\beta+c\alpha\beta^2+d\beta^3=0\}.
\]
\begin{lemma}\label{lem:lattice-fixed-P}
The vectors
\[
v_1=(\beta,-\alpha,0,0),
\qquad
v_2=(0,\beta,-\alpha,0),
\qquad
v_3=(0,0,\beta,-\alpha)
\]
form a basis of $\Lambda_{\alpha,\beta}$. In particular,
\[
\dim E_{\Lambda_{\alpha,\beta}}=3
\]
and
\[
d(\Lambda_{\alpha,\beta})
=
\left\|(\alpha^3,\alpha^2\beta,\alpha\beta^2,\beta^3)\right\|,
\]
where $d(\Lambda)$ denotes the covolume of $\Lambda$ in its
real linear span $E_\Lambda=\operatorname{span}_{\mathbb R}(\Lambda)$,
with respect to the induced Euclidean measure.
\end{lemma}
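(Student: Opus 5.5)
We prove the lemma in three steps: the $v_j$ lie in $\Lambda_{\alpha,\beta}$, they generate it over $\mathbb Z$, and then the covolume formula follows from a standard identity for primitive vectors.

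\emph{Membership and rank.} Put $q=(\alpha^3,\alpha^2\beta,\alpha\beta^2,\beta^3)$, so that $\Lambda_{\alpha,\beta}=\{m\in\mathbb Z^4:m\cdot q=0\}$. A direct check gives $v_1\cdot q=\beta\alpha^3-\alpha\alpha^2\beta=0$, and $v_2\cdot q=v_3\cdot q=0$ in the same way. Since $\beta\neq0$, the matrix with rows $v_1,v_2,v_3$ is upper triangular in its first three columns with diagonal entries $\beta$. So the three vectors are linearly independent. Because $q\neq0$, the span $E_{\Lambda_{\alpha,\beta}}$ is the hyperplane $q^\perp$, and hence $\dim E_{\Lambda_{\alpha,\beta}}=3$.

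\emph{Generation.} This is the main step. A cleaner route than row reduction is the polynomial interpretation. Identify $(a,b,c,d)$ with the form $f=ax^3+bx^2y+cxy^2+dy^3$. Then $m\in\Lambda_{\alpha,\beta}$ means $f(\alpha,\beta)=0$, which says that the primitive linear form $\beta x-\alpha y$ divides $f$ in $\mathbb Q[x,y]$. By Gauss's lemma, since $\gcd(\alpha,\beta)=1$, the quotient is an integral binary quadratic form $px^2+rxy+sy^2$. Expanding $(\beta x-\alpha y)(px^2+rxy+sy^2)$ gives exactly $p v_1+r v_2+s v_3$ with $p,r,s\in\mathbb Z$. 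Therefore $v_1,v_2,v_3$ form a $\mathbb Z$-basis of $\Lambda_{\alpha,\beta}$.

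\emph{Covolume.} The vector $q$ is primitive: if a prime $p$ divided all its entries, it would divide both $\alpha^3$ and $\beta^3$, contradicting $\gcd(\alpha,\beta)=1$. We then use the standard fact that for primitive $q\in\mathbb Z^n$ the lattice $q^\perp\cap\mathbb Z^n$ has covolume $\|q\|$. The argument is as follows. Complete a basis $w_1,w_2,w_3$ of $\Lambda_{\alpha,\beta}$ by a vector $q_0$ with $q_0\cdot q=1$, which exists by Lemma~\ref{lem:affine-translate}. This gives a basis of $\mathbb Z^4$, so $1=\det\mathbb Z^4=d(\Lambda_{\alpha,\beta})\cdot\operatorname{dist}(q_0,q^\perp)=d(\Lambda_{\alpha,\beta})/\|q\|$. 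Hence $d(\Lambda_{\alpha,\beta})=\|q\|$.

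As a cross-check on the second step, one can compute the $3\times3$ minors of the matrix with rows $v_1,v_2,v_3$. They are $\pm(\alpha^3,\alpha^2\beta,\alpha\beta^2,\beta^3)$, so the Gram determinant equals $\|q\|^2$. Comparing with the covolume of the full lattice $q^\perp\cap\mathbb Z^4$ then shows the index is $1$, confirming that $v_1,v_2,v_3$ generate all of $\Lambda_{\alpha,\beta}$.

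The one nonformal input is the generation step, and it rests on Gauss's lemma applied with the primitive divisor $\beta x-\alpha y$. Everything else is linear algebra.
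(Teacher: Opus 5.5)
Your proof is correct. The generation step is the same as the paper's: Gauss's lemma for the primitive divisor $\beta x-\alpha y$, then comparing coefficients. The covolume step takes a different route. The paper identifies $\Lambda_{\alpha,\beta}$ with $q^\perp\cap\mathbb Z^4$ and quotes the general fact that a primitive lattice and its orthogonal lattice have equal covolume, giving $d(\Lambda_{\alpha,\beta})=d(\mathbb Z q)=\|q\|$. You instead prove the codimension-one case directly: a B\'ezout vector $q_0$ with $q_0\cdot q=1$ completes a basis of $\Lambda_{\alpha,\beta}$ to a basis of $\mathbb Z^4$, and base times height gives $1=d(\Lambda_{\alpha,\beta})/\|q\|$. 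This is self-contained at the cost of a few lines. The one step you should write out is why $w_1,w_2,w_3,q_0$ span $\mathbb Z^4$: $m=(m-(m\cdot q)q_0)+(m\cdot q)q_0$, and the first summand lies in $\Lambda_{\alpha,\beta}$.

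Your Cauchy--Binet computation is more than a cross-check. The $3\times3$ minors are $\pm(\alpha^3,\alpha^2\beta,\alpha\beta^2,\beta^3)$, so the sublattice spanned by $v_1,v_2,v_3$ has covolume $\|q\|$. Combined with the basis-completion step, this proves generation without Gauss's lemma. Combined with Gauss's lemma, it proves the covolume formula without any duality fact.

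Finally, your independence argument uses $\beta\neq0$. That is the standing assumption at this point, but the paper later applies the lemma to $\Lambda_{\alpha,0}$. Independence still holds there, because the last three columns form a triangular matrix with diagonal $-\alpha=\mp1$. More generally, it holds because multiplication by the nonzero form $\beta x-\alpha y$ is injective.
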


\begin{proof}
A form $F(X,Y)=aX^3+bX^2Y+cXY^2+dY^3$ lies in $\Lambda_{\alpha,\beta}$ precisely when $F(\alpha,\beta)=0$. Since $\gcd(\alpha,\beta)=1$, Gauss's lemma implies that $\beta X-\alpha Y$ divides $F$ in $\mathbb Z[X,Y]$. Thus
\[
F(X,Y)
=
(\beta X-\alpha Y)(xX^2+yXY+zY^2),
\]
and comparison of coefficients gives
\[
(a,b,c,d)=xv_1+yv_2+zv_3.
\]
The reverse inclusion and linear independence are immediate. Moreover,\footnote{A $t$-dimensional integral lattice $\Lambda\subseteq\mathbb Z^m$
is called primitive if $\Lambda=E_\Lambda\cap\mathbb Z^m$.
Its orthogonal lattice is
$\Lambda^\perp=E_\Lambda^\perp\cap\mathbb Z^m$,
which is a primitive integral lattice of dimension $m-t$. Recall that if $\Lambda\subset\mathbb{Z}^m$ is primitive then $d(\Lambda)=d(\Lambda^\perp)$.}
\[
\Lambda_{\alpha,\beta}
=
(\alpha^3,\alpha^2\beta,\alpha\beta^2,\beta^3)^\perp.
\]
The normal vector is primitive, and the equality of covolumes of a primitive lattice and its perpendicular lattice (this useful proof appears in \cite{siegel2013lectures}) yields the final formula.
\end{proof}

For $t\geq1$, let
\[
\eta_t
=
\begin{pmatrix}
t&0\\0&t^{-1}
\end{pmatrix}.
\]
Its action on coefficient space is
\[
\eta_t(a,b,c,d)
=
(t^3a,tb,t^{-1}c,t^{-3}d).
\]
Set
\[
\Lambda_{\alpha,\beta}^t
=
\eta_t\Lambda_{\alpha,\beta}.
\]
The following lemma will be useful when we apply Schmidt's theorem~\ref{thm:schmidt} later.

\begin{lemma}\label{lem:transformed-lattice-fixed-P}
The lattice $\Lambda_{\alpha,\beta}^t$ has basis
\[
v_1^t=(\beta t^3,-\alpha t,0,0),
\qquad
v_2^t=(0,\beta t,-\alpha t^{-1},0),
\]
\[
v_3^t=(0,0,\beta t^{-1},-\alpha t^{-3}).
\]
Moreover,
\[
E_{\Lambda_{\alpha,\beta}^t}
=
(\alpha^3t^{-3},\alpha^2\beta t^{-1},
\alpha\beta^2t,\beta^3t^3)^\perp
\]
and
\[
|\beta|^3t^3
\leq
d(\Lambda_{\alpha,\beta}^t)
\leq
(|\alpha|+|\beta|)^3t^3.
\]
The product $\|v_1^t\|\|v_2^t\|\|v_3^t\|$ is comparable to $d(\Lambda_{\alpha,\beta}^t)$, with constants depending only on $P$.
\end{lemma}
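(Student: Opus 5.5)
The plan is to deduce everything from Lemma~\ref{lem:lattice-fixed-P} together with the fact that $\eta_t$ is a linear automorphism of $\mathbb R^4$ of determinant $1$. Throughout, $t\geq1$ and $\beta\neq0$, which holds because $P\neq(\pm1,0)$ is primitive.

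\emph{Basis.} Since $\eta_t$ is linear and invertible, it maps a $\mathbb Z$-basis of $\Lambda_{\alpha,\beta}$ to a $\mathbb Z$-basis of $\Lambda^t_{\alpha,\beta}$. Applying $\eta_t(a,b,c,d)=(t^3a,tb,t^{-1}c,t^{-3}d)$ to $v_1,v_2,v_3$ gives exactly $v_1^t,v_2^t,v_3^t$.

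\emph{Span.} Set $n=(\alpha^3,\alpha^2\beta,\alpha\beta^2,\beta^3)$. By Lemma~\ref{lem:lattice-fixed-P}, $E_{\Lambda_{\alpha,\beta}}=n^\perp$. For $x\in n^\perp$ we have
\[
\langle \eta_t x,\eta_t^{-T}n\rangle=\langle x,n\rangle=0 .
\]
Hence $E_{\Lambda^t_{\alpha,\beta}}=\eta_t(n^\perp)=(\eta_t^{-T}n)^\perp$. Because $\eta_t$ is diagonal, $\eta_t^{-T}=\operatorname{diag}(t^{-3},t^{-1},t,t^3)$, so
\[
n_t:=\eta_t^{-T}n=(\alpha^3t^{-3},\alpha^2\beta t^{-1},\alpha\beta^2t,\beta^3t^3).
\]
This is the claimed description of the span.

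\emph{Covolume.} I would use the standard hyperplane formula: for an invertible linear map $g$ and a hyperplane $n^\perp$, the induced map $n^\perp\to g(n^\perp)$ scales $3$-dimensional volume by
\[
|\det g|\,\frac{\|g^{-T}n\|}{\|n\|}.
\]
One way to see this is to compare the volumes of parallelepipeds spanned by a basis of $n^\perp$ together with the unit normal, before and after applying $g$. Since $\det\eta_t=1$ and $d(\Lambda_{\alpha,\beta})=\|n\|$, this gives
\[
d(\Lambda^t_{\alpha,\beta})=\|n_t\| .
\]
An alternative check is to compute the generalized cross product of $v_1^t,v_2^t,v_3^t$ directly; it should be $\pm n_t$.

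\emph{Bounds on the covolume.} For the lower bound, $\|n_t\|\geq|\beta^3t^3|=|\beta|^3t^3$. For the upper bound, $\|n_t\|\leq\|n_t\|_1$. Since $t\geq1$, each of $t^{-3},t^{-1},t$ is at most $t^3$, so
\[
\|n_t\|_1\leq t^3\bigl(|\alpha|^3+|\alpha|^2|\beta|+|\alpha||\beta|^2+|\beta|^3\bigr)\leq(|\alpha|+|\beta|)^3t^3 .
\]

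\emph{Product of norms.} Hadamard's inequality gives $d(\Lambda^t_{\alpha,\beta})\leq\|v_1^t\|\|v_2^t\|\|v_3^t\|$. For the reverse direction I estimate each norm directly, using $t\geq1$ and $\beta\neq0$:
\[
\|v_1^t\|\asymp_P t^3,\qquad \|v_2^t\|\asymp_P t .
\]
For $v_3^t$,
\[
\|v_3^t\|\leq|\beta|t^{-1}+|\alpha|t^{-3}\leq(|\alpha|+|\beta|)\,t^{-1}.
\]
The product is therefore $\ll_P t^3\ll_P d(\Lambda^t_{\alpha,\beta})$, using the lower bound $|\beta|^3t^3$. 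This yields comparability with constants depending only on $P$.

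The only step that is not pure bookkeeping is the hyperplane covolume formula. I expect it to be the main obstacle, and I would fall back on the explicit cross-product computation if a cleaner citation is unavailable.
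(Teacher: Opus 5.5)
Your proposal is correct and follows the same route as the paper: push the basis of Lemma~\ref{lem:lattice-fixed-P} forward by $\eta_t$, identify the transformed normal $\eta_t^{-T}n$, read off the covolume as its norm, and derive the bounds and the comparability with $\|v_1^t\|\|v_2^t\|\|v_3^t\|$ from the explicit formulas using $t\geq1$ and $\beta\neq0$. Your hyperplane volume-scaling formula with $\det\eta_t=1$ (or the generalized cross product, which is $\pm n_t$) supplies the justification for $d(\Lambda^t_{\alpha,\beta})=\|n_t\|$. The paper only asserts this identity, and it is worth proving because $\Lambda^t_{\alpha,\beta}$ is no longer an integral lattice.
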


\begin{proof}
The displayed basis follows by applying $\eta_t$ to the basis in Lemma~\ref{lem:lattice-fixed-P}. The transformed normal vector is
\[
(\alpha^3t^{-3},\alpha^2\beta t^{-1},
\alpha\beta^2t,\beta^3t^3),
\]
whose norm equals the covolume. The stated upper and lower bounds follow directly, and the comparison with the product of the basis lengths is immediate from the explicit formulas.
\end{proof}

We use $\Lambda_{\alpha,\beta}^t$ instead of $\Lambda_{\alpha,\beta}$
because it is more convenient to work with a region contained in a
ball of radius $O(\lambda)$ than with a region whose
dimensions are of orders $\lambda/t^3$, $\lambda/t$, $\lambda t$,
and $\lambda t^3$ (recall Lemma~\ref{lem:iwasawa-coefficients}).
We now isolate the two estimates that imply non-escape.

Recall that for a fixed primitive point $P\in\mathbb Z^2$, 
\[
N_P^{(i)}(X)
=
\#\{f\in\mathcal Fv^{(i)}\cap V_{\mathbb Z}^{(i),\mathrm{irr}}:
f(P)=1,\ c_-X\leq|\Disc(f)|<c_+X\},
\]
and
\[
N_{P,T}^{(i)}(X)
=
\#\{f\in\mathcal F_Tv^{(i)}\cap V_{\mathbb Z}^{(i),\mathrm{irr}}:
f(P)=1,\ c_-X\leq|\Disc(f)|<c_+X\}.
\]
\begin{proposition}[Cuspidal upper bound]\label{prop:cusp-upper-fixed-P}
For every $\varepsilon>0$ and every fixed $T>t_0$,
\[
N_{P,T}^{(i)}(X)
\ll
\frac{X^{3/4}}{T^3|\beta|^3}
+
\frac{X^{1/2}}{|\beta|^2}
+
X^{1/2+\varepsilon},
\]
uniformly for primitive $P=(\alpha,\beta)$ with $\beta\neq0$.
\end{proposition}

\begin{proof}
By Lemma~\ref{lem:reducible-fixed-P}, it is enough to count \textit{all}
integral binary cubic forms with nonzero leading coefficient, at the cost of
$O(X^{1/2+\varepsilon})$. Write
\[
f(X,Y)=a_fX^3+b_fX^2Y+c_fXY^2+d_fY^3.
\]
Recall that $\lambda\asymp X^{1/4}$. By
Lemma~\ref{lem:iwasawa-coefficients}, the condition $a_f\neq0$
implies
\[
1\ll \frac{\lambda}{t^3},
\qquad\text{and hence}\qquad
t\ll \lambda^{1/3}.
\]

We now divide the range
\[
T\leq t\ll \lambda^{1/3}
\]
into dyadic intervals
\[
S\leq t<2S,
\qquad
S=2^jT.
\]
For such an $S$, let $\mathcal A_S(P;X)$ denote the contribution to
$N_{P,T}^{(i)}(X)$ from those representatives
\[
f=n(u)a(t)k\lambda\cdot v^{(i)}
\]
satisfying
\[
S\leq t<2S,
\qquad
c_-X\leq|\Disc(f)|<c_+X,
\qquad
f(P)=1,
\qquad
a_f\neq0.
\]
Equivalently,
\[
\mathcal A_S(P;X)
=
\left\{
f\in V_{\mathbb Z}^{(i)}:
\begin{array}{l}
f=n(u)a(t)k\lambda\cdot v^{(i)}
\text{ for some } n(u)a(t)k\lambda\in\mathcal F,\\
S\leq t<2S,\quad c_-X\leq|\Disc(f)|<c_+X,\\
f(P)=1,\quad a_f\neq0
\end{array}
\right\},
\]
where the set is interpreted with the multiplicity coming from the
fundamental multiset.

For every form in this dyadic range,
Lemma~\ref{lem:iwasawa-coefficients} gives the coefficient bounds
\[
|a_f|\ll \frac{\lambda}{S^3},
\qquad
|b_f|\ll \frac{\lambda}{S},
\qquad
|c_f|\ll \lambda S,
\qquad
|d_f|\ll \lambda S^3.
\]
Thus the real region containing the coefficient vectors
$(a_f,b_f,c_f,d_f)$ has side lengths
\[
\ll
\frac{\lambda}{S^3},
\quad
\frac{\lambda}{S},
\quad
\lambda S,
\quad
\lambda S^3.
\]
Recall that, by Lemma~\ref{constant affects}, we can translate $\mathcal{L}_{\alpha,\beta}$ to $\Lambda_{\alpha,\beta}$ and that  
\[
\eta_S(a,b,c,d)
=
(S^3a,Sb,S^{-1}c,S^{-3}d).
\]
Applying the invertible linear map $\eta_S$ to both the above
real region and the lattice $\Lambda_{\alpha,\beta}$ preserves
the lattice-point count.\footnote{Indeed, $\eta_S$ induces a
bijection between the lattice points in the original region
and the points of the transformed lattice in the transformed
region.} The transformed region has side lengths
\[
\ll
\lambda,
\quad
\lambda,
\quad
\lambda,
\quad
\lambda.
\]
So, it is contained in a ball of radius $O(\lambda)$.
Moreover, by Lemma~\ref{lem:transformed-lattice-fixed-P}, the
lattice
\[
\eta_S\{f\in V_{\mathbb Z}:f(P)=0\}
\]
has covolume
\[
\gg |\beta|^3S^3.
\]
After computing all the orthogonal projections  using Lemma~\ref{lem:lattice-fixed-P} for the lattice  $\eta_t\{f\in V_{\mathbb Z}:f(P)=0\}$ and applying Schmidt's theorem~\ref{thm:schmidt}, we get\footnote{We use three-dimensional Hausdorff measure to measure
volume in the three-dimensional affine subspace of $\mathbb R^4$;
see \cite{widmer2012lipschitz} for more details. Similarly, the volumes of
the projections appearing in the error term are measured using
Hausdorff measure of the corresponding dimension. For background
on Hausdorff measure, see \cite{evans2025measure}.}
\[
\#\mathcal A_S(P;X)
\ll
\frac{\lambda^3}{|\beta|^3S^3}
+
\frac{\lambda^2}{|\beta|^2}
+
1.
\]
Summing over $S_j=2^jT$, for $0\leq j\leq J$, where
$2^JT\ll\lambda^{1/3}$ and hence $J\ll\log X$, gives
\[
N_{P,T}^{(i)}(X)
\ll
\frac{\lambda^3}{|\beta|^3T^3}
\sum_{j=0}^{J}2^{-3j}
+
\frac{\lambda^2}{|\beta|^2}(J+1)
+
(J+1)
+
X^{1/2+\varepsilon}.
\]
The geometric sum satisfies
\[
\sum_{j=0}^{J}2^{-3j}
=
\frac{1-2^{-3(J+1)}}{1-2^{-3}}
\leq \frac{8}{7}.
\]
Since $J+1\ll\log X$, we obtain
\[
N_{P,T}^{(i)}(X)
\ll
\frac{\lambda^3}{T^3|\beta|^3}
+
\frac{\lambda^2\log X}{|\beta|^2}
+
\log X
+
X^{1/2+\varepsilon}.
\]
Using $\lambda\asymp X^{1/4}$ and that
the logarithmic terms are absorbed into
$X^{1/2+\varepsilon}$. This gives the stated bound
\[
N_{P,T}^{(i)}(X)
\ll
\frac{X^{3/4}}{T^3|\beta|^3}
+
\frac{X^{1/2}}{|\beta|^2}
+
X^{1/2+\varepsilon}.
\]
\end{proof}

\begin{proposition} [Total lower bound]\label{prop:compact-lower-fixed-P}
For every $\varepsilon>0$,
\[
N_P^{(i)}(X)
\geq
c\frac{X^{3/4}}{(|\alpha|+|\beta|)^3}
-
C\frac{X^{1/2}}{|\beta|^2}
-
C_\varepsilon X^{1/2+\varepsilon},
\]
where the constants are uniform for primitive $P=(\alpha,\beta)$ with $\beta\neq0$. In particular, for fixed $P$,
\[
N_P^{(i)}(X)\gg_P X^{3/4}.
\]
\end{proposition}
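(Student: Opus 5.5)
Since the statement is a lower bound, we will not try to use the whole fundamental domain. We restrict to a fixed compact piece of $\mathcal F$ where the lattice geometry is uniform. Fix a compact set $\mathcal C=\{n(u)a(t)k:u\in[-\tfrac12,\tfrac12],\ 1\leq t\leq 2,\ k\in K\}$, which lies inside the Gauss domain since $t\geq1>t_0$. Consider the region
\[
\mathcal R_X=\{g\lambda\cdot v^{(i)}:g\in\mathcal C,\ c_-X\leq\lambda^4|\Disc(v^{(i)})|<c_+X\}\subset V_{\mathbb R}^{(i)}.
\]
Because multiplicities in $\mathcal Fv^{(i)}$ are at least one, we have $N_P^{(i)}(X)\geq\#\{f\in\mathcal R_X\cap V_{\mathbb Z}^{\mathrm{irr}}:f(P)=1\}$.

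\textbf{Geometry of the region.} The set $\mathcal R_X$ is the dilate $X^{1/4}\mathcal R_1$ of a fixed compact semialgebraic region with nonempty interior in $V_{\mathbb R}^{(i)}$. The group $\GL_2(\mathbb R)$ acts with open orbits, and $\mathcal C\times[\lambda$-interval$]$ has positive Haar measure, so $\mathcal R_1$ contains an open ball. Its slice $\mathcal R_X\cap H_{\alpha,\beta}$ is an affine three-dimensional section. We need a lower bound on its three-dimensional volume, and we get it directly. The affine hyperplane $\{F(P)=X^{-3/4}\}$, after scaling, passes at distance $X^{-3/4}/\|(\alpha^3,\dots,\beta^3)\|$ from the origin. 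Meanwhile $\mathcal R_1$ contains some form $f_0$ with $f_0(P)\neq0$, and by continuity it contains a small ball around a form vanishing to first order appropriately. Concretely, scaling gives
\[
\mathrm{Vol}_3(\mathcal R_X\cap H_{\alpha,\beta})=X^{3/4}\,\mathrm{Vol}_3(\mathcal R_1\cap\{F(P)=X^{-1/4}\cdot X^{-0}\}).
\]
We then need the slice volume of $\mathcal R_1$ at level near $0$ in direction $q=(\alpha^3,\alpha^2\beta,\alpha\beta^2,\beta^3)$. This volume is $\gg1$ uniformly in $P$, because $\mathcal R_1$ contains a ball $B$ around some $f_0$ with $f_0(P)$ small. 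We choose $f_0$ from the explicit shape: the forms $n(u)a(t)k\lambda v$ with varying $k$ have roots moving around the circle, so some $k$ makes $f_0(P)=0$. The hyperplane through $f_0$ then meets $B$ in a 3-ball of fixed radius, so the slice volume is $\gg1$ uniformly. This step requires exactly $\mathrm{Vol}/d(\Lambda_{\alpha,\beta})$, and $d(\Lambda_{\alpha,\beta})=\|q\|\leq(|\alpha|+|\beta|)^3$ by Lemma~\ref{lem:lattice-fixed-P}. Hence the main term is $\geq cX^{3/4}/(|\alpha|+|\beta|)^3$.

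\textbf{Error terms.} Translate $\mathcal L_{\alpha,\beta}$ to $\Lambda_{\alpha,\beta}$ using Lemma~\ref{lem:affine-translate} and Remark~\ref{constant affects}. Apply Theorem~\ref{thm:schmidt} in the three-dimensional span, where the region lies in a ball of radius $O(X^{1/4})$; it has bounded narrow class since it is semialgebraic of bounded complexity. The successive minima of $\Lambda_{\alpha,\beta}$ satisfy $\lambda_1\geq1$, so the $j=1$ term is $\ll X^{1/4}$. For the $j=2$ term we need $\lambda_1\lambda_2\gg|\beta|^2$. Every vector in the lattice is $(\beta x,\beta y-\alpha x,\beta z-\alpha y,-\alpha z)$, and two independent short vectors span a two-dimensional sublattice whose covolume is bounded below by a $2\times2$ minor. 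The planned route is a direct computation with the basis of Lemma~\ref{lem:lattice-fixed-P}, mirroring Proposition~\ref{prop:cusp-upper-fixed-P} at $t\asymp1$; this gives the error $X^{1/2}/|\beta|^2$. The translation by $q_0$ may enlarge the radius, but only the constant changes, per Remark~\ref{constant affects}; we must make sure that constant is uniform. To do that, pick $q_0$ with $\|q_0\|\ll1$ relative to the scaled problem, or otherwise absorb it. Finally, subtract the reducible forms and the forms with $a=0$ using Lemma~\ref{lem:reducible-fixed-P}, at cost $O(X^{1/2+\varepsilon})$. The $a=0$ case is handled by the same lemma's counting, since $a=0$ forms in $\mathcal R_X$ lie in a codimension-one lattice slice, which gives $O(X^{1/2})$.

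\textbf{Main obstacle.} The hardest point is uniformity in $P$: both the lower bound on the slice volume and the control of $\lambda_1\lambda_2$ in terms of $\beta$ must be uniform. For fixed $P$, which is the "in particular" clause, all of this is immediate. There $N_P^{(i)}(X)\gg_PX^{3/4}$ follows, since the error terms are $O(X^{1/2+\varepsilon})$.
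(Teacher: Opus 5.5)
Your proposal is correct and is essentially the paper's argument. You restrict to a compact range of $t$ inside $\mathcal F$, apply Schmidt's theorem to the rank-three affine lattice $\mathcal L_{\alpha,\beta}$ with $d(\Lambda_{\alpha,\beta})\le(|\alpha|+|\beta|)^3$ for the main term and $\lambda_1\lambda_2\gg\beta^2$ for the $j=2$ error, and remove reducible and $a=0$ forms via Lemma~\ref{lem:reducible-fixed-P}. Your $K$-rotation argument (an interior form vanishing at $P$) makes explicit the uniform slice-volume bound that the paper only asserts, and the translation issue you flag closes easily: reducing $q_0$ modulo $\Lambda_{\alpha,\beta}$ gives $\|q_0\|\ll|\alpha|+|\beta|$, while the inequality is trivial unless $|\alpha|+|\beta|\ll X^{1/4}$ (also, the rescaled level is $X^{-1/4}$, not $X^{-3/4}$).
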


\begin{proof}
As in Proposition~\ref{prop:cusp-upper-fixed-P}, we first apply Lemma~\ref{lem:reducible-fixed-P} and we count \textit{all} integral binary cubic forms with nonzero leading coefficient, at the cost of $O(X^{1/2+\varepsilon})$. Additionally, Lemma~\ref{lem:iwasawa-coefficients} forces $t\ll\lambda^{1/3}$. Continuity of the discriminant allows us to choose a compact subregion $\mathcal C_X\subset\mathcal Fv^{(i)}$ with $t$ restricted to a fixed compact interval $[t_1,t_2]\subset[t_0,c\lambda^{1/3})$, small enough that every form in $\mathcal C_X$ belongs to $V_{\mathbb R}^{(i)}$ and satisfies
$c_-X\leq|\Disc(f)|<c_+X$. Since $t,u,k$ remain in a fixed compact range, the coefficient coordinates of forms in $\mathcal C_X$ have side lengths comparable to
\[
\frac{\lambda}{t^3},  \qquad \frac{\lambda}{t},\qquad\lambda t,\qquad\lambda t^3.
\]
After translating $\mathcal L_{\alpha,\beta}$ to $\Lambda_{\alpha,\beta}$ due to Remark~\ref{constant affects} and applying $\eta_{t}$ to both $\mathcal C_X$ and $\Lambda_{\alpha,\beta}$ we get that  the coefficient coordinates of forms in $\eta_t \mathcal C_X$ have side lengths comparable to
\[
\lambda,  \qquad \lambda,\qquad\lambda ,\qquad\lambda .
\]
Thus, the region $\eta_t \mathcal C_X$ contains a ball of radius $O(\lambda)$. Additionally, by Lemma~\ref{lem:transformed-lattice-fixed-P}
the
lattice
\[
\eta_t\{f\in V_{\mathbb Z}:f(P)=0\}
\]
has covolume
\[
\ll
(|\alpha|+|\beta|)^3t^3.
\]
After computing all the orthogonal projections  using Lemma~\ref{lem:lattice-fixed-P} for the lattice  $\eta_t\{f\in V_{\mathbb Z}:f(P)=0\}$ and applying Schmidt's theorem~\ref{thm:schmidt}, we obtain\footnote{We use three-dimensional Hausdorff measure to measure
volume in the three-dimensional affine subspace of $\mathbb R^4$;
see \cite{widmer2012lipschitz} for more details. Similarly, the volumes of
the projections appearing in the error term are measured using
Hausdorff measure of the corresponding dimension. For background
on Hausdorff measure, see \cite{evans2025measure}.}
\[
N_P^{(i)}(X)
\geq
c\frac{\lambda^3}{(|\alpha|+|\beta|)^3}
-
C\frac{\lambda^2}{|\beta|^2}
-
C'_{\epsilon}X^{1/2+\epsilon}.
\]
where $c$, $C$ and $C'$ are some positive constants.
Recall that $\lambda\asymp X^{1/4}$ and we have proved the proposition.
\end{proof}

We can now give the proof of Theorem~\ref{thm:non-escape-fixed-P}.

\begin{proof}
For fixed $P$, Propositions~\ref{prop:cusp-upper-fixed-P} and~\ref{prop:compact-lower-fixed-P} imply
\[
N_{P,T}^{(i)}(X)
\ll_P
\frac{X^{3/4}}{T^3}+X^{1/2+\varepsilon},
\qquad
N_P^{(i)}(X)\gg_P X^{3/4}.
\]
Taking $0<\varepsilon<1/4$ gives
\[
\limsup_{X\to\infty}
\frac{N_{P,T}^{(i)}(X)}{N_P^{(i)}(X)}
\ll_P T^{-3}.
\]
Letting $T\to\infty$ proves the theorem.
\end{proof}

\subsection{The solutions $P=(\pm1,0)$}

\begin{theorem}\label{thm:escape-e1}
Let $P=(\alpha,0)$ with $\alpha\in\{\pm1\}$, and let $i\in\{0,1\}$. Then the family of reduced irreducible forms in $V_{\mathbb Z}^{(i)}$ satisfying $f(P)=1$ exhibits escape of mass. More precisely,
\[
\lim_{T\to\infty}\liminf_{X\to\infty}
\frac{N_{P,T}^{(i)}(X)}{N_P^{(i)}(X)}
=1.
\]
\end{theorem}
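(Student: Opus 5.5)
The plan is to bound the complementary count $M_{P,T}^{(i)}(X)=N_P^{(i)}(X)-N_{P,T}^{(i)}(X)$ from above by $O(X^{3/4}T^3)$ and the total count $N_P^{(i)}(X)$ from below by $\gg X^{5/6}$. Then, for every fixed $T$,
\[
1-\frac{N_{P,T}^{(i)}(X)}{N_P^{(i)}(X)}=\frac{M_{P,T}^{(i)}(X)}{N_P^{(i)}(X)}\ll T^3X^{-1/12}\to0,
\]
so the $\liminf$ equals $1$ already for every fixed $T$, and hence also in the limit $T\to\infty$. Since $f(-P)=-f(P)$ and the multiset $\mathcal Fv^{(i)}$ is symmetric under $f\mapsto -f$ (via $-I\in\GL_2(\mathbb Z)$ and the twisted action), it suffices to treat $P=(1,0)$. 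There the condition $f(P)=1$ is simply $a_f=1$.

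\textbf{Upper bound on the compact part.} For forms with $t_0\le t\le T$, Lemma~\ref{lem:iwasawa-coefficients} together with $\lambda\asymp X^{1/4}$ confines $(b,c,d)$ to a box of side lengths $\ll\lambda/t$, $\lambda t$, $\lambda t^3$. I would apply the dyadic decomposition in $t$ from Proposition~\ref{prop:cusp-upper-fixed-P}, now with the affine lattice $\mathcal L_{1,0}=\{(1,b,c,d)\}$. This is a translate of $\Lambda_{1,0}=\{a=0\}\cap\mathbb Z^4$ (Lemma~\ref{lem:affine-translate}), and after applying $\eta_S$ it has covolume $S\cdot S^{-1}\cdot S^{-3}=S^{-3}$ in its span. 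Schmidt's theorem~\ref{thm:schmidt} then gives a contribution $\ll\lambda^3S^3+\lambda^2S^{4}+\dots$ from each dyadic block. A simpler route avoids the error-term bookkeeping altogether: count integer points in the box directly, which gives $\ll(\lambda/t+1)(\lambda t+1)(\lambda t^3+1)\ll\lambda^3T^4$ for $t\le T$. I only need some polynomial bound $X^{3/4}T^{O(1)}$; the paper's $T^3$ comes from volume $\lambda^3 S^3$ summed dyadically. Since $a=1$ is fixed, I would not need to remove reducible forms here; an overcount suffices.

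\textbf{Lower bound on the total count.} I would follow the approach in Remark~\ref{averaging method important}, which invokes Bhargava's averaging method. Monic forms $x^3+bx^2y+cxy^2+dy^3$ with $|\Disc|\asymp X$ and irreducible number $\asymp X^{5/6}$. The heuristic is that after the shift $x\mapsto x+uy$ one may take $b\in\{0,1,2\}$, and then $c\ll X^{1/3}$ and $d\ll X^{1/2}$, with a positive proportion of these satisfying the discriminant window. Concretely, I would count triples $(b,c,d)$ with $|b|\le 1$, $|c|\le \kappa X^{1/3}$ and $d\in[\kappa_1X^{1/2},\kappa_2X^{1/2}]$, where $\Disc\approx-27d^2$ lies in $[c_-X,c_+X)$ for suitable constants, and exclude reducible ones by Lemma~\ref{lem:reducible-fixed-P}. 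This lower bound handles the negative-discriminant case. For positive discriminant I would instead choose $c\approx-\kappa X^{1/3}$ with $4|c|^3>27d^2$, so that $\Disc\approx-4c^3-27d^2$ is positive, of size $\asymp X$, over a positive-proportion window.

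\textbf{Main obstacle.} The hard step is mapping each such monic integral form into $\mathcal Fv^{(i)}$ while preserving the condition $f(1,0)=1$. Reduction by $\gamma\in\GL_2(\mathbb Z)$ moves the solution $(1,0)$ to $(1,0)\gamma^{-1}$, which generally differs from $(1,0)$. The fix is that the count $N_P^{(i)}$ is over forms lying in $\mathcal Fv^{(i)}$, and a monic form with these coefficient sizes, namely $b=O(1)$, $c\asymp X^{1/3}$, $d\asymp X^{1/2}$, already has Iwasawa parameter $t\asymp\lambda^{1/3}\asymp X^{1/12}$ and $u\in[-\tfrac12,\tfrac12]$ after the unipotent shift. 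Any unipotent shift $n(u)$ with $u$ an integer fixes $(1,0)$ under the row-vector action. So these forms lie in $\mathcal Fv^{(i)}$ up to an upper-triangular integral adjustment that fixes $P$. I would verify this via the inverse of Lemma~\ref{lem:iwasawa-coefficients}: solve for $(u,t,k,\lambda)$ from the coefficients and check that $t\ge1$ is large, so that $\nu(t)=[-\tfrac12,\tfrac12]$. With $N_P^{(i)}(X)\gg X^{5/6}$ and $M_{P,T}^{(i)}\ll X^{3/4}T^{O(1)}$, the theorem follows.
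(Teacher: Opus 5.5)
Your proof is correct. The upper bound is the paper's Proposition~\ref{prop:compact-upper-e1}, or your cruder box count $\ll\lambda^3T^4$, which also suffices since $T^4X^{-1/12}\to0$ for fixed $T$. The lower bound $N_P^{(i)}(X)\gg X^{5/6}$, however, takes a genuinely different route.

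The paper uses Bhargava's averaging. It averages over $v'\in B_\varepsilon^{(i)}$ and uses Lemma~\ref{lem:leading-coefficient-stability} on $t>X^\theta$ to see that reduction preserves $a=\alpha$. It then removes $t\le X^\theta$ with Proposition~\ref{prop:compact-upper-e1} and integrates Schmidt counts over $t_*\le t\le c\lambda^{1/3}$.

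You instead exhibit an explicit family of $\asymp X^{5/6}$ monic forms and reduce them by hand. You use only the fact underlying the stability lemma: deep in the cusp, reduction needs only integral shifts $n(m)$, and these fix $(1,0)$. Your route is more elementary and self-contained. It needs no Haar-measure normalization, no change of variables, and no comparison between counts over $\mathcal Fv'$ and over $\mathcal Fv^{(i)}$. The paper's route treats the whole range $t\le c\lambda^{1/3}$ uniformly and is the standard framework for upgrading to asymptotics, which an ad hoc subfamily cannot do. Here, though, only a lower bound is needed.

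Two steps need a precise argument. First, Lemma~\ref{lem:iwasawa-coefficients} only bounds coefficients from above, so it cannot be ``inverted'' to give $t\ge1$. Instead, in the negative case set $s=d^{1/6}$ and write $f_0=a(s)(d^{1/2}I_2)\cdot h$ with $h=x^3+bd^{-1/3}x^2y+cd^{-2/3}xy^2+y^3$. In the positive case use $s=|c|^{1/4}$ and $|c|^{3/4}I_2$, giving $h=x^3+b|c|^{-1/2}x^2y-xy^2+d|c|^{-3/2}y^3$. With your ranges, $h$ lies in a fixed compact subset of $V_{\mathbb R}^{(i)}$. Hence $h=n(u_h)a(t_h)k_h\lambda_h\cdot v^{(i)}$ with bounded coordinates, and $a(s)n(x)=n(s^2x)a(s)$ gives Iwasawa coordinates $t=st_h\asymp X^{1/12}$ and $u=s^2u_h$.

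Second, that $u$-coordinate is in general unbounded (up to order $X^{1/6}$). So the reduced form $n(m)\cdot f_0$ may have $b$ of that size. This is harmless, but injectivity of $f_0\mapsto n(m)\cdot f_0$ must then come from $b\bmod 3$: the shift changes $b$ by $3m$, and you took $|b|\le1$. It cannot come from $b$ staying small.

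Finally, $n(m)=\left(\begin{smallmatrix}1&0\\m&1\end{smallmatrix}\right)$ is lower-triangular. An upper-triangular unipotent would send $(1,0)$ to $(1,m)$.
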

It suffices to show that 
\[
\lim_{T\to\infty}\limsup_{X\to\infty}
\left|
\frac{N_{P,T}^{(i)}(X)}{N_P^{(i)}(X)}-1
\right|=\lim_{T\to\infty}\limsup_{X\to\infty}
\left|
\frac{N^{(i)}_{P}(X)-N_{P,T}^{(i)}(X)}{N_P^{(i)}(X)}
\right|=0.
\]
Let $P=(\alpha,0)$ with $\alpha\in\{\pm1\}$. If
\[
f(X,Y)=aX^3+bX^2Y+cXY^2+dY^3,
\]
then $f(\alpha,0)=a\alpha$. Hence the condition $f(P)=1$ is equivalent to $a=\alpha$. Thus the exceptional families are exactly the slices of coefficient space with fixed leading coefficient $a=\alpha$.

\begin{remark}\label{averaging method important}
    For these families, a lower bound of order $X^{3/4}$ would
\textbf{not} suffice to prove escape of mass: our upper bound for the
count with $t\leq T$ is also of order $X^{3/4}$ for fixed $T$.
Bhargava's averaging method \cite{bhargava2010density} gives us  the stronger lower bound
$\gg X^{5/6}$ for the total count.
\end{remark}

We write $\mathcal F_T$ for the part of the fundamental domain with cusp parameter $t>T$, and $\mathcal F_{\leq T}$ for its complement. Define
\[
M_{(\alpha,0),T}^{(i)}(X)
=
\#\{f\in\mathcal F_{\leq T}v^{(i)}\cap V_{\mathbb Z}^{(i),\mathrm{irr}}:
a(f)=\alpha,\ c_-X\leq|\Disc(f)|<c_+X\}.
\]
Then
\[
N_{(\alpha,0)}^{(i)}(X)
=
N_{(\alpha,0),T}^{(i)}(X)
+
M_{(\alpha,0),T}^{(i)}(X).
\]

Recall that the affine lattice corresponding to the condition $a=\alpha$ is
\[
\mathcal L_{\alpha,0}
=
\{(a,b,c,d)\in\mathbb Z^4:a=\alpha\},
\]
and its underlying homogeneous lattice is
\[
\Lambda_{\alpha,0}
=
\{(a,b,c,d)\in\mathbb Z^4:a=0\}.
\]
Under the rescaling
\[
\eta_t(a,b,c,d)=(t^3a,tb,t^{-1}c,t^{-3}d),
\]
the lattice $\eta_t\Lambda_{\alpha,0}$ has orthogonal basis
\[
(0,t,0,0),
\qquad
(0,0,t^{-1},0),
\qquad
(0,0,0,t^{-3}),
\]
and hence
\[
d(\eta_t\Lambda_{\alpha,0})=t^{-3}.
\]
This reversal of the covolume growth is the \textit{source} of escape of mass.

\begin{proposition}[Upper bound for the numerator complement]\label{prop:compact-upper-e1}
For every $T>t_0$ and every $\varepsilon>0$,
\[
M_{(\alpha,0),T}^{(i)}(X)
\ll
X^{3/4}T^3
+
X^{1/2}T^4
+
X^{1/2+\varepsilon}.
\]
\end{proposition}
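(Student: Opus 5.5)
We bound $M_{(\alpha,0),T}^{(i)}(X)$ in the same way as in Proposition~\ref{prop:cusp-upper-fixed-P}, now on the compact part $t_0\leq t\leq T$. First, Lemma~\ref{lem:reducible-fixed-P} applies, since here $a=\alpha\neq0$. It lets us drop the irreducibility condition at a cost of $O(X^{1/2+\varepsilon})$, so it suffices to count all integral forms $f=n(u)a(t)k\lambda\cdot v^{(i)}$ with $t_0\leq t\leq T$, $\lambda\asymp X^{1/4}$ and $a(f)=\alpha$. Next, split $[t_0,T]$ dyadically into ranges $S\leq t<2S$ with $S=2^{-j}T$. There are $O(\log T+1)$ such ranges, a number independent of $X$. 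On each range, Lemma~\ref{lem:iwasawa-coefficients} confines the coefficient vectors to a box with sides $\ll\lambda/S^3,\lambda/S,\lambda S,\lambda S^3$. After applying $\eta_S$, this box lies in a ball of radius $O(\lambda)$. We then translate the affine slice $\mathcal L_{\alpha,0}$ to $\Lambda_{\alpha,0}$ using Lemma~\ref{lem:affine-translate} and Remark~\ref{constant affects}; here the translate $q_0=(\alpha,0,0,0)$ maps to $(\alpha S^3,0,0,0)$, which is harmless for $S\leq T$ fixed.

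Then apply Schmidt's Theorem~\ref{thm:schmidt} in the three-dimensional span $E=\{a=0\}$, with the lattice $\eta_S\Lambda_{\alpha,0}$. This lattice has orthogonal basis $(0,S,0,0),(0,0,S^{-1},0),(0,0,0,S^{-3})$ and covolume $S^{-3}$. The region is the slice of the image at fixed first coordinate: it is a semialgebraic set, so its narrow class is bounded, and it lies in a ball of radius $O(\lambda)$. Its three-dimensional volume is $\ll\lambda^3$, so the main term is $\ll\lambda^3S^3$. The successive minima are $S^{-3}\leq S^{-1}\leq S$, in increasing order. Hence the error terms are
\[
\max\Bigl\{1,\ \frac{\lambda}{S^{-3}},\ \frac{\lambda^2}{S^{-3}S^{-1}}\Bigr\}\ll \lambda S^3+\lambda^2S^4 .
\]
For $S\leq T$ this gives, per range,
\[
\ll \lambda^3S^3+\lambda^2S^4+\lambda S^3+1 .
\]

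Summing over the dyadic ranges gives geometric sums dominated by $S=T$: the total is $\ll\lambda^3T^3+\lambda^2T^4+\lambda T^3$. With $\lambda\asymp X^{1/4}$, the term $\lambda T^3$ is absorbed by the first term, and the bound $X^{3/4}T^3+X^{1/2}T^4+X^{1/2+\varepsilon}$ follows. For $t$ near $t_0$, where $\nu(t)$ is not the full interval, the region only shrinks, so the upper bound is unaffected.

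The main obstacle is verifying the hypotheses of Schmidt's theorem uniformly. This means checking the narrow-class property of the slice $\eta_S(\text{region})\cap\{a=\alpha S^3\}$, which includes its projections, with a bound independent of $S$ and $X$. We would argue this by semialgebraicity: the region is cut out by polynomial inequalities of bounded degree in $(u,t,k,\lambda)$ after eliminating the group parameters, so Milnor--Thom-type bounds control the number of connected components of lines and of projections. A second, smaller, point is to treat the fundamental multiset multiplicities of Proposition~\ref{prop:fundamental-multiplicity}, which contribute at most a factor $n_i$.
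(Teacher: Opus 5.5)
Your proposal is correct and follows essentially the same route as the paper. Both split $t_0\le t\le T$ dyadically, rescale by $\eta_S$ into a ball of radius $O(\lambda)$, use the lattice $\eta_S\Lambda_{\alpha,0}$ of covolume $S^{-3}$, apply Schmidt's theorem to get $\lambda^3S^3+\lambda^2S^4+1$ per range (your extra $\lambda S^3$ term is absorbed by $\lambda^3S^3$), and sum geometrically with the sum dominated by $S\asymp T$. One small correction: the translate $(\alpha S^3,0,0,0)$ is harmless uniformly in $T$, not only for fixed $T$, because the slice $a=\alpha$ is empty unless $S^3\ll\lambda$; this matters because the paper later applies the proposition with $T=X^\theta$.
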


\begin{proof}
By Lemma~\ref{lem:reducible-fixed-P}, it is enough to count all integral forms with $a=\alpha$, at the cost of $O(X^{1/2+\varepsilon})$. Recall that $\lambda\asymp X^{1/4}$. Since $a=\alpha\neq0$, Lemma~\ref{lem:iwasawa-coefficients} gives
\[
1\ll \frac{\lambda}{t^3},
\qquad\text{and hence}\qquad
t\ll\lambda^{1/3}.
\]
We have that $t\leq T$, so we cover the contributing range by dyadic intervals $S_j\leq t<2S_j$, where $S_j=t_0\,2^j$ for $0\leq j\leq J$, with $S_J\leq T$ and $J\ll\log(X)$. For each $S=S_j$, let $\mathcal M_S(\alpha;X)$ be the set of forms $f\in V_{\mathbb Z}^{(i)}$ admitting a representation $f=n(u)a(t)k\lambda\cdot v^{(i)}$ in the fundamental domain with
\[
S\leq t<2S,
\qquad
c_-X\leq|\Disc(f)|<c_+X,
\qquad
a(f)=\alpha.
\]
The coefficient bounds in this range are
\[
|a|\ll\frac{\lambda}{S^3},
\qquad
|b|\ll\frac{\lambda}{S},
\qquad
|c|\ll\lambda S,
\qquad
|d|\ll\lambda S^3.
\]
Recall that, by Lemma~\ref{constant affects}, we can translate $\mathcal{L}_{\alpha,0}$ to $\Lambda_{\alpha,0}$ and that  
\[
\eta_S(a,b,c,d)
=
(S^3a,Sb,S^{-1}c,S^{-3}d).
\]
Applying the invertible linear map $\eta_S$ to both the above region and the lattice $\Lambda_{\alpha,0}$, we get that the transformed region has side lengths
\[
\ll
\lambda,
\quad
\lambda,
\quad
\lambda ,
\quad
\lambda .
\]
Thus, it is contained in a ball of radius $O(\lambda)$. Moreover, by Lemma~\ref{lem:transformed-lattice-fixed-P}, the
lattice
\[
\eta_S\Lambda_{\alpha,0}
\]
has covolume
$$S^{-3}.$$
After computing all the orthogonal projections  using Lemma \ref{lem:lattice-fixed-P} for the lattice  $\eta_t\Lambda_{\alpha,0}$ and applying Schmidt's theorem, we get\footnote{We use three-dimensional Hausdorff measure to measure
volume in the three-dimensional affine subspace of $\mathbb R^4$;
see \cite{widmer2012lipschitz} for more details. Similarly, the volumes of
the projections appearing in the error term are measured using
Hausdorff measure of the corresponding dimension. For background
on Hausdorff measure, see \cite{evans2025measure}.}
\[
\# M_S(\alpha;X)
\ll \lambda^3S^3+\lambda^2S^4+1.
\]
For $r\in \{3,4\}$, the geometric sum satisfies
\[
\sum_{j=0}^{J}S_j^r
=
t_0^r\frac{2^{r(J+1)}-1}{2^r-1}
\ll S_J^r
\leq T^r.
\]
Thus, summing the dyadic bounds yields
\[
M_{(\alpha,0),T}^{(i)}(X)
\ll
\lambda^3T^3+\lambda^2T^4+\log(2+X)
+X^{1/2+\varepsilon}.
\]
Using $\lambda\asymp X^{1/4}$ and absorbing the logarithmic term gives
\[
M_{(\alpha,0),T}^{(i)}(X)
\ll
X^{3/4}T^3+X^{1/2}T^4+X^{1/2+\varepsilon}.
\]
\end{proof}

As explained in Remark~\ref{averaging method important}, direct
counting in the fundamental domain does not provide a sufficiently
strong lower bound. The following lemma allows us to apply Bhargava's
averaging method when $P=(\pm1,0)$; the same argument does not
extend to other primitive vectors $P$.
\begin{lemma}[Stability of the leading coefficient in the cusp]
\label{lem:leading-coefficient-stability}
Let $\alpha\in\{\pm1\}$ and let $t>X^\theta$, where $\theta>0$. Suppose
\[
f'=\lambda n(u)a(t)k\gamma v,
\]
where $k\in K$, $\gamma$ is sufficiently close to $I_2$, and $v'=\gamma v$ lies in a sufficiently small ball around $v$. Let $f=(f')_{\mathrm{red}}$. Then, for all sufficiently large $X$,
\[
f(\alpha,0)=1
\qquad\Longleftrightarrow\qquad
f'(\alpha,0)=1.
\]
Equivalently,
\[
a(f)=\alpha
\qquad\Longleftrightarrow\qquad
a(f')=\alpha.
\]
\end{lemma}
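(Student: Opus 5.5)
The plan is to make the reduction step explicit and check that it cannot change the leading coefficient. Write $g'=\lambda n(u)a(t)k\gamma$. Here $f'=g'\cdot v$ is generally not reduced, because $k\gamma$ need not lie in $K$. Using the Iwasawa decomposition of $k\gamma$, we write $k\gamma=n(u_1)a(t_1)k_1\lambda_1$ with $k_1\in K$. Since $\gamma$ is close to $I_2$, the parameters satisfy $|u_1|\ll\|\gamma-I_2\|$, $t_1=1+O(\|\gamma-I_2\|)$ and $\lambda_1=1+O(\|\gamma-I_2\|)$. Then
\[
n(u)a(t)n(u_1)a(t_1)=n(u+t^{-2}u_1)\,a(tt_1).
\]
This follows from $a(t)n(u_1)a(t)^{-1}=n(t^{-2}u_1)$ in the lower-triangular convention used here. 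So $f'=n(u')a(t')k_1\lambda'\cdot v$, where $t'=tt_1\asymp t$ is still huge and $u'=u+O(t^{-2})$. The only reason $f'$ may fail to be reduced is that $u'$ can leave $[-\tfrac12,\tfrac12]$. For $t'\geq1$ the region $\nu(t')$ is all of $[-\tfrac12,\tfrac12]$. Hence reduction consists only of an integral unipotent translation $n(m)$ with $m\in\mathbb Z$ and $|m|\leq1$, possibly composed with the finite stabilizer issues coming from Proposition~\ref{prop:fundamental-multiplicity}. The key point is that no inversion $\left(\begin{smallmatrix}0&1\\-1&0\end{smallmatrix}\right)$ is needed, because $t'>X^\theta\gg t_0$ places us deep inside the part of $\mathcal F$ where the $\GL_2(\mathbb Z)$-translates are just horizontal shifts, together with $\pm I_2$ and the diagonal reflection $\operatorname{diag}(1,-1)$, which acts as $u\mapsto -u$.

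Next, I would check directly how these elements act on the leading coefficient. The matrix $n(m)$ sends $f(X,Y)$ to $f(X+mY,Y)$, so $f(1,0)$ is unchanged. For the signed diagonal matrices $\operatorname{diag}(\pm1,\pm1)$ under the twisted action, we get $(\gamma\cdot f)(1,0)=\det(\gamma)^{-1}f(\pm1,0)$. This gives $a\mapsto\pm a$, which preserves the set $\{f:a(f)=\alpha\}$ up to replacing $\alpha$ by $-\alpha$, that is, $P$ by $-P$. The paper already reduces to $f(P)=1$ via $f(-P)=-f(P)$, so after fixing conventions this gives $a(f)=\alpha\iff a(f')=\alpha$. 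The equivalence with $f(\alpha,0)=1$ is then $f(\alpha,0)=\alpha^3a=\alpha a$, which equals $1$ exactly when $a=\alpha$.

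The main obstacle is to justify rigorously that the reducing element lies in the Borel-type subgroup of $\GL_2(\mathbb Z)$ generated by $n(\mathbb Z)$ and the signed diagonal matrices. I would argue as follows. If $\delta=\left(\begin{smallmatrix}p&q\\r&s\end{smallmatrix}\right)\in\GL_2(\mathbb Z)$ has $q\neq0$ (in the row-vector convention, the entry that mixes $Y$ into the $X$-slot), then $\delta\,n(u')a(t')$ has the $t$-parameter of its Iwasawa decomposition equal to roughly $1/(t'|q|)$. This follows from the standard formula $\Im(\delta z)=\Im z/|cz+d|^2$ with $\Im z\asymp t'^{2}$. That value is $<t_0$ once $t'$ is large, so $\delta\,n(u')a(t')$ cannot lie in $\mathcal F$. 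Hence $q=0$, and $\delta$ is lower triangular with unit diagonal entries, as required. The threshold "for all sufficiently large $X$" enters only to guarantee $t'>X^\theta t_1>\sqrt{3}$, which comfortably exceeds the height at which the modular domain has no nontrivial identifications beyond translations.
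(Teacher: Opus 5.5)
Your key step is sound, and it is more rigorous than the paper's own argument. Write $t(g)$ for the $A$-coordinate of $g$; it can be read off the first row as $t(g)^2=|\det g|/\|\mathrm{row}_1(g)\|^2$. This gives $t(\delta g')^2\le 1/(q^2t'^2)$, so every $\delta\in\GL_2(\mathbb Z)$ with $\delta g'\in\mathcal F$ has $q=0$. The paper only exhibits one reducing element $n(m)$ and never rules out others.

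There is, however, an error in your first step. With $a(t)=\operatorname{diag}(t^{-1},t)$ and lower-triangular $n(u)$, one has $a(t)n(u_1)a(t)^{-1}=n(t^{2}u_1)$, not $n(t^{-2}u_1)$. So $u'=u+t^2u_1$, which can be as large as $t^2\|\gamma-I_2\|$ and is unbounded since $t>X^\theta$. Your claims $u'=u+O(t^{-2})$ and $|m|\le1$ are therefore false. This is not fatal: $(1,0)n(m)=(1,0)$ for every $m\in\mathbb Z$, and your $q=0$ argument does not use the size of $u'$. But the picture that the perturbation barely moves $u$ should be dropped.

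The genuine gap is the sign. Your argument only gives $a(f)=s\,a(f')$ with $s=\delta_{22}\in\{\pm1\}$. When $s=-1$ the stated equivalence $a(f)=\alpha\iff a(f')=\alpha$ fails outright: you get $a(f')=\alpha\iff a(f)=-\alpha$. Replacing $P$ by $-P$ changes the statement rather than proving it, and ``after fixing conventions'' is not an argument.

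The missing step is as follows.
\begin{itemize}
\item Since $\det g'>0$ and $\mathcal F\subset\GL_2^+(\mathbb R)$, you have $\det\delta=1$, so $\delta=\pm n(m)$. In particular $\operatorname{diag}(1,-1)$ never occurs.
\item Both signs really do occur. Because $-I\in K$, the element $-n(-m)g'=n(u'-m)a(t')(-\widetilde k)\lambda'$ lies in $\mathcal F$ just as $n(-m)g'$ does. Meanwhile $-I$ acts on forms by $f\mapsto -f$.
\item Hence the lemma holds exactly for the representative $(f')_{\mathrm{red}}=n(-m)\cdot f'$, with $m$ the integer nearest $u'$. This is the representative the paper constructs. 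It is false for the other one.
\end{itemize}
You must fix this choice explicitly; once you do, your proof is complete.
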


\begin{proof}
Move $\gamma$ past $k$ and write
\[
k\gamma=\gamma'k,
\qquad
\gamma'=k\gamma k^{-1}.
\]
Since $k\in K$ and $\gamma$ is close to $I_2$, the element $\gamma'$ is also close to $I_2$. Write its Iwasawa decomposition as
\[
\gamma'=n(x)a(s)k'',
\]
where $s$ is close to $1$. Thus
\[
f'
=
\lambda n(u)a(t)n(x)a(s)k''kv.
\]
Using
\[
a(t)n(x)=n(t^2x)a(t),
\]
we get
\[
f'=\lambda n(u+t^2x)a(ts)\widetilde k v,
\qquad
\widetilde k=k''k\in K.
\]
Since $s$ is bounded below by a positive constant and $t>X^\theta$, the new $A$-coordinate $ts$ tends to infinity with $X$. Thus, for sufficiently large $X$, reducing $f'$ only requires translating the lower-unipotent coordinate back into the fixed interval defining the fundamental domain. Hence the reduced form is obtained from $f'$ by the action of some
\[
\ell=n(m)=
\begin{pmatrix}
1&0\\
m&1
\end{pmatrix},
\qquad
m\in\mathbb Z.
\]
Therefore $f=\ell\cdot f'$. Since $\det\ell=1$,
\[
f(\alpha,0)
=
(\ell\cdot f')(\alpha,0)
=
f'((\alpha,0)\ell).
\]
But
\[
(\alpha,0)\ell=(\alpha,0),
\]
so
\[
f(\alpha,0)=f'(\alpha,0).
\]
Finally, because $\alpha=\pm1$, the condition $h(\alpha,0)=1$ is equivalent to $a(h)=\alpha$ for any binary cubic form $h$. This proves the claim.
\end{proof}

\subsubsection*{Reduction to an averaged count}

Fix a sufficiently small ball
\[
B_\varepsilon^{(i)}
=
B(v_i,\varepsilon)\cap V_{\mathbb R}^{(i)}
\]
and write
\[
\mu_i(B_\varepsilon^{(i)})
=
\int_{B_\varepsilon^{(i)}}|\Disc(v)|^{-1}\,dv.
\]
For $\alpha\in\{\pm1\}$, put
\[
S_\alpha^{(i)}
=
\{f\in V_{\mathbb Z}^{(i)}:a(f)=\alpha\}.
\]
Define
\[
\mathcal A_\alpha^{(i)}(X)
=
\frac{1}{\mu_i(B_\varepsilon^{(i)})}
\int_{v'\in B_\varepsilon^{(i)}}
\#\left\{
f'\in \mathcal Fv'
\cap V_{\mathbb Z}^{(i),\mathrm{irr}}:
c_-X\leq|\Disc(f')|<c_+X,\
a(f')=\alpha
\right\}
|\Disc(v')|^{-1}\,dv'.
\]

\begin{proposition}[Reduction to the averaged count]\label{prop:denominator-reduction-e1}
Fix $0<\theta<1/36$. Then
\[
N_{(\alpha,0)}^{(i)}(X)
\gg
\mathcal A_\alpha^{(i)}(X)
-
O\left(
X^{3/4+3\theta}
+
X^{1/2+4\theta}
+
X^{1/2+\varepsilon}
\right).
\]
\end{proposition}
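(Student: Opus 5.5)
We split the averaged count according to the size of the cusp parameter $t$ of each representative, at the threshold $X^\theta$. For each $v'\in B_\varepsilon^{(i)}$, write the inner count in $\mathcal A_\alpha^{(i)}(X)$ as the part with $t\le X^\theta$ plus the part with $t>X^\theta$.

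\textbf{The region $t\le X^\theta$.} Here we repeat the argument of Proposition~\ref{prop:compact-upper-e1} with $v^{(i)}$ replaced by $v'$ and $T=X^\theta$. Lemmas~\ref{lem:iwasawa-coefficients} and~\ref{lem:reducible-fixed-P} are uniform for $v'$ in the compact set $\overline{B_\varepsilon^{(i)}}$, so the dyadic Schmidt count still applies. It bounds this part by $O(X^{3/4+3\theta}+X^{1/2+4\theta}+X^{1/2+\varepsilon})$ uniformly in $v'$. Averaging against the probability measure $|\Disc(v')|^{-1}dv'/\mu_i(B_\varepsilon^{(i)})$ preserves this bound. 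This produces exactly the error term in the statement.

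\textbf{The region $t>X^\theta$.} Take $f'=n(u)a(t)k\lambda\cdot v'$ with $t>X^\theta$ and $a(f')=\alpha$. Write $v'=\gamma v^{(i)}$ with $\gamma$ close to $I_2$. Such a $\gamma$ exists and can be chosen continuously, because $\GL_2(\mathbb R)\to V_{\mathbb R}^{(i)}$, $g\mapsto g v^{(i)}$, is a local diffeomorphism near $I_2$ with finite stabilizer. Lemma~\ref{lem:leading-coefficient-stability} then says that the reduction $f=(f')_{\mathrm{red}}\in\mathcal Fv^{(i)}$ is obtained by some $n(m)$ with $m\in\mathbb Z$. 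Hence $f$ still has $a(f)=\alpha$. It is irreducible, since irreducibility is a $\GL_2(\mathbb Z)$-invariant, and it has the same discriminant. So $f$ is counted in $N^{(i)}_{(\alpha,0)}(X)$, and its cusp parameter $ts$ also exceeds $X^\theta/2$.

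We must also check that this assignment has bounded fibres. The forms $f'$ and $f$ lie in the same $\GL_2(\mathbb Z)$-orbit. By Proposition~\ref{prop:fundamental-multiplicity}, applied to both $\mathcal Fv'$ and $\mathcal Fv^{(i)}$, each orbit is represented at most $n_i$ times in each multiset. Since $f=n(m)f'$, the integral form $f'$ is determined by $f$ together with $m$, so we must control how many $m$ can occur. The $A$-coordinates $t$ and $ts$ differ by a bounded factor, and $u+t^2x$ must be translated back into $[-\tfrac12,\tfrac12]$. Thus, for fixed $f$, the admissible $f'$ correspond to the finitely many $\gamma$ (stabilizer cosets) together with a unique $m$. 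Therefore, for each $v'$, the $t>X^\theta$ part of the inner count is at most $n_i\,N^{(i)}_{(\alpha,0)}(X)$.

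Averaging over $v'$ then gives
\[
\mathcal A_\alpha^{(i)}(X)\le n_i\,N^{(i)}_{(\alpha,0)}(X)+O\left(X^{3/4+3\theta}+X^{1/2+4\theta}+X^{1/2+\varepsilon}\right),
\]
which is the claim. The condition $\theta<1/36$ is not needed for this inequality itself. It only ensures, for later use, that the error $X^{3/4+3\theta}$ is $o(X^{5/6})$, below the averaged main term. We expect the main obstacle to be the fibre-counting step: it requires a careful treatment of the translation $u\mapsto u+t^2x$ and a verification that $\gamma$, hence $x$, is determined up to finitely many choices. If this becomes delicate, we would instead fix $v'$ and argue orbit by orbit, using Proposition~\ref{prop:fundamental-multiplicity} directly on the orbit of $f'$.
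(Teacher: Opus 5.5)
Your proposal is correct and follows the paper's argument, run in the opposite direction. The paper lower-bounds $N_{(\alpha,0)}^{(i)}(X)$ by the count over $\mathcal F_{X^\theta}v'$ and uses Lemma~\ref{lem:leading-coefficient-stability} to replace $a(f)=\alpha$ by $a(f')=\alpha$. It then writes this as the full count over $\mathcal Fv'$ minus the part with $t\le X^\theta$, bounded by Proposition~\ref{prop:compact-upper-e1} with $T=X^\theta$, and averages over $v'$, with multiplicities handled by Proposition~\ref{prop:fundamental-multiplicity}; this matches your split. Your orbit-multiplicity bound already settles the fibre step, so the extra discussion of how many $m$ can occur is unnecessary. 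Indeed, for fixed $\gamma$, sending $g\in\mathcal F_{X^\theta}$ to the representative $n(m)g\gamma\in\mathcal F$ of $\GL_2(\mathbb Z)g\gamma$ is injective, because right multiplication by $\gamma$ permutes the cosets in $\GL_2(\mathbb Z)\backslash\GL_2(\mathbb R)$, so you even get the bound without the factor $n_i$.
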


\begin{proof}
Let $v'\in B_\varepsilon^{(i)}$, and let $f=(f')_{\mathrm{red}}$ be the reduced representative of $f'$. By Proposition~\ref{prop:fundamental-multiplicity}, replacing the count over $\mathcal Fv^{(i)}$ by the corresponding count over $\mathcal Fv'$ changes the count by at most a fixed multiplicative constant. Hence
\[
N_{(\alpha,0)}^{(i)}(X)
\gg
\#\left\{
f'\in
\mathcal Fv'
\cap V_{\mathbb Z}^{(i),\mathrm{irr}}:
c_-X\leq|\Disc(f')|<c_+X,\
a(f)=\alpha,\
f=(f')_{\mathrm{red}}
\right\}.
\]

We now restrict to the high-cusp region $\mathcal F_{X^\theta}v'$. Since $\mathcal F_{X^\theta}\subseteq\mathcal F$, we have
\[
N_{(\alpha,0)}^{(i)}(X)
\gg
\#\left\{
f'\in
\mathcal F_{X^\theta}v'
\cap V_{\mathbb Z}^{(i),\mathrm{irr}}:
c_-X\leq|\Disc(f')|<c_+X,\
a(f)=\alpha,\
f=(f')_{\mathrm{red}}
\right\}.
\]
Every form in this count has cusp parameter $t>X^\theta$, so Lemma~\ref{lem:leading-coefficient-stability} applies. Therefore, for all sufficiently large $X$,
\[
a(f)=\alpha
\qquad\Longleftrightarrow\qquad
a(f')=\alpha.
\]
Thus
\[
N_{(\alpha,0)}^{(i)}(X)
\gg
\#\left\{
f'\in
\mathcal F_{X^\theta}v'
\cap V_{\mathbb Z}^{(i),\mathrm{irr}}:
c_-X\leq|\Disc(f')|<c_+X,\
a(f')=\alpha
\right\}.
\]

We write this high-cusp count as the full count minus the contribution from its complement:
\begin{align*}
&\#\left\{
f'\in
\mathcal F_{X^\theta}v'
\cap V_{\mathbb Z}^{(i),\mathrm{irr}}:
c_-X\leq|\Disc(f')|<c_+X,\
a(f')=\alpha
\right\} \\
&\qquad =
\#\left\{
f'\in
\mathcal Fv'
\cap V_{\mathbb Z}^{(i),\mathrm{irr}}:
c_-X\leq|\Disc(f')|<c_+X,\
a(f')=\alpha
\right\} \\
&\qquad\quad -
\#\left\{
f'\in
\mathcal F_{\leq X^\theta}v'
\cap V_{\mathbb Z}^{(i),\mathrm{irr}}:
c_-X\leq|\Disc(f')|<c_+X,\
a(f')=\alpha
\right\}.
\end{align*}
Now Proposition~\ref{prop:compact-upper-e1} applies to the second term. Taking $T=X^\theta$, we get
\[
\#\left\{
f'\in
\mathcal F_{\leq X^\theta}v'
\cap V_{\mathbb Z}^{(i),\mathrm{irr}}:
c_-X\leq|\Disc(f')|<c_+X,\
a(f')=\alpha
\right\}
\ll
X^{3/4+3\theta}
+
X^{1/2+4\theta}
+
X^{1/2+\varepsilon}.
\]
Therefore
\[
N_{(\alpha,0)}^{(i)}(X)
\gg
\#\left\{
f'\in
\mathcal Fv'
\cap V_{\mathbb Z}^{(i),\mathrm{irr}}:
c_-X\leq|\Disc(f')|<c_+X,\
a(f')=\alpha
\right\}
-
O\left(
X^{3/4+3\theta}
+
X^{1/2+4\theta}
+
X^{1/2+\varepsilon}
\right).
\]
Averaging this inequality over $v'\in B_\varepsilon^{(i)}$ with respect to the measure $|\Disc(v')|^{-1}\,dv'$ gives
\[
N_{(\alpha,0)}^{(i)}(X)
\gg
\mathcal A_\alpha^{(i)}(X)
-
O\left(
X^{3/4+3\theta}
+
X^{1/2+4\theta}
+
X^{1/2+\varepsilon}
\right).
\]
\end{proof}

Thus it remains to prove the lower bound
$\mathcal A_\alpha^{(i)}(X)\gg X^{5/6}$.
This is the only point where we use Bhargava's averaging method.

\subsubsection*{Preliminaries for the averaging method}

Let $dv$ denote Euclidean measure on $V_{\mathbb R}$, normalized so that $V_{\mathbb Z}$ has covolume $1$. We use the Haar measure
\[
dg=t^{-2}\,dn\,d^\times t\,dk\,d^\times\lambda
\]
on $\GL_2(\mathbb R)$ in Iwasawa coordinates.

\begin{proposition}[Change of variables]\label{prop:change-variables}
Let $i\in\{0,1\}$, let $\phi\in C_0(V_{\mathbb R}^{(i)})$, and let $v_i\in V_{\mathbb R}^{(i)}$. Then
\[
\int_{\GL_2(\mathbb R)}\phi(g\cdot v_i)\,dg
=
\frac{1}{2\pi}
\int_{\GL_2(\mathbb R)\cdot v_i}
\phi(v)|\Disc(v)|^{-1}\,dv.
\]
Equivalently,
\[
\int_{\GL_2(\mathbb R)}\phi(g\cdot v_i)\,dg
=
\frac{n_i}{2\pi}
\int_{V_{\mathbb R}^{(i)}}\phi(v)|\Disc(v)|^{-1}\,dv,
\]
where $n_i=\#\operatorname{Stab}_{\GL_2(\mathbb R)}(v_i)$.
\end{proposition}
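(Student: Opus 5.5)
The plan is to compare two $\GL_2(\mathbb R)$-invariant measures on the open orbit $V_{\mathbb R}^{(i)}=\GL_2(\mathbb R)\cdot v_i$, and then pin down the constant relating them by a Jacobian computation at the identity.

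\textbf{Invariance.} First I check that $|\Disc(v)|^{-1}\,dv$ is $\GL_2(\mathbb R)$-invariant. The linear map $v\mapsto g\cdot v$ on $V_{\mathbb R}$ has determinant $\det(g)^2$. It is $1$ on $\SL_2(\mathbb R)$, and the scalar $\lambda I_2$ acts by $\lambda^{3}/\lambda^{2}=\lambda$ on a $4$-dimensional space, giving $\lambda^4=\det(\lambda I_2)^2$. Since $\Disc(g\cdot v)=\det(g)^2\Disc(v)$, the two factors cancel.

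\textbf{Reduction to the identity.} The orbit map $\pi:g\mapsto g\cdot v_i$ is a local diffeomorphism. Its fibres are cosets of the finite stabilizer, which has order $n_i$. The pushforward $\pi_*(dg)$ is left-invariant, because $dg$ is a Haar measure and $\GL_2(\mathbb R)$ is unimodular. By uniqueness of invariant measures on the homogeneous space $\GL_2(\mathbb R)/\Stab(v_i)$, we get
\[
\int_{\GL_2(\mathbb R)}\phi(g\cdot v_i)\,dg=C\int_{V^{(i)}_{\mathbb R}}\phi(v)\,|\Disc(v)|^{-1}\,dv
\]
for some constant $C$, with each point of the orbit counted $n_i$ times on the left. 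In the first displayed formula, the integral over the orbit is understood with this fibre multiplicity. The second formula is the same statement with the multiplicity made explicit. It therefore suffices to show $C=n_i/(2\pi)$, which amounts to a single local density computation at $g=I_2$. Near the identity, $\pi$ is injective, so
\[
C=\frac{n_i\,|\Disc(v_i)|}{|\operatorname{Jac}_{I_2}\pi|},
\]
where the Jacobian is taken with respect to the coordinates in which $dg$ is Lebesgue measure.

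\textbf{The Jacobian.} Near $I_2$, write $g=n(u)a(e^{s})k(\theta)e^{r}$, where $k(\theta)$ is rotation by $\theta$. In these coordinates $dg=e^{-2s}\,du\,ds\,\frac{d\theta}{2\pi}\,dr$, since $dk$ has total mass $1$ and hence $dk=d\theta/2\pi$. At the identity this is $\frac{1}{2\pi}\,du\,ds\,d\theta\,dr$. The tangent vectors of the four coordinate directions are
\[
E_{21},\qquad \operatorname{diag}(-1,1),\qquad E_{12}-E_{21},\qquad I_2
\]
in $\mathfrak{gl}_2$, up to the sign convention for $k$. Hence $|\operatorname{Jac}_{I_2}\pi|=\frac{1}{2\pi}|D(v_i)|$, where $D(v)$ is the determinant of the linear map $\mathfrak{gl}_2\to V_{\mathbb R}$, $X\mapsto X\cdot v$, evaluated on this basis and measured against the standard basis $(a,b,c,d)$ of $V_{\mathbb R}$.

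Differentiating the twisted action gives explicit formulas:
\begin{itemize}
\item $E_{21}$ acts by $y\,\partial_x$;
\item $\operatorname{diag}(-1,1)$ acts by $-x\partial_x+y\partial_y$;
\item $E_{12}$ acts by $x\partial_y$;
\item $I_2$ acts by $(\deg-2)=1$.
\end{itemize}
The trace correction from $\det^{-1}$ affects only the scalar direction. Thus $D(v)$ is a degree-$4$ polynomial in $(a,b,c,d)$.

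$D(v)$ is a relative invariant of weight $\det^2$: conjugation by $g$ preserves the Lie-algebra basis up to a unimodular change (the $\mathfrak{sl}_2$ part has adjoint determinant $1$). So $D=\kappa\Disc$ for a constant $\kappa$. I will determine $\kappa$ by evaluating $D$ on one convenient form, for instance $v=x^2y$ (if it is degenerate for this basis, then $v=xy(x-y)$ or $x^3+y^3$), directly as a $4\times4$ determinant. The expected outcome is $|\kappa|=1$. This gives $|\operatorname{Jac}_{I_2}\pi|=|\Disc(v_i)|/2\pi$, hence $C=n_i/(2\pi)$, which is the claim.

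The main obstacle is this exact determinant. The identity hinges on getting precisely $|\kappa|=1$, with no stray factor of $2$ or $3$. I will therefore carry out the $4\times4$ computation carefully, tracking the $\det^{-1}$ twist and the orientation and scale of the $u$, $s$, $\theta$ coordinates.
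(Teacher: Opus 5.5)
Your skeleton (invariance of $|\Disc(v)|^{-1}dv$, uniqueness of invariant measures, reduction to the Jacobian at $I_2$) is the same as the paper's. The paper's proof does not compute the constant; it simply asserts that ``the orbit map contributes the factor $1/(2\pi)$''. Everything therefore rests on the computation you defer, and it does not produce $1/(2\pi)$, for two reasons.

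First, your $2\pi$ is inverted. At $I_2$ you have $dg=\frac{1}{2\pi}\,du\,ds\,d\theta\,dr$ and $dv=|D(v_i)|\,du\,ds\,d\theta\,dr$. Hence $dv=2\pi|D(v_i)|\,dg$, i.e.\ $|\operatorname{Jac}_{I_2}\pi|=2\pi|D(v_i)|$. With your value $\frac{1}{2\pi}|D(v_i)|$ and $|\kappa|=1$, your own formula would give $C=2\pi n_i$, so the final ``hence'' does not follow.

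Second, and decisively, $|\kappa|=2$, not $1$. In the standard basis $E_{11},E_{12},E_{21},E_{22}$ of $\mathfrak{gl}_2$, the map $X\mapsto X\cdot v$ has images $(2a,b,0,-d)$, $(b,2c,3d,0)$, $(0,3a,2b,c)$, $(-a,0,c,2d)$. This determinant is exactly $\Disc(v)$: it equals $1$ at $xy(x-y)$ and $-27$ at $x^3+y^3$. Your basis $E_{21},\operatorname{diag}(-1,1),E_{12}-E_{21},I_2$ differs from the standard one by a change of basis of determinant $-2$. The reason is that $\operatorname{diag}(-1,1)$ and $I_2$ span an index-$2$ sublattice of the diagonal; equivalently, $d^\times x_{11}\,d^\times x_{22}=2\,d^\times t\,d^\times\lambda$. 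Hence $D=-2\Disc$. Directly, $D(xy(x-y))=-2$ and $D(x^3+y^3)=54$.

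Consequently, with the paper's normalization ($dk$ of mass $1$ on $\SO_2(\mathbb R)$, $dv$ Lebesgue on $(a,b,c,d)$), one has $dg=\frac{1}{4\pi}|\det g|^{-2}\prod dx_{ij}$. Each sheet of the orbit map then carries $\frac{1}{4\pi}|\Disc(v)|^{-1}dv$. Your argument, carried out correctly, proves the identity with $n_i/(4\pi)$ in place of $n_i/(2\pi)$. So the step your proof hinges on fails: the stated constant is off by a factor of $2$ for this normalization. It would hold only if $dk$ had mass $2$ on $\SO_2(\mathbb R)$, i.e.\ mass $1$ on $\SO_2(\mathbb R)/\{\pm1\}$.

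As a consistency check, with $1/(4\pi)$ the multiset $\mathcal Fv_i\cap\{|\Disc|<X\}$ has volume $4\pi\cdot\frac{\pi}{6}\cdot\frac{X}{4}=\frac{\pi^2}{6}X$. Since $\mathcal F$ contains both $g$ and $-g=(-I)g$, it double covers $\GL_2(\mathbb Z)\backslash\GL_2(\mathbb R)$. Dividing by $2n_i$ therefore recovers Davenport's constants $\pi^2/72$ and $\pi^2/24$.

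Finally, you must specify $dg$ on the determinant $-1$ component. The coordinates $n(u)a(t)k\lambda$ with $k\in\SO_2(\mathbb R)$, $\lambda>0$ only cover $\GL_2^+(\mathbb R)$. Yet $\Stab(v_i)$ contains elements of determinant $-1$, e.g.\ $\left(\begin{smallmatrix}0&1\\1&0\end{smallmatrix}\right)$ for $xy(x-y)$ and $\operatorname{diag}(-1,1)$ for $x(x^2+y^2)$. Over $\GL_2^+(\mathbb R)$ alone the fibres therefore have size $n_i/2$, and the constant would be $n_i/(8\pi)$.
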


\begin{proof}
Under the twisted action on binary cubic forms,
\[
\Disc(g\cdot v)=(\det g)^2\Disc(v),
\]
while Euclidean measure transforms by
\[
d(g\cdot v)=|\det g|^2\,dv.
\]
Hence
\[
|\Disc(g\cdot v)|^{-1}d(g\cdot v)
=
|\Disc(v)|^{-1}dv.
\]
With the above normalization of Haar measure, the orbit map contributes the factor $1/(2\pi)$. Passing from the real orbit to $V_{\mathbb R}^{(i)}$ introduces the stabilizer multiplicity $n_i$.
\end{proof}

\begin{proposition}[Averaging interchange]\label{prop:averaging-interchange}
Fix $v_i\in V_{\mathbb R}^{(i)}$, and let $\mathcal H^{(i)}\subset\GL_2(\mathbb R)$ be a maximal measurable set such that $\mathcal H^{(i)}v_i=B_\varepsilon^{(i)}$ as an $n_i$-fold multiset. Then, for every $x\in\GL_2(\mathbb R)\cdot v_i$,
\[
\int_{h\in\mathcal H^{(i)}}
\#\{g\in\mathcal F:x=gh\cdot v_i\}\,dh
=
\int_{g\in\mathcal F}
\#\{h\in\mathcal H^{(i)}:x=gh\cdot v_i\}\,dg.
\]
\end{proposition}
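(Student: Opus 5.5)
The identity is a Fubini--Tonelli interchange combined with the right-invariance of Haar measure. I would write both sides as the integral of one nonnegative function on $\mathcal F\times\mathcal H^{(i)}$ and then apply a change of variables. Fix $x\in\GL_2(\mathbb R)\cdot v_i$ and choose $h_x\in\GL_2(\mathbb R)$ with $h_x\cdot v_i=x$. Then
\[
\{(g,h):gh\cdot v_i=x\}=\{(g,h):gh\in h_x\Stab_{\GL_2(\mathbb R)}(v_i)\}.
\]
The stabilizer has $n_i$ elements, so this set is the disjoint union of the $n_i$ graphs $h=g^{-1}h_x\sigma$, with $\sigma\in\Stab_{\GL_2(\mathbb R)}(v_i)$. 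Each counting function is therefore a finite sum of indicator functions:
\[
\#\{g\in\mathcal F:x=gh\cdot v_i\}=\sum_{\sigma}\mathbf 1_{\mathcal F}(h_x\sigma h^{-1}),
\qquad
\#\{h\in\mathcal H^{(i)}:x=gh\cdot v_i\}=\sum_{\sigma}\mathbf 1_{\mathcal H^{(i)}}(g^{-1}h_x\sigma).
\]
These are measurable because $\mathcal F$ and $\mathcal H^{(i)}$ are measurable and inversion and multiplication are continuous. They are also bounded by $n_i$, since the map $\sigma\mapsto h_x\sigma h^{-1}$ is injective.

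The left-hand side then equals
\[
\sum_\sigma\int_{\GL_2(\mathbb R)}\mathbf 1_{\mathcal H^{(i)}}(h)\,\mathbf 1_{\mathcal F}(h_x\sigma h^{-1})\,dh .
\]
For each $\sigma$, I would substitute $g=h_x\sigma h^{-1}$, which is the same as $h=g^{-1}h_x\sigma$. This map is a composition of inversion with a left translation and then a right translation. Since $\GL_2(\mathbb R)$ is reductive, and hence unimodular, Haar measure is invariant under inversion and under both translations, so $dh=dg$. The $\sigma$-term becomes
\[
\int_{\GL_2(\mathbb R)}\mathbf 1_{\mathcal F}(g)\,\mathbf 1_{\mathcal H^{(i)}}(g^{-1}h_x\sigma)\,dg.
\]
Summing over $\sigma$ gives exactly the right-hand side. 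There is no need to justify exchanging the finite sum with the integrals, because all integrands are nonnegative; the common value is allowed to be $+\infty$. In the intended application it is finite, because $\mathcal H^{(i)}$ is relatively compact: it maps onto the small ball $B_\varepsilon^{(i)}$ with finite fibers.

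The only step that needs real care is the invariance of $dg$ under $h\mapsto g^{-1}h_x\sigma$ with the specific normalization $dg=t^{-2}\,du\,d^\times t\,dk\,d^\times\lambda$. I would verify that this expression is a bi-invariant Haar measure. Left invariance follows from the standard $NAK$ computation, where $t^{-2}$ is the modular factor of the Borel subgroup. Unimodularity of $\GL_2(\mathbb R)$ then gives right invariance and inversion invariance. The phrase ``as an $n_i$-fold multiset'' in the hypothesis plays no role in the identity itself; it matters only when the result is later combined with Proposition~\ref{prop:change-variables} to express the averaged count.
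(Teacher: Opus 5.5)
Your proposal is correct and follows the paper's proof essentially step for step: both use the finite stabilizer of $v_i$ to write each count as $\sum_{\sigma}$ of indicator functions, and both then substitute $g=h_x\sigma h^{-1}$ in each term, using unimodularity of $\GL_2(\mathbb R)$. Your extra remarks on measurability and Tonelli, and on checking that $t^{-2}\,du\,d^\times t\,dk\,d^\times\lambda$ is a bi-invariant Haar measure, are harmless supplements that the paper leaves implicit.
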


\begin{proof}
Choose $\gamma_x\in\GL_2(\mathbb R)$ such that $x=\gamma_x\cdot v_i$, and let $S_i=\operatorname{Stab}_{\GL_2(\mathbb R)}(v_i)$. For fixed $h$, the equation $x=gh\cdot v_i$ is equivalent to
\[
g=\gamma_x\sigma h^{-1}
\]
for some $\sigma\in S_i$. Therefore
\[
\#\{g\in\mathcal F:x=gh\cdot v_i\}
=
\sum_{\sigma\in S_i}
\mathbf 1_{\mathcal F}(\gamma_x\sigma h^{-1}).
\]
Integrating in $h$ and making the change of variables $g=\gamma_x\sigma h^{-1}$, using the unimodularity of $\GL_2(\mathbb R)$, gives
\[
\int_{h\in\mathcal H^{(i)}}
\mathbf 1_{\mathcal F}(\gamma_x\sigma h^{-1})\,dh
=
\int_{g\in\mathcal F}
\mathbf 1_{\mathcal H^{(i)}}(g^{-1}\gamma_x\sigma)\,dg.
\]
Summing over $\sigma\in S_i$ gives the stated identity.
\end{proof}

\begin{proposition}[Averaging identity]\label{prop:averaging-identity-e1}
For $\alpha\in \{\pm1 \}$ we have
\[
\mathcal A_\alpha^{(i)}(X)
=
\frac{2\pi}{\mu_i(B_\varepsilon^{(i)})}
\int_{g\in\mathcal F}
\#\left\{
f'\in S_\alpha^{(i),\mathrm{irr}}
\cap gB_\varepsilon^{(i)}:
c_-X\leq|\Disc(f')|<c_+X
\right\}\,dg.
\]
where 
$S_\alpha^{(i)}
=
\{f\in V_{\mathbb Z}^{(i)}:a(f)=\alpha\}.$
\end{proposition}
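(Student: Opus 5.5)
The plan is to unfold the definition of $\mathcal A_\alpha^{(i)}(X)$ and then apply Proposition~\ref{prop:averaging-interchange} and Proposition~\ref{prop:change-variables}, in the same way as in Bhargava's averaging argument.

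\emph{Step 1: parametrize the ball.} Parametrize $B_\varepsilon^{(i)}$ by $\mathcal H^{(i)}$, so that $v'=h\cdot v_i$ for $h\in\mathcal H^{(i)}$. By Proposition~\ref{prop:change-variables}, applied to $\phi$ running over indicator-type test functions supported in $B_\varepsilon^{(i)}$, the measure $|\Disc(v')|^{-1}\,dv'$ on $B_\varepsilon^{(i)}$ pulls back to $2\pi\,dh$ on $\mathcal H^{(i)}$. The factor $n_i$ is absorbed because $\mathcal H^{(i)}v_i$ covers $B_\varepsilon^{(i)}$ as an $n_i$-fold multiset. This gives
\[
\mathcal A_\alpha^{(i)}(X)=\frac{2\pi}{\mu_i(B_\varepsilon^{(i)})}\int_{h\in\mathcal H^{(i)}}\#\{f'\in\mathcal F h v_i\cap S_\alpha^{(i),\mathrm{irr}}:c_-X\le|\Disc(f')|<c_+X\}\,dh .
\]

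\emph{Step 2: rewrite the count and swap.} Count $f'\in\mathcal F hv_i$ as a multiset. The count inside the integral then equals
\[
\sum_{x}\#\{g\in\mathcal F:x=gh\cdot v_i\},
\]
where $x$ runs over $S_\alpha^{(i),\mathrm{irr}}$ with $c_-X\le|\Disc(x)|<c_+X$. This sum is finite for each $h$, since $\mathcal F hv_i$ meets only finitely many integral points of bounded discriminant; for a fixed $P$ the fibres are finite. Interchange $\sum_x$ with $\int_h$ by Tonelli, since everything is nonnegative. Then apply Proposition~\ref{prop:averaging-interchange} to each $x$, so that $\int_h\#\{g\in\mathcal F:x=ghv_i\}\,dh$ becomes $\int_{g\in\mathcal F}\#\{h\in\mathcal H^{(i)}:x=ghv_i\}\,dg$.

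\emph{Step 3: reassemble.} Sum back over $x$, again by Tonelli. For fixed $g$, the quantity $\sum_x\#\{h:x=ghv_i\}$ is the number of integral $x$ in the multiset $g\mathcal H^{(i)}v_i=gB_\varepsilon^{(i)}$, counted with multiplicity $n_i$. Keeping the multiplicity conventions consistent between Steps 1 and 3, the $n_i$ cancels. This gives the stated identity with the set $S_\alpha^{(i),\mathrm{irr}}\cap gB_\varepsilon^{(i)}$.

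The main obstacle is the bookkeeping of the normalizing constants. Three factors have to be tracked: the $2\pi$ from the orbit map, the $n_i$ from the stabilizer, and the multiset conventions in $\mathcal Fv'$ and in $\mathcal H^{(i)}v_i$. I would check the constant by testing the identity on a compactly supported continuous function in place of the lattice-point count. Measurability and finiteness are routine. The conditions $a(f')=\alpha$ and irreducibility are properties of $x$ alone, so they pass through the interchange unchanged. This is the point that uses the fact that $f'$, not its reduction, is constrained in the definition of $\mathcal A_\alpha^{(i)}$.
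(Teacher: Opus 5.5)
Your route is the paper's. You unfold $\mathcal A_\alpha^{(i)}(X)$ as a sum over integral $f'$, pass from $v'\in B_\varepsilon^{(i)}$ to $h\in\mathcal H^{(i)}$ via Proposition~\ref{prop:change-variables}, apply Proposition~\ref{prop:averaging-interchange} to each $f'$, and reassemble. The paper expands the sum before changing variables rather than after, which makes no difference.

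The one thing to fix is the constant in your Step 1. Take $\mathcal H^{(i)}$ to be the $n_i$-fold preimage of $B_\varepsilon^{(i)}$ and use the form of Proposition~\ref{prop:change-variables} with the factor $n_i$, which is the form the paper uses. Then $\int_{B_\varepsilon^{(i)}}\psi(v')|\Disc(v')|^{-1}\,dv'=\frac{2\pi}{n_i}\int_{\mathcal H^{(i)}}\psi(hv_i)\,dh$. So your first display should carry $\frac{2\pi}{n_i\,\mu_i(B_\varepsilon^{(i)})}$, not $\frac{2\pi}{\mu_i(B_\varepsilon^{(i)})}$. The $n_i$ cannot both be ``absorbed'' in Step 1 and ``cancel'' in Step 3.

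As written, Step 1 has $2\pi$ and Step 3 counts $g\mathcal H^{(i)}v_i$ with multiplicity $n_i$, so you would end with $2\pi n_i/\mu_i(B_\varepsilon^{(i)})$. Once the $1/n_i$ is restored, the multiplicity $n_i$ from Step 3 cancels it exactly as in the paper, and you get the stated identity. This is a bookkeeping correction, not a change of method.
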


\begin{proof}
Expanding the counting function and interchanging the finite sum with the integral gives
\[
\mu_i(B_\varepsilon^{(i)})\mathcal A_\alpha^{(i)}(X)
=
\sum_{\substack{
f'\in S_\alpha^{(i),\mathrm{irr}}\\
c_-X\leq|\Disc(f')|<c_+X
}}
\int_{v'\in B_\varepsilon^{(i)}}
\#\{g\in\mathcal F:f'=g\cdot v'\}
|\Disc(v')|^{-1}\,dv'.
\]
Using Proposition~\ref{prop:change-variables}, the inner integral becomes
\[
\frac{2\pi}{n_i}
\int_{h\in\mathcal H^{(i)}}
\#\{g\in\mathcal F:f'=gh\cdot v_i\}\,dh.
\]
Applying Proposition~\ref{prop:averaging-interchange}, we obtain
\[
\mu_i(B_\varepsilon^{(i)})\mathcal A_\alpha^{(i)}(X)
=
\frac{2\pi}{n_i}
\int_{g\in\mathcal F}
\#\left\{
f'\in S_\alpha^{(i),\mathrm{irr}}
\cap g\mathcal H^{(i)}v_i:
c_-X\leq|\Disc(f')|<c_+X
\right\}\,dg.
\]
Since $\mathcal H^{(i)}v_i$ is an $n_i$-fold cover of $B_\varepsilon^{(i)}$, the factor $n_i$ cancels. Therefore
\[
\mathcal A_\alpha^{(i)}(X)
=
\frac{2\pi}{\mu_i(B_\varepsilon^{(i)})}
\int_{g\in\mathcal F}
\#\left\{
f'\in S_\alpha^{(i),\mathrm{irr}}
\cap gB_\varepsilon^{(i)}:
c_-X\leq|\Disc(f')|<c_+X
\right\}\,dg.
\]
\end{proof}

\begin{proposition}[Averaging lower bound]\label{prop:averaging-lower-e1}
For $\alpha\in\{\pm1\}$,
\[
\mathcal A_\alpha^{(i)}(X)\gg X^{5/6}.
\]
\end{proposition}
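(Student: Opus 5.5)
By Proposition~\ref{prop:averaging-identity-e1} it suffices to bound from below
\[
\int_{g\in\mathcal F}\#\bigl\{f'\in S_\alpha^{(i),\mathrm{irr}}\cap gB_\varepsilon^{(i)}:c_-X\leq|\Disc(f')|<c_+X\bigr\}\,dg,
\]
and I will restrict the integral to a carefully chosen subregion of $\mathcal F$ where the inner count is large and can be computed by Schmidt's theorem. Write $g=n(u)a(t)k\lambda$. Shrinking $B_\varepsilon^{(i)}$ and the range of $k$ if necessary, I may assume that every $v'\in kB_\varepsilon^{(i)}$ has leading coefficient in a fixed interval $[a_1,a_2]$ with $a_1>0$ (possible since $v_i$ can be chosen, after rotation, with nonzero leading coefficient). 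Then by the explicit formulas in the proof of Lemma~\ref{lem:iwasawa-coefficients}, the leading coefficients of forms in $gB_\varepsilon^{(i)}$ fill an interval of length $\asymp\lambda/t^3$ around $\lambda a_k/t^3$. I restrict to $\lambda$ in a dyadic range $\lambda\asymp X^{1/4}$ chosen so that the discriminant condition holds on all of $gB_\varepsilon^{(i)}$, to $k$ in a small neighborhood of a fixed $k_0$, to $u\in[-\tfrac12,\tfrac12]$, and to $t$ with $\lambda/t^3$ in a fixed window such that the value $\alpha=\pm1$ (sign arranged by choosing $k_0$ or by the symmetry $f\mapsto -f$, $P\mapsto -P$) lies well inside the $a$-projection of $gB_\varepsilon^{(i)}$. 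Concretely $t\asymp\lambda^{1/3}\asymp X^{1/12}$, and in particular $t\geq1\geq t_0$, so these $g$ genuinely lie in $\mathcal F$.

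For such $g$, the slice $\{a=\alpha\}\cap gB_\varepsilon^{(i)}$ is a three-dimensional region in the $(b,c,d)$-coordinates whose extents are $\asymp\lambda/t\asymp\lambda^{2/3}$, $\lambda t\asymp\lambda^{4/3}$ and $\lambda t^3\asymp\lambda^2$. Its volume is therefore $\gg\lambda^3t^3\asymp\lambda^4$, since the slice passes through the interior of the ball image at a bounded-below relative depth. This is a semialgebraic region, so it has bounded narrow class. I then apply Theorem~\ref{thm:schmidt} to the lattice $\mathbb Z^3$ (the affine lattice $\mathcal L_{\alpha,0}$, translated as in Lemma~\ref{lem:affine-translate}). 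All successive minima equal $1$, and every projection has volume at most the product of the two largest extents, $\ll\lambda^{10/3}$. This gives
\[
\#\{f'\in S_\alpha^{(i)}\cap gB_\varepsilon^{(i)}\}\gg\lambda^4-O(\lambda^{10/3}).
\]
Reducible forms are removed by a variant of Lemma~\ref{lem:reducible-fixed-P} for the box with $a=\alpha$ fixed, which costs $O(X^{1/2+\varepsilon})=o(\lambda^4)$.

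Finally, I integrate over the restricted region with $dg=t^{-2}\,du\,d^\times t\,dk\,d^\times\lambda$. The $u$- and $k$-integrals contribute constants and $\int d^\times\lambda\asymp1$ over the dyadic $\lambda$-range. Over the window $t\asymp\lambda^{1/3}$, the $t$-integral is $\int t^{-2}\,d^\times t\asymp\lambda^{-2/3}$. Hence
\[
\mathcal A_\alpha^{(i)}(X)\gg\lambda^4\cdot\lambda^{-2/3}=\lambda^{10/3}\asymp X^{5/6}.
\]

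The main obstacle is the second step. I must justify uniformly in $g$ that the slice $a=\alpha$ cuts $gB_\varepsilon^{(i)}$ in a set of volume $\gg\lambda^3t^3$ rather than grazing its boundary. I would handle this by choosing the $t$-window so that $\alpha$ sits in the middle half of the $a$-projection, and then using that the image of a ball under the linear map $\eta$ is an ellipsoid, whose central slices have comparable volume.
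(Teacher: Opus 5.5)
Your argument is correct and gives the same bound, but you restrict the averaging integral to a different piece of $\mathcal F$ than the paper does.

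\textbf{Comparison with the paper.} The paper keeps the whole range $t_*\le t\le c\lambda^{1/3}$ and uses the slice count $\asymp\lambda^3t^3$ for every such $t$. It then integrates against $t^{-3}\lambda^{-1}\,dt\,d\lambda$. The integrand $\lambda^3t^3\cdot t^{-3}$ is flat in $t$, and the total is $\asymp\lambda^{10/3}$. For the slice $a=\alpha$ to stay central all the way down to $t=t_*$, $K_0$ must implicitly lie near a rotation for which $k\cdot v_i$ has vanishing leading coefficient. At $t=t_*$ the $a$-range of $gB_\varepsilon^{(i)}$ has length $\asymp\lambda$, which is what forces this; the paper does not spell it out.

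You instead take $k$ near a point where $a_k$ is bounded away from $0$, and keep only the window $t\asymp\lambda^{1/3}$. There the slice count is $\asymp\lambda^4$ and the $t$-measure is $\asymp\lambda^{-2/3}$. Because the paper's integrand is flat in $dt$, this top window already carries a fixed proportion of the paper's integral, so you lose only a constant. In exchange, you gain three things:
\begin{enumerate}
\item your non-degeneracy condition ($\alpha$ in the middle half of the $a$-projection of an ellipsoid) is explicit and uniform;
\item only one scale is involved;
\item it makes visible that the whole $X^{5/6}$ comes from $t\asymp X^{1/12}$.
\end{enumerate}

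\textbf{Two technical corrections.}

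First, Theorem~\ref{thm:schmidt} as stated bounds the error by $R^j/(\lambda_1\cdots\lambda_j)$, with $R$ the radius of a ball containing the region. Applied directly to $\mathbb Z^3$ with $R\asymp\lambda t^3\asymp\lambda^2$, its $j=2$ term is $\asymp\lambda^4$, the same size as your main term. So that step does not follow from the cited theorem. Your projection bound $O(\lambda^{10/3})$ is correct, but it is Davenport's lemma. Alternatively, rescale by $\eta_t$ first, as the paper does. Then $R\asymp\lambda$ and $\eta_t\Lambda_{\alpha,0}$ has successive minima $t^{-3},t^{-1},t$, giving error $\lambda^2t^4\asymp\lambda^{10/3}$.

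Second, the bound $O(X^{1/2+\varepsilon})$ for reducible forms does not follow from the argument of Lemma~\ref{lem:reducible-fixed-P} when $\beta=0$. In that case the relation $r\alpha+s\beta=\pm1$ no longer determines $s$. The divisor argument then only gives $\ll(\lambda/t)(\lambda t^3)X^{\varepsilon}\asymp\lambda^{8/3}X^{\varepsilon}$ per $g$ in your window. This is still $o(\lambda^4)$, so your conclusion is unaffected.
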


\begin{proof}
By Proposition~\ref{prop:averaging-identity-e1},
\[
\mathcal A_\alpha^{(i)}(X)
=
\frac{2\pi}{\mu_i(B_\varepsilon^{(i)})}
\int_{g\in\mathcal F}
\#\left\{
f'\in S_\alpha^{(i),\mathrm{irr}}\cap gB_\varepsilon^{(i)}:
c_-X\leq|\Disc(f')|<c_+X
\right\}\,dg.
\]
By Lemma~\ref{lem:reducible-fixed-P}, it is enough to count \textit{all} integral forms with $a=\alpha$, at the cost of $O(X^{1/2+\varepsilon})$. Recall that $\lambda\asymp X^{1/4}$. Since $a=\alpha\neq0$, Lemma~\ref{lem:iwasawa-coefficients} gives
\[
1\ll \frac{\lambda}{t^3},
\qquad\text{and hence}\qquad
t\ll\lambda^{1/3}.
\]

Write $g=n(u)a(t)k\lambda$. We can restrict the integral to a positive-measure subregion. Choose fixed positive-measure subsets $N_0\subset N$ and $K_0\subset K$, constants $0<c_1<c_2$, and a sufficiently small $c>0$ such that, whenever
\[
u\in N_0,\qquad k\in K_0,\qquad
c_1X^{1/4}\leq\lambda\leq c_2X^{1/4},
\qquad
t_*\leq t\leq c\lambda^{1/3},
\]
the affine slice $a=\alpha$ meets $n(u)a(t)k\lambda B_\varepsilon^{(i)}$ away from the boundary. Here
\[
t_*=\frac{\sqrt[4]{3}}{\sqrt2}.
\]

Inside $n(u)a(t)k\lambda B_\varepsilon^{(i)}$, we use continuity of the discriminant and we can choose a compact subregion
\[
\mathcal C_{u,t,\lambda,k}
\subset
n(u)a(t)k\lambda B_\varepsilon^{(i)}
\]
whose side lengths are fixed positive multiples of
\[
\frac{\lambda}{t^3},
\qquad
\frac{\lambda}{t},
\qquad
\lambda t,
\qquad
\lambda t^3.
\]
After translating $\mathcal L_{\alpha,0}$ to $\Lambda_{\alpha,0}$ due to Remark~\ref{constant affects} and applying $\eta_{t}$ to both $\mathcal C_{u,t,\lambda,k}$ and $\Lambda_{\alpha,0}$ we get that  the coefficient coordinates of forms in $\eta_t \mathcal C_{u,t,\lambda,k}$ have side lengths comparable to
\[
\lambda, \qquad \lambda,\qquad\lambda ,\qquad\lambda,
\]
uniformly for $u\in N_0$ and $k\in K_0$.
Thus, the region $\eta_t \mathcal C_{u,t,\lambda,k}$ contains a ball of radius $O(\lambda)$. Additionally, by Lemma~\ref{lem:transformed-lattice-fixed-P}
\[
d(\eta_t\Lambda_{\alpha,0})=t^{-3},
\]
After computing all the orthogonal projections  using Lemma~\ref{lem:lattice-fixed-P} for the lattice  $\eta_t\Lambda_{\alpha,0}$ and applying Schmidt's theorem~\ref{thm:schmidt}, we get\footnote{We use three-dimensional Hausdorff measure to measure
volume in the three-dimensional affine subspace of $\mathbb R^4$;
see \cite{widmer2012lipschitz} for more details. Similarly, the volumes of
the projections appearing in the error term are measured using
Hausdorff measure of the corresponding dimension. For background
on Hausdorff measure, see \cite{evans2025measure}.}
\[
\#(\mathcal C_{u,t,\lambda,k}\cap\mathcal L_{\alpha,0})
\geq
c_0\lambda^3t^3
-
M(\lambda^2t^4+1),
\]
with constants $c_0,M>0$ independent of $u,t,\lambda,k$ in the chosen range.
Restricting the averaging integral to this range gives
\begin{align*}
\mathcal A_\alpha^{(i)}(X)
&\gg
\int_{c_1X^{1/4}}^{c_2X^{1/4}}
\int_{t_*}^{c\lambda^{1/3}}
\left(c_0\lambda^3t^3-M(\lambda^2t^4+1)\right)
t^{-3}\lambda^{-1}\,dt\,d\lambda\\
&\qquad
-
O(X^{1/2+\varepsilon}),
\end{align*}
where we used $t^{-2}d^\times t\,d^\times\lambda=t^{-3}\lambda^{-1}\,dt\,d\lambda$. The main term is
\[
\int_{c_1X^{1/4}}^{c_2X^{1/4}}
\int_{t_*}^{c\lambda^{1/3}}
\lambda^3t^3t^{-3}\lambda^{-1}\,dt\,d\lambda
\asymp X^{5/6}.
\]
The two error integrals are $O(X^{2/3})$ and $O(1)$, respectively. The reducible contribution is $O(X^{1/2+\varepsilon})$, which is $o(X^{5/6})$ after choosing $\varepsilon<1/3$. Hence
\[
\mathcal A_\alpha^{(i)}(X)\gg X^{5/6}.
\]
\end{proof}

\begin{proposition}[Denominator lower bound]\label{prop:denominator-e1}
For $\alpha\in\{\pm1\}$,
\[
N_{(\alpha,0)}^{(i)}(X)\gg X^{5/6}.
\]
\end{proposition}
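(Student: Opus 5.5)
The bound follows by combining Proposition~\ref{prop:denominator-reduction-e1} with Proposition~\ref{prop:averaging-lower-e1}, after choosing the auxiliary exponent $\theta$ small enough that the error terms are dominated by $X^{5/6}$.

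First fix $\theta$ with $0<\theta<1/36$. Proposition~\ref{prop:denominator-reduction-e1} then gives
\[
N_{(\alpha,0)}^{(i)}(X)\gg \mathcal A_\alpha^{(i)}(X)-O\bigl(X^{3/4+3\theta}+X^{1/2+4\theta}+X^{1/2+\varepsilon}\bigr).
\]
Proposition~\ref{prop:averaging-lower-e1} supplies $\mathcal A_\alpha^{(i)}(X)\geq c_3X^{5/6}$ for some $c_3>0$ and all large $X$. It remains to check that each error exponent is strictly less than $5/6$:
\begin{itemize}
\item $3/4+3\theta<5/6$ is equivalent to $\theta<1/36$, which holds by the choice of $\theta$. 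This is exactly why the hypothesis $\theta<1/36$ appears.
\item $1/2+4\theta<1/2+1/9<5/6$.
\item $1/2+\varepsilon<5/6$ once we take $\varepsilon<1/3$.
\end{itemize}
So the error terms are $o(X^{5/6})$, and for $X$ sufficiently large they are at most $\tfrac12 c_3X^{5/6}$. This gives $N_{(\alpha,0)}^{(i)}(X)\gg X^{5/6}$.

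The one point that needs care is the implied constant in the first step. Proposition~\ref{prop:denominator-reduction-e1} is stated with $\gg$, which comes from the multiplicity comparison in Proposition~\ref{prop:fundamental-multiplicity}. I would read it as $C\,N_{(\alpha,0)}^{(i)}(X)\geq \mathcal A_\alpha^{(i)}(X)-O(\cdots)$ for a fixed constant $C$ depending only on $n_i$ and the ball $B_\varepsilon^{(i)}$. With that reading, the subtraction in the argument above is legitimate and independent of $X$.

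Lemma~\ref{lem:leading-coefficient-stability} only holds for $X$ large in terms of $\theta$. This does not cause a problem, because $\theta$ is fixed once and for all before letting $X\to\infty$.
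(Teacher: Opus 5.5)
Your proposal is correct and is essentially the paper's own proof. You combine Proposition~\ref{prop:denominator-reduction-e1} with the bound $\mathcal A_\alpha^{(i)}(X)\gg X^{5/6}$ from Proposition~\ref{prop:averaging-lower-e1}, and check that $3/4+3\theta$, $1/2+4\theta$ and $1/2+\varepsilon$ are all below $5/6$ when $0<\theta<1/36$ and $\varepsilon<1/3$; your remarks on reading $\gg$ with a fixed constant and on fixing $\theta$ first just make explicit what the paper leaves implicit.
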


\begin{proof}
By Proposition~\ref{prop:denominator-reduction-e1},
\[
N_{(\alpha,0)}^{(i)}(X)
\gg
\mathcal A_\alpha^{(i)}(X)
-
O\left(
X^{3/4+3\theta}
+
X^{1/2+4\theta}
+
X^{1/2+\varepsilon}
\right).
\]
By Proposition~\ref{prop:averaging-lower-e1},
\[
\mathcal A_\alpha^{(i)}(X)\gg X^{5/6}.
\]
Since $0<\theta<1/36$, we have
\[
\frac34+3\theta<\frac56,
\qquad
\frac12+4\theta<\frac56.
\]
Choosing $\varepsilon<1/3$, all error terms are $o(X^{5/6})$. Therefore
\[
N_{(\alpha,0)}^{(i)}(X)\gg X^{5/6}.
\]
\end{proof}

We can now prove Theorem~\ref{thm:escape-e1}.

\begin{proof}
Let $P=(\alpha,0)$. Since
\[
N_P^{(i)}(X)
=
N_{P,T}^{(i)}(X)
+
M_{P,T}^{(i)}(X),
\]
we have
\[
\left|
\frac{N_{P,T}^{(i)}(X)}{N_P^{(i)}(X)}
-1
\right|
=
\frac{M_{P,T}^{(i)}(X)}{N_P^{(i)}(X)}.
\]
By Propositions~\ref{prop:compact-upper-e1} and~\ref{prop:denominator-e1}, for every fixed $T$,
\[
\frac{M_{P,T}^{(i)}(X)}{N_P^{(i)}(X)}
\ll
T^3X^{-1/12}
+
T^4X^{-1/3}
+
X^{-1/3+\varepsilon}.
\]
The right-hand side tends to $0$ as $X\to\infty$ for every fixed $T$. Hence
\[
\limsup_{X\to\infty}
\left|
\frac{N_{P,T}^{(i)}(X)}{N_P^{(i)}(X)}
-1
\right|
=0.
\]
Taking $T\to\infty$ gives
\[
\lim_{T\to\infty}
\limsup_{X\to\infty}
\left|
\frac{N_{P,T}^{(i)}(X)}{N_P^{(i)}(X)}
-1
\right|
=0.
\]
\end{proof}

\subsection{Averaging over growing solutions}

The estimates used for Theorem~\ref{thm:non-escape-fixed-P} are sufficient to average over a growing family of primitive solutions.

\begin{theorem}\label{thm:non-escape-averaged}
Fix $\delta>0$, and let $Y=Y(X)$ satisfy
\[
2\leq Y\leq X^{1/12-\delta}.
\]
Let
\[
\mathcal P(Y)
=
\{(\alpha,\beta)\in\mathbb Z_{\mathrm{prim}}^2:
Y\leq|\alpha|,|\beta|<2Y\}.
\]
Define
\[
J_T^{(i)}(X;Y)
=
\sum_{P\in\mathcal P(Y)}N_{P,T}^{(i)}(X),
\qquad
J^{(i)}(X;Y)
=
\sum_{P\in\mathcal P(Y)}N_P^{(i)}(X).
\]
Then
\[
\lim_{T\to\infty}\limsup_{X\to\infty}
\frac{J_T^{(i)}(X;Y)}{J^{(i)}(X;Y)}=0.
\]
\end{theorem}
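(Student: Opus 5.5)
The plan is to sum the uniform estimates of Propositions~\ref{prop:cusp-upper-fixed-P} and~\ref{prop:compact-lower-fixed-P} over $P\in\mathcal P(Y)$. Both propositions are stated uniformly in primitive $P=(\alpha,\beta)$ with $\beta\neq0$, and every $P\in\mathcal P(Y)$ has $|\beta|\geq Y\geq2$. Also $\#\mathcal P(Y)\asymp Y^2$: the box $Y\le|\alpha|,|\beta|<2Y$ has $\asymp Y^2$ integer points, and a positive proportion of them are coprime.

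\textbf{Upper bound for the cusp.} For each $P\in\mathcal P(Y)$, Proposition~\ref{prop:cusp-upper-fixed-P} gives
\[
N_{P,T}^{(i)}(X)\ll \frac{X^{3/4}}{T^3Y^3}+\frac{X^{1/2}}{Y^2}+X^{1/2+\varepsilon}.
\]
Summing over the $\ll Y^2$ points yields
\[
J_T^{(i)}(X;Y)\ll \frac{X^{3/4}}{T^3Y}+X^{1/2}+Y^2X^{1/2+\varepsilon}.
\]

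\textbf{Lower bound for the total.} Proposition~\ref{prop:compact-lower-fixed-P} with $|\alpha|+|\beta|<4Y$ gives
\[
N_P^{(i)}(X)\geq c\frac{X^{3/4}}{64Y^3}-C\frac{X^{1/2}}{Y^2}-C_\varepsilon X^{1/2+\varepsilon}.
\]
Summing over $\gg Y^2$ points gives
\[
J^{(i)}(X;Y)\geq c'\frac{X^{3/4}}{Y}-C''X^{1/2}-C'''Y^2X^{1/2+\varepsilon}.
\]
Since $Y\le X^{1/12-\delta}$, we have $Y^3X^{1/2+\varepsilon}/X^{3/4}\le X^{-1/4+1/4-3\delta+\varepsilon}=X^{-3\delta+\varepsilon}$. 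Choosing $\varepsilon<3\delta$ makes both error terms $o(X^{3/4}/Y)$; the term $X^{1/2}$ is handled the same way, using $Y\le X^{1/12}$. Hence $J^{(i)}(X;Y)\gg X^{3/4}/Y$ for large $X$, which in particular is positive, so the averaged ratio is well defined.

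\textbf{Conclusion.} Dividing gives
\[
\frac{J_T^{(i)}(X;Y)}{J^{(i)}(X;Y)}\ll T^{-3}+X^{-3\delta+\varepsilon}+\frac{Y}{X^{1/4}},
\]
so the $\limsup$ in $X$ is $\ll T^{-3}$, and letting $T\to\infty$ finishes the proof.

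\textbf{Main obstacle.} The step I expect to need care is the genuine uniformity in $P$ of the two propositions. The translation in Remark~\ref{constant affects} moves the region by a B\'ezout vector $q_0$ of size up to $\|P\|^3\approx Y^3$, and this affects the radius $R$ in Theorem~\ref{thm:schmidt}. I would handle this by choosing $q_0$ reduced modulo $\Lambda_{\alpha,\beta}$. The translated region then still lies in a ball of radius $O(\lambda)$ after applying $\eta_S$, up to a shift of size bounded by the transformed basis lengths, which are $\ll Y S^3$. Since $YS^3\ll Y\lambda\ll\lambda^{1+1/3}$ could exceed $\lambda$, the cleaner route is to observe that Schmidt's error term depends only on the successive minima and the narrow class, because the count is translation-invariant after intersecting with the affine hyperplane. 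I would therefore apply Theorem~\ref{thm:schmidt} inside the three-dimensional affine space, where the relevant radius is that of the region itself, $O(\lambda)$. The minima of $\eta_S\Lambda_{\alpha,\beta}$ are $\gg|\beta|$-scaled by Lemma~\ref{lem:transformed-lattice-fixed-P}, which justifies the error terms $\lambda^2/|\beta|^2+1$ uniformly in $P$.
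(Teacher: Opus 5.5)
Your proposal is correct and is essentially the paper's own proof. You sum the bounds of Propositions~\ref{prop:cusp-upper-fixed-P} and~\ref{prop:compact-lower-fixed-P} over $\mathcal P(Y)$, bounding $|\beta|^{-3}$ and $(|\alpha|+|\beta|)^{-3}$ pointwise by $Y^{-3}$ and $(4Y)^{-3}$ rather than summing them (this changes nothing), then take $\varepsilon<3\delta$ so that $Y^2X^{1/2+\varepsilon}$ and $X^{1/2}$ are $o(X^{3/4}/Y)$, and conclude $\limsup_{X\to\infty}J_T^{(i)}(X;Y)/J^{(i)}(X;Y)\ll T^{-3}$.

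Your closing discussion of uniformity in $P$ goes beyond the paper, which takes the uniformity asserted in those propositions for granted. Even the crude translation you worried about is harmless: for $P\in\mathcal P(Y)$ the successive minima of $\eta_S\Lambda_{\alpha,\beta}$ are $\asymp|\beta|(S^{-1},S,S^{3})$, so a shift of size $\ll YS^3\ll Y\lambda$ only inflates Schmidt's error to $O(\lambda^2)=O(X^{1/2})$ per dyadic range, which the $X^{1/2+\varepsilon}$ term already absorbs.
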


\begin{proof}
Every $P=(\alpha,\beta)\in\mathcal P(Y)$ has $\beta\neq0$, so we can apply Propositions~\ref{prop:cusp-upper-fixed-P} and~\ref{prop:compact-lower-fixed-P}. For every fixed $\varepsilon>0$,
\[
N_{P,T}^{(i)}(X)
\ll
\frac{X^{3/4}}{T^3|\beta|^3}
+
\frac{X^{1/2}}{|\beta|^2}
+
X^{1/2+\varepsilon}.
\]
Since there are $O(Y)$ choices for $\alpha$,
\[
\sum_{P\in\mathcal P(Y)}\frac{1}{|\beta|^3}
\ll\frac1Y,
\qquad
\sum_{P\in\mathcal P(Y)}\frac{1}{|\beta|^2}
\ll1,
\qquad
\#\mathcal P(Y)\ll Y^2.
\]
Hence
\[
J_T^{(i)}(X;Y)
\ll
\frac{X^{3/4}}{T^3Y}
+
X^{1/2}
+
X^{1/2+\varepsilon}Y^2.
\]

For the denominator, Proposition~\ref{prop:compact-lower-fixed-P} gives
\[
N_P^{(i)}(X)
\geq
c\frac{X^{3/4}}{(|\alpha|+|\beta|)^3}
-
C\frac{X^{1/2}}{|\beta|^2}
-
C_\varepsilon X^{1/2+\varepsilon}.
\]
Since $2Y\leq|\alpha|+|\beta|<4Y$ and $\#\mathcal P(Y)\asymp Y^2$,
\[
\sum_{P\in\mathcal P(Y)}
\frac{1}{(|\alpha|+|\beta|)^3}
\asymp\frac1Y.
\]
Therefore
\[
J^{(i)}(X;Y)
\geq
c_1\frac{X^{3/4}}{Y}
-
C_1X^{1/2}
-
C_{1,\varepsilon}X^{1/2+\varepsilon}Y^2,
\]
where all the above constants are positive.
Choose $0<\varepsilon<3\delta$. Since $Y\leq X^{1/12-\delta}$,
\[
\frac{X^{1/2+\varepsilon}Y^2}{X^{3/4}/Y}
=
X^{-1/4+\varepsilon}Y^3
\leq
X^{\varepsilon-3\delta}
\longrightarrow0,
\]
and
\[
\frac{X^{1/2}}{X^{3/4}/Y}
=YX^{-1/4}
\longrightarrow0.
\]
Thus
\[
J^{(i)}(X;Y)
\gg
\frac{X^{3/4}}{Y},
\]
while
\[
J_T^{(i)}(X;Y)
\ll
\frac{X^{3/4}}{T^3Y}
+
o\left(\frac{X^{3/4}}{Y}\right).
\]
Consequently,
\[
\limsup_{X\to\infty}
\frac{J_T^{(i)}(X;Y)}{J^{(i)}(X;Y)}
\ll T^{-3}.
\]
Letting $T\to\infty$ proves the theorem.
\end{proof}

The exponent $1/12$ in the preceding theorem arises from
the range in which the counting error is smaller than
the main contribution. Taking $T=t_0$ in the dyadic
counting argument gives two-sided bounds for the averaged
count $J^{(i)}(X;Y)$, uniformly for
$2\leq Y\leq X^{1/12-\delta}$. This count enumerates pairs
$(f,P)$, where $f$ is a reduced irreducible integral binary
cubic form and $P=(\alpha,\beta)$ satisfies $f(P)=1$ and
$Y\leq|\alpha|,|\beta|<2Y$. We record this consequence
in the following corollary.

\begin{corollary}[Asymptotic bounds for the averaged count]
\label{cor:averaged-count-order}
Fix $0<\delta<1/12$ and $i\in\{0,1\}$. Uniformly for
$2\leq Y\leq X^{1/12-\delta}$, we have
\[
J^{(i)}(X;Y)\asymp\frac{X^{3/4}}{Y},
\]
where the implied constants may depend on $\delta$ and
the fixed counting parameters, but not on $X$ or $Y$.
\end{corollary}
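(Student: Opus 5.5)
The lower bound $J^{(i)}(X;Y)\gg X^{3/4}/Y$ is already established in the proof of Theorem~\ref{thm:non-escape-averaged}. It follows by summing Proposition~\ref{prop:compact-lower-fixed-P} over $\mathcal P(Y)$, using $\sum_{P\in\mathcal P(Y)}(|\alpha|+|\beta|)^{-3}\asymp Y^{-1}$. The error terms $C_1X^{1/2}$ and $C_{1,\varepsilon}X^{1/2+\varepsilon}Y^2$ are absorbed by fixing $0<\varepsilon<3\delta$. The only care needed is uniformity: $\delta$ is fixed, so the absorption threshold $X^{\varepsilon-3\delta}\le c_1/(2C_{1,\varepsilon})$ holds for all $X\ge X_0(\delta)$, independently of $Y$ in the admissible range. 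For the finitely many smaller $X$ the statement is vacuous or can be absorbed into constants.

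For the upper bound, I would rerun the dyadic argument of Proposition~\ref{prop:cusp-upper-fixed-P} with $T=t_0$, so that $N_{P,t_0}^{(i)}(X)$ is essentially $N_P^{(i)}(X)$. The region $t_0\le t<1$, where $\nu(t)$ is a union of at most two intervals, is a compact piece. On it Lemma~\ref{lem:iwasawa-coefficients} still gives coefficient bounds of the same shape, and the Schmidt estimate yields a contribution $\ll X^{3/4}|\beta|^{-3}+X^{1/2}|\beta|^{-2}+1$. Together with the reducible and $a_f=0$ contributions, this gives, uniformly in $P$ with $\beta\ne0$,
\[
N_P^{(i)}(X)\ll \frac{X^{3/4}}{|\beta|^3}+\frac{X^{1/2}}{|\beta|^2}+X^{1/2+\varepsilon}.
\]
Forms with $a_f=0$ are reducible (they vanish at $(1,0)$), so they are excluded automatically. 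Summing over $\mathcal P(Y)$ exactly as in the proof of Theorem~\ref{thm:non-escape-averaged} gives
\[
J^{(i)}(X;Y)\ll \frac{X^{3/4}}{Y}+X^{1/2}+X^{1/2+\varepsilon}Y^2,
\]
and the last two terms are $O(X^{3/4}/Y)$ by the same comparison $X^{-1/4+\varepsilon}Y^3\le X^{\varepsilon-3\delta}$ and $YX^{-1/4}\le X^{-1/6}$.

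The main point to verify is that all implied constants in Propositions~\ref{prop:cusp-upper-fixed-P} and~\ref{prop:compact-lower-fixed-P} are uniform in $P$, rather than depending on $P$ as in the fixed-$P$ theorem. I would check this via Remark~\ref{constant affects}. The translate $q_0$ from Lemma~\ref{lem:affine-translate} can be taken with $\|q_0\|\ll 1$, since one can choose a Bézout solution with $|a|,|b|$-type coordinates bounded by the size of the normal vector divided by its largest entry. Hence the translated region still lies in a ball of radius $O(\lambda)$ for $Y\le X^{1/12}$. In addition, the successive minima of $\eta_S\Lambda_{\alpha,\beta}$ satisfy $\lambda_1\gg |\beta| S^{-1}$-type lower bounds, controlled by the explicit basis of Lemma~\ref{lem:transformed-lattice-fixed-P} uniformly in $(\alpha,\beta)$ with $|\alpha|\asymp|\beta|\asymp Y$. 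These are exactly the uniform statements already claimed in those propositions, so the corollary follows once they are accepted.
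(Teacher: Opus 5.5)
This is essentially the paper's proof. The lower bound is taken from the proof of Theorem~\ref{thm:non-escape-averaged}, and the upper bound comes from running the dyadic Schmidt count from $t_0$, summing over $\mathcal P(Y)$, and absorbing $X^{1/2}+Y^2X^{1/2+\varepsilon}$ into $X^{3/4}/Y$ by choosing $\varepsilon<3\delta$.

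One correction concerns your uniformity aside. The translate $q_0$ cannot in general be taken with $\|q_0\|\ll 1$. A form with coefficients bounded by $K$ takes the value $1$ at only $O_K(Y)$ of the $\asymp Y^2$ points of $\mathcal P(Y)$, and in general one only has $\|q_0\|\ll Y$. This does not damage the argument, because the error term in Schmidt's theorem is controlled by volumes of projections of the region, and these are translation-invariant. So only the $O(\lambda)$ diameter of the rescaled region and the successive minima of $\eta_S\Lambda_{\alpha,\beta}$ enter, and you bound the latter correctly.
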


\begin{proof}
The lower bound follows from the proof of
Theorem~\ref{thm:non-escape-averaged}. For the upper bound,
we sum over the dyadic intervals starting at
$t_0=\sqrt[4]{3}/\sqrt2$, as in the preceding counting
arguments, to obtain
\[
J^{(i)}(X;Y)
\ll \frac{X^{3/4}}{Y}+Y^2X^{1/2+\varepsilon}.
\]
Choose $0<\varepsilon<3\delta$. Since
$Y\leq X^{1/12-\delta}$,
\[
Y^2X^{1/2+\varepsilon}
\leq \frac{X^{3/4}}{Y}\,X^{-3\delta+\varepsilon}
=o\left(\frac{X^{3/4}}{Y}\right)
\]
uniformly in the stated range. This proves the upper bound.
\end{proof}

The averaging in Theorem~\ref{thm:non-escape-averaged} is restricted to solutions
$P\in\mathcal P(Y)$ with $Y\le X^{1/12-\delta}$. It is natural to ask whether the
non-escape phenomenon persists with no restriction on the size of $P$ at all.
Before stating this as a conjecture, we record that the relevant counting
function is well defined, so that the
statement is not vacuous.

For $i\in\{0,1\}$, define
\[
J^{(i)}(X)
=
\sum_{P\in\mathcal P}N_P^{(i)}(X),
\qquad
J_T^{(i)}(X)
=
\sum_{P\in\mathcal P}N_{P,T}^{(i)}(X).
\]

\begin{proposition}[Finiteness and order of $J^{(i)}(X)$]
\label{prop:J-order}
For every $i\in\{0,1\}$,
\[
X^{3/4}
\ll_i
J^{(i)}(X)
\ll_i
X.
\]
In particular, $J^{(i)}(X)$ is finite for every $X$, and the ratio
$J_T^{(i)}(X)/J^{(i)}(X)$ is well defined for all sufficiently large $X$.
\end{proposition}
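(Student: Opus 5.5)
The lower bound is immediate from what has already been proved. All terms are nonnegative, so $J^{(i)}(X)\geq N_P^{(i)}(X)$ for any single $P\in\mathcal P$. Take $P=(0,1)$: it is primitive and has $\beta=1\neq0$, so Proposition~\ref{prop:compact-lower-fixed-P} gives $N_{(0,1)}^{(i)}(X)\gg X^{3/4}$. In particular $J^{(i)}(X)>0$ for large $X$, which makes the ratio $J_T^{(i)}(X)/J^{(i)}(X)$ well defined once the numerator and denominator are finite.

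For the upper bound I will not sum the per-$P$ estimates, since their error terms $X^{1/2+\varepsilon}$ do not sum over infinitely many $P$. Instead I will swap the order of counting. By definition,
\[
J^{(i)}(X)=\sum_{f}\#\{P\in\mathcal P: f(P)=1\},
\]
where $f$ runs over forms in $\mathcal Fv^{(i)}\cap V_{\mathbb Z}^{(i),\mathrm{irr}}$ with $c_-X\leq|\Disc(f)|<c_+X$, counted with the multiplicity of the fundamental multiset.

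For each irreducible integral cubic $f$, the Thue equation $f(x,y)=1$ has at most $12$ solutions by the Delone--Nagell--Evertse bound cited in the introduction, so the inner count is at most $12$. The multiplicities from $\mathcal Fv^{(i)}$ are bounded by $n_i$ via Proposition~\ref{prop:fundamental-multiplicity}. Hence
\[
J^{(i)}(X)\leq 12\,n_i\,\#\{\GL_2(\mathbb Z)\text{-orbits of irreducible } f\in V_{\mathbb Z}^{(i)} : |\Disc(f)|<c_+X\}.
\]
Theorem~\ref{thm:bst-davenport} bounds the right-hand side by $N(V_{\mathbb Z}^{(i)};c_+X)\ll X$. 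This gives both finiteness and $J^{(i)}(X)\ll_i X$. The same argument bounds $J_T^{(i)}(X)\leq J^{(i)}(X)$, so the numerator is finite as well.

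The main point needing care is the passage from points of the multiset $\mathcal Fv^{(i)}$ to $\GL_2(\mathbb Z)$-orbits. Each integral form appears in $\mathcal Fv^{(i)}$ with multiplicity at most $n_i$, and each orbit contributes at most $n_i/\#\Stab_{\GL_2(\mathbb Z)}(f)\leq n_i$ representatives. This follows from Proposition~\ref{prop:fundamental-multiplicity}, since the right-hand side there has cardinality at most $\#\Stab_{\GL_2(\mathbb R)}=n_i$.

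The other point to check is that the Thue bound applies to every irreducible cubic. The bound of $12$ covers all $f$ with $|\Disc(f)|$ large enough. For the finitely many small discriminants one can use Evertse's uniform bound, or simply note that $|\Disc(f)|\geq c_-X$ is large. Neither step involves any genuine estimation beyond the cited results.
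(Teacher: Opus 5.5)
Your proposal is correct and follows the paper's proof essentially verbatim. The lower bound comes from a single fixed $P\in\mathcal P$ via Proposition~\ref{prop:compact-lower-fixed-P}. The upper bound comes from swapping the order of summation, the uniform Thue bound (the paper uses $12$ for $\Disc>0$ and $5$ for $\Disc<0$), the multiplicity bound $n_i$ for $\mathcal Fv^{(i)}$, and Theorem~\ref{thm:bst-davenport}. Your hedge that the Thue bound might only cover large discriminants is unnecessary, since the cited bounds hold uniformly for all irreducible cubic forms.
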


\begin{proof}
\emph{Lower bound.} Fix any single $P_0\in\mathcal P$. Then
$J^{(i)}(X)\ge N_{P_0}^{(i)}(X)$, and Proposition~\ref{prop:compact-lower-fixed-P}
gives $N_{P_0}^{(i)}(X)\gg_{P_0} X^{3/4}$.

\emph{Upper bound.} Since every term in the sum defining $J^{(i)}(X)$ is a
nonnegative integer, we may exchange the order of summation and write
\[
J^{(i)}(X)
=
\sum_{\substack{f\in\mathcal Fv^{(i)}\cap V_Z^{(i),\mathrm{irr}}\\
c_-X\le|\mathrm{Disc}(f)|<c_+X}}
\#\{P\in\mathcal P: f(P)=1\}.
\]

Using the bounds of Delone, Evertse and Nagell \cite{Evertse1983,delaunay1930darstellung,nagell1928darstellung}, $\#\{P\in\mathbb Z^2:f(P)=1\}\le B_i$ for
every irreducible $f$ of nonzero discriminant, where $B_0=12$ and $B_1=5$.
Hence
\[
J^{(i)}(X)
\le
B_i\cdot
\#\{f\in\mathcal Fv^{(i)}\cap V_Z^{(i),\mathrm{irr}}:|\mathrm{Disc}(f)|<c_+X\}.
\]
The fundamental multiset
$\mathcal Fv^{(i)}$ represents every $\GL_2(\mathbb Z)$-orbit with multiplicity
at most $n_i$, so
\[
J^{(i)}(X)
\le
B_i\,n_i\,N(V_Z^{(i)};c_+X)
=
O_i(X)
\]
by Theorem~\ref{thm:bst-davenport}.
\end{proof}

Proposition~\ref{prop:J-order} gives
$X^{3/4}\ll J^{(i)}(X)\ll X$. However,
Theorem~\ref{thm:non-escape-averaged} and
Corollary~\ref{cor:averaged-count-order} only treat
$2\leq Y\leq X^{1/12-\delta}$. We conjecture that non-escape
continues to hold in this unrestricted setting. The numerical experiments in Appendix~\ref{app:thin-families}
provide further motivation for this conjecture.

\begin{conjecture}[Non-escape without a height restriction]
\label{conj:non-escape-unrestricted}
Let $i\in\{0,1\}$ and put
\[
\mathcal P
=\mathbb Z_{\mathrm{prim}}^2\setminus\{(1,0),(-1,0)\}.
\]
Define
\[
J^{(i)}(X)=\sum_{P\in\mathcal P}N_P^{(i)}(X),
\qquad
J_T^{(i)}(X)=\sum_{P\in\mathcal P}N_{P,T}^{(i)}(X).
\]
Then
\[
\lim_{T\to\infty}\limsup_{X\to\infty}
\frac{J_T^{(i)}(X)}{J^{(i)}(X)}=0.
\]
\end{conjecture}

\section{Normalized Weierstrass models and marked reduced binary cubic forms}
\label{Correspondence}

In this section, we relate normalized integral Weierstrass models to reduced binary cubic forms equipped with a distinguished integral solution to the Thue equation $F(X,Y)=1$. Starting from a normalized model, a choice of reduction matrix produces a reduced binary cubic form together with a primitive vector $P$ satisfying $f(P)=1$. Conversely, a reduced binary cubic form together with such a vector determines a unique normalized integral Weierstrass model and a reduction matrix. We show that these constructions are inverse to one another.

After fixing the marked solution $P$, this correspondence gives
a bijection between the chosen reduced representatives satisfying
$f(P)=1$ and normalized integral Weierstrass models admitting a
reduction matrix $U$ such that $(1,0)U^{-1}=P$.
To transfer the mass estimates from the preceding section, we
also consider the models obtained from its fundamental multiset.
The discriminant identity $\Delta(E)=16\Disc(f)$ and a uniform
bound on the number of preimages allow us to transfer escape
and non-escape to these families. Finally, we control the
multiplicity arising when the marked solution is forgotten.
We begin with two elementary normalization lemmas.

\begin{lemma}\label{lem:monic-from-solution}
Let $P=(\alpha,\beta)\in\mathbb Z^2$ be primitive, and let
\[
F(X,Y)=aX^3+bX^2Y+cXY^2+dY^3
\]
be an integral binary cubic form satisfying
$F(\alpha,\beta)=1.$
Then there exists $M\in\SL_2(\mathbb Z)$ such that $M\cdot F$ is monic.
\end{lemma}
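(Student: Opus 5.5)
The plan is to build $M$ explicitly from a B\'ezout relation for $P$ and then use the defining formula of the twisted action. Since $\gcd(\alpha,\beta)=1$, I will choose $\gamma,\delta\in\mathbb Z$ with $\alpha\delta-\beta\gamma=1$ and consider
\[
U=\begin{pmatrix}\alpha&\beta\\ \gamma&\delta\end{pmatrix}\in\SL_2(\mathbb Z).
\]
By construction, the first row of $U$ is $P$, so $(1,0)U=(\alpha,\beta)=P$. This is the same mechanism as in the Weierstrass discussion, where $P_E=(1,0)U^{-1}$ transports the solution $(1,0)$.

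Next I set $M=U$. Because $\det U=1$, the twisted action reduces to substitution:
\[
(M\cdot F)(X,Y)=F((X,Y)U)=F(\alpha X+\gamma Y,\ \beta X+\delta Y).
\]
The leading coefficient of a cubic form $G$ is $G(1,0)$. Evaluating at $(1,0)$ therefore gives
\[
(M\cdot F)(1,0)=F((1,0)U)=F(\alpha,\beta)=1.
\]
So $M\cdot F$ has $X^3$-coefficient $1$, and it is integral because $M$ has integer entries. Hence $M\cdot F$ is monic.

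There is no real obstacle in this argument. The only point needing care is the row-vector convention $(X,Y)g$: with it, the image of $(1,0)$ under $M$ is the first row of $M$, so that row must equal $P$ rather than the first column. I will also check the determinant factor, which is $1$ here and so introduces no sign.

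For later use in the correspondence, I will record that the choice of $(\gamma,\delta)$ is unique up to replacing $U$ by $n(m)U$ with $m\in\mathbb Z$, where $n(m)=\begin{pmatrix}1&0\\ m&1\end{pmatrix}$. This replacement substitutes $X\mapsto X+mY$ in $M\cdot F$, which shifts the $X^2Y$-coefficient by $3m$. That is exactly the freedom later normalized modulo $3$ to obtain $u\in\{0,1,2\}$.
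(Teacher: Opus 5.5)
Your proposal is correct and is essentially the paper's own proof: a B\'ezout completion of $P$ to a matrix $M\in\SL_2(\mathbb Z)$ with first row $P$, then reading off the leading coefficient as $(M\cdot F)(1,0)=F((1,0)M)=F(\alpha,\beta)=1$. Your closing remark about the freedom $M\mapsto n(m)M$ shifting the $X^2Y$-coefficient by $3m$ also matches what the paper later uses in Lemma~\ref{lem:b-mod-3} and Lemma~\ref{lem:independent-completion}.
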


\begin{proof}
Since $\gcd(\alpha,\beta)=1$, there exist $\gamma,\delta\in\mathbb Z$ such that
$\alpha\delta-\beta\gamma=1.$
Set
$M=
\begin{pmatrix}
\alpha&\beta\\
\gamma&\delta
\end{pmatrix}
\in\SL_2(\mathbb Z).$
The leading coefficient of $M\cdot F$ is
$(M\cdot F)(1,0)
=
F((1,0)M)
=
F(\alpha,\beta)
=
1.$
Thus $M\cdot F$ is monic.
\end{proof}

\begin{lemma}\label{lem:b-mod-3}
Let
\[
F(X,Y)
=
X^3+bX^2Y+cXY^2+dY^3
\in\mathbb Z[X,Y].
\]
Then there exists $T_k\in\SL_2(\mathbb Z)$ such that $T_k\cdot F$ is monic and its coefficient of $X^2Y$ belongs to $\{0,1,2\}$.
\end{lemma}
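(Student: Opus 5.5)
The plan is to use the unipotent translations that fix $(1,0)$, namely the matrices $n(k)=\left(\begin{smallmatrix}1&0\\k&1\end{smallmatrix}\right)$ with $k\in\mathbb Z$, which is exactly the notation $T_k$ suggests. These matrices have determinant $1$. Under the twisted action they send $F(X,Y)$ to $F((X,Y)n(k))=F(X+kY,Y)$. In particular $(T_k\cdot F)(1,0)=F(1,0)=1$, so monicity is preserved automatically.

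The key step is to compute the $X^2Y$-coefficient of $F(X+kY,Y)$. Expanding,
\[
(X+kY)^3+b(X+kY)^2Y+c(X+kY)Y^2+dY^3
=
X^3+(b+3k)X^2Y+(c+2bk+3k^2)XY^2+(d+ck+bk^2+k^3)Y^3.
\]
Hence the new $X^2Y$-coefficient is $b+3k$. All new coefficients are integers, since $k,b,c,d\in\mathbb Z$.

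It then remains to choose $k\in\mathbb Z$ so that $b+3k\in\{0,1,2\}$. By the division algorithm, write $b=3q+r$ with $r\in\{0,1,2\}$ and take $k=-q$. With this choice $T_k=n(-q)\in\SL_2(\mathbb Z)$, $T_k\cdot F$ is monic, and its $X^2Y$-coefficient is $r$.

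There is no real obstacle here. The only point needing care is the convention: with row vectors and the twisted action, one must use the lower-triangular matrix, so that $(1,0)n(k)=(1,0)$, as in the proof of Lemma~\ref{lem:leading-coefficient-stability}. Using the upper-triangular matrix instead would alter the leading coefficient.
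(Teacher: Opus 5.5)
Your proposal is correct and matches the paper's proof: the paper also takes $T_k=\left(\begin{smallmatrix}1&0\\k&1\end{smallmatrix}\right)$ with $k=(u-b)/3$, which is your $k=-q$. It likewise observes that the action substitutes $X\mapsto X+kY$, turning the $X^2Y$-coefficient into $b+3k$ while leaving the leading coefficient equal to $1$.
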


\begin{proof}
Let $u\in\{0,1,2\}$ be the unique integer satisfying
$b\equiv u\pmod 3,$
and set
$k=\frac{u-b}{3}\in\mathbb Z.$
For
$T_k=
\begin{pmatrix}
1&0\\
k&1
\end{pmatrix},$
the action of $T_k$ amounts to replacing $X$ by $X+kY$. Hence the coefficient of $X^2Y$ in $T_k\cdot F$ is
$b+3k=u,$
while the leading coefficient remains equal to $1$.
\end{proof}

We define
\[
\mathcal K^{(i)}
=
\left\{
X^3+uX^2Y+AXY^2+BY^3:
u\in\{0,1,2\},\ A,B\in\mathbb Z
\right\}
\cap V_{\mathbb Z}^{(i)}.
\]
When irreducibility is required, we write
\[
\mathcal K_{\mathrm{irr}}^{(i)}
=
\{F\in\mathcal K^{(i)}:
F\text{ is irreducible over }\mathbb Q\}.
\]

Every form
\[
F_E(X,Y)
=
X^3+uX^2Y+AXY^2+BY^3
\in\mathcal K_{\mathrm{irr}}^{(i)}
\]
determines a normalized integral Weierstrass model \cite{silverman2009arithmetic}
\[
E:\ y^2=x^3+ux^2+Ax+B,
\qquad
u\in\{0,1,2\}.\]
Conversely, every normalized model of this form whose defining
cubic is irreducible over $\mathbb Q$ determines an element
$F_E\in\mathcal K_{\mathrm{irr}}^{(i)}$, where $i$ records the
sign of its discriminant. Throughout this section, we count
normalized equations themselves, rather than isomorphism
classes of elliptic curves.

Recall that an irreducible binary cubic form $f$ is reduced
if its shape parameter belongs to the standard modular
fundamental domain
$\mathfrak F=\{\tau\in\mathfrak H:
|\tau|\geq1,\ -\frac12\leq\Re(\tau)\leq\frac12\}$.
We reserve $\mathcal F$ for Gauss's usual fundamental domain
for $\mathrm{GL}_2(\mathbb Z)\backslash\mathrm{GL}_2(\mathbb R)$
in $\mathrm{GL}_2(\mathbb R)$.

Choose once and for all one reduced representative in each
$\SL_2(\mathbb Z)$-orbit of irreducible integral binary cubic
forms of discriminant sign $i$. We denote the resulting set of representatives by
$\mathcal R^{(i)}$. Let $E$ be a normalized integral Weierstrass model with
$F_E\in\mathcal K_{\mathrm{irr}}^{(i)}$,
and let $U\in\SL_2(\mathbb Z)$ satisfy
$f_E:=U\cdot F_E\in\mathcal R^{(i)}.$
We associate to the reduction matrix $U$ the vector
$P_E=(1,0)U^{-1}.$

\begin{lemma}\label{lem:distinguished-solution}
The vector $P_E$ is primitive and integral, and it satisfies
$f_E(P_E)=1.$
\end{lemma}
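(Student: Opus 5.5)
The plan is to verify the three claims in turn: integrality, primitivity, and the Thue equation $f_E(P_E)=1$. Each is a direct consequence of $U$ lying in $\SL_2(\mathbb Z)$ and of how the twisted action interacts with evaluation at row vectors.

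\emph{Integrality and primitivity.} Since $U\in\SL_2(\mathbb Z)$, its inverse $U^{-1}$ also lies in $\SL_2(\mathbb Z)$. Hence $P_E=(1,0)U^{-1}$ is the first row of $U^{-1}$, which is an integral vector. Write this row as $(p,q)$ and the second row as $(r,s)$. Then $ps-qr=\det U^{-1}=1$, so $\gcd(p,q)=1$. This is Bézout's identity read in reverse, and it shows that $P_E$ is primitive.

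\emph{The Thue equation.} I would use the defining formula of the twisted action, $(g\cdot F)(x,y)=\det(g)^{-1}F((x,y)g)$, with $\det U=1$. This gives $f_E(x,y)=F_E((x,y)U)$ for every row vector $(x,y)$. Substituting $(x,y)=P_E=(1,0)U^{-1}$ yields
\[
f_E(P_E)=F_E\bigl((1,0)U^{-1}U\bigr)=F_E(1,0).
\]
Since $F_E=X^3+uX^2Y+AXY^2+BY^3$ is monic, $F_E(1,0)=1$, so $f_E(P_E)=1$.

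The only point needing care is the convention: vectors are rows and the matrix acts on the right. One should check that this is consistent with $U\cdot F_E$ being computed by $F_E((X,Y)U)$, which holds by the definition of the twisted action stated in Section~\ref{sec:mass-thue}. There is no real obstacle here. The same computation appears informally in the introduction and in the discussion after Definition~\ref{def:lattice-reduction}.
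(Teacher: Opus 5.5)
Your proof is correct and follows essentially the same route as the paper: integrality and primitivity come from $U^{-1}\in\SL_2(\mathbb Z)$, and $f_E(P_E)=F_E\bigl((1,0)U^{-1}U\bigr)=F_E(1,0)=1$ comes from the twisted action with $\det U=1$. Your determinant argument for primitivity (first row of $U^{-1}$, so $ps-qr=1$) just makes explicit a step the paper leaves implicit.
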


\begin{proof}
Since $U^{-1}\in\SL_2(\mathbb Z)$, the vector
$P_E=(1,0)U^{-1}$ is primitive and integral. Moreover,
\[
f_E(P_E)
=(U\cdot F_E)((1,0)U^{-1})
=F_E((1,0)U^{-1}U)
=F_E(1,0)=1.
\]
\end{proof}

Thus a normalized integral Weierstrass model together with a choice of reduction matrix determines a marked reduced binary cubic form:
\[
(E,U)
\longmapsto
(f_E,P_E),
\qquad
f_E(P_E)=1.
\]

We now construct the inverse correspondence. Let
$f\in\mathcal R^{(i)}$ and let $P=(\alpha,\beta)\in\mathbb Z^2$
satisfy $f(P)=1$. The vector $P$ is automatically primitive.
Indeed, if $m=\gcd(\alpha,\beta)$, then the homogeneity of $f$
gives $m^3\mid f(\alpha,\beta)=1$, and hence $m=1$.

Choose a matrix
\[
M_P=\begin{pmatrix}\alpha&\beta\\\gamma&\delta\end{pmatrix}
\in\SL_2(\mathbb Z).
\]
By Lemma~\ref{lem:monic-from-solution}, the form
$G_{f,P}:=M_P\cdot f$ is monic. Write
\[
G_{f,P}(X,Y)=X^3+bX^2Y+cXY^2+dY^3.
\]
Let $u\in\{0,1,2\}$ be the unique integer satisfying
$b\equiv u\pmod 3$, and select $k\in \mathbb{Z}$ as in Lemma~\ref{lem:b-mod-3}. Define
\[
T_k=\begin{pmatrix}1&0\\k&1\end{pmatrix},
\qquad
F_{f,P}=T_k\cdot G_{f,P}.
\]
Then $F_{f,P}(X,Y)=X^3+uX^2Y+AXY^2+BY^3$ for some
$A,B\in\mathbb Z$. We associate to $(f,P)$ the normalized
integral Weierstrass model
\[
E_{f,P}:\ y^2=x^3+ux^2+Ax+B.
\]

We also define $U_{f,P}=M_P^{-1}T_{-k}$. By construction,
\[
U_{f,P}\cdot F_{f,P}
=M_P^{-1}T_{-k}\cdot\bigl(T_kM_P\cdot f\bigr)
=f.
\]
Furthermore,
\[
(1,0)U_{f,P}^{-1}
=(1,0)T_kM_P
=(1,0)M_P
=P.
\]

The construction does not depend on the choice of the second row of $M_P$.

\begin{lemma}\label{lem:independent-completion}
The form $F_{f,P}$ and the matrix $U_{f,P}$ are independent of the choice of the matrix $M_P\in\SL_2(\mathbb Z)$ whose first row is $P$.
\end{lemma}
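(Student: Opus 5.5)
Any two matrices in $\SL_2(\mathbb Z)$ with first row $P=(\alpha,\beta)$ differ by a lower unipotent matrix. If $M_P$ and $M_P'$ both have first row $P$, then $M_P'M_P^{-1}$ has first row $(1,0)$ and determinant $1$, so
\[
M_P'=\begin{pmatrix}1&0\\ j&1\end{pmatrix}M_P=T_jM_P
\]
for a unique $j\in\mathbb Z$. (Explicitly, the second rows satisfy $(\gamma',\delta')=(\gamma,\delta)+j(\alpha,\beta)$.) Since the twisted action is a left action, we get $G'_{f,P}=M_P'\cdot f=T_j\cdot G_{f,P}$.

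The plan is therefore to track what $T_j$ does to the coefficients. As in the proof of Lemma~\ref{lem:b-mod-3}, $T_j$ substitutes $X\mapsto X+jY$, keeps the form monic, and replaces the $X^2Y$-coefficient $b$ by $b'=b+3j$. In particular $b'\equiv b\pmod 3$, so the residue $u\in\{0,1,2\}$ is unchanged. The integers chosen in Lemma~\ref{lem:b-mod-3} are then
\[
k=\frac{u-b}{3},\qquad k'=\frac{u-b'}{3}=k-j .
\]
Next we use that the $T$'s form a group under multiplication, $T_aT_b=T_{a+b}$, which is immediate from the matrix form. Hence
\[
F'_{f,P}=T_{k'}M_P'\cdot f=T_{k-j}T_jM_P\cdot f=T_kM_P\cdot f=F_{f,P}.
\]
For the reduction matrix,
\[
U'_{f,P}=(M_P')^{-1}T_{-k'}=M_P^{-1}T_{-j}T_{j-k}=M_P^{-1}T_{-k}=U_{f,P}.
\]

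The only point needing care is the convention for the action. Rows act on the right, and $(g\cdot f)(X,Y)=f((X,Y)g)$ with $\det g=1$ makes $(gh)\cdot f=g\cdot(h\cdot f)$. We should check this, and confirm that $T_j$ (rather than $T_j^{-1}$ or its transpose) is the matrix realizing $X\mapsto X+jY$, so that the shift is $+3j$. Even if a convention mismatch flips the sign of $j$, the cancellation argument goes through unchanged: the $X^2Y$-coefficient still shifts by a multiple of $3$, and $k'$ absorbs exactly that shift.
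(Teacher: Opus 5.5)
Your proposal is correct and follows essentially the same route as the paper's proof. Both write $M_P'=T_jM_P$, observe that the $X^2Y$-coefficient shifts by $3j$ (so $u$ is unchanged and $k'=k-j$), and conclude via $T_aT_b=T_{a+b}$ that both $F_{f,P}$ and $U_{f,P}=M_P^{-1}T_{-k}$ are unchanged. Your check that the twisted action is a left action and that $T_j$ realizes $X\mapsto X+jY$ is accurate.
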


\begin{proof}
Let $M_P$ and $M_P'$ be two matrices in $\SL_2(\mathbb Z)$
having first row $P$. Then $M_P'=T_mM_P$ for some
$m\in\mathbb Z$, where
\[
T_m=\begin{pmatrix}1&0\\m&1\end{pmatrix}.
\]
Hence $M_P'\cdot f=T_m\cdot(M_P\cdot f)$.

Let $b$ be the coefficient of $X^2Y$ in $M_P\cdot f$.
The corresponding coefficient in $M_P'\cdot f$ is
$b'=b+3m$. Thus $b$ and $b'$ determine the same
representative $u\in\{0,1,2\}$ modulo $3$. Setting
$k=(u-b)/3$ and $k'=(u-b')/3$, we obtain $k'=k-m$.
It follows that
\[
T_{k'}\cdot(M_P'\cdot f)
=T_{k-m}T_m\cdot(M_P\cdot f)
=T_k\cdot(M_P\cdot f).
\]
Hence the normalized form $F_{f,P}$ is independent of
the choice of $M_P$. Moreover,
\[
(M_P')^{-1}T_{-k'}
=M_P^{-1}T_{-m}T_{-(k-m)}
=M_P^{-1}T_{-k}.
\]
Thus $U_{f,P}$ is also independent of this choice.
\end{proof}

The normalization construction and its independence of the
completion $M_P$ use only the condition $f(P)=1$.
Thus $F_{f,P}$, $E_{f,P}$, and $U_{f,P}$ are defined for
every irreducible integral binary cubic form $f$ equipped
with such a solution, whether or not $f$ belongs to the
chosen set $\mathcal R^{(i)}$.

Define
\[
\widetilde{\mathcal R}^{(i)}
=
\left\{
(f,P):
f\in\mathcal R^{(i)},\
P\in\mathbb Z^2,\
f(P)=1
\right\},
\]
and
\[
\widetilde{\mathcal E}^{(i)}
=
\left\{
(E,U):
F_E\in\mathcal K_{\mathrm{irr}}^{(i)},\
U\in\SL_2(\mathbb Z),\
U\cdot F_E\in\mathcal R^{(i)}
\right\}.
\]

\begin{proposition}\label{prop:direct-correspondence}
The map
\[
\Phi:
\widetilde{\mathcal E}^{(i)}
\longrightarrow
\widetilde{\mathcal R}^{(i)}
\]
defined by
\[
\Phi(E,U)
=
\bigl(U\cdot F_E,(1,0)U^{-1}\bigr)
\]
is a bijection. Its inverse is given by
\[
(f,P)
\longmapsto
(E_{f,P},U_{f,P}).
\]
\end{proposition}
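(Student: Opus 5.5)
The plan is to check four things in order: $\Phi$ lands in $\widetilde{\mathcal R}^{(i)}$, the proposed inverse $\Psi(f,P)=(E_{f,P},U_{f,P})$ lands in $\widetilde{\mathcal E}^{(i)}$, and both composites are the identity. All the needed ingredients are already in place.

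\textbf{Well-definedness of $\Phi$.} For $(E,U)\in\widetilde{\mathcal E}^{(i)}$ we have $U\cdot F_E\in\mathcal R^{(i)}$ by definition. Lemma~\ref{lem:distinguished-solution} shows that $(1,0)U^{-1}$ is an integral solution of $(U\cdot F_E)(P)=1$. So $\Phi(E,U)\in\widetilde{\mathcal R}^{(i)}$.

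\textbf{Well-definedness of $\Psi$.} Let $(f,P)\in\widetilde{\mathcal R}^{(i)}$. By Lemmas~\ref{lem:monic-from-solution} and~\ref{lem:b-mod-3}, $F_{f,P}=T_kM_P\cdot f$ is monic with $X^2Y$-coefficient in $\{0,1,2\}$. Since $T_kM_P\in\SL_2(\mathbb Z)$, it has the same discriminant as $f$. It is also irreducible, because $\SL_2(\mathbb Z)$ preserves irreducibility over $\mathbb Q$. Hence $F_{f,P}\in\mathcal K_{\mathrm{irr}}^{(i)}$. The identity $U_{f,P}\cdot F_{f,P}=f\in\mathcal R^{(i)}$ computed above shows $(E_{f,P},U_{f,P})\in\widetilde{\mathcal E}^{(i)}$. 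Lemma~\ref{lem:independent-completion} makes $\Psi$ independent of the choice of $M_P$.

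\textbf{$\Phi\circ\Psi=\mathrm{id}$.} This is immediate from the two displayed identities $U_{f,P}\cdot F_{f,P}=f$ and $(1,0)U_{f,P}^{-1}=P$, using $F_{E_{f,P}}=F_{f,P}$.

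\textbf{$\Psi\circ\Phi=\mathrm{id}$.} This is the only step needing an argument. Start with $(E,U)$ and put $f=U\cdot F_E$ and $P=(1,0)U^{-1}$.
\begin{itemize}
\item Since $P$ is the first row of $U^{-1}\in\SL_2(\mathbb Z)$, we may take $M_P=U^{-1}$; this is allowed by Lemma~\ref{lem:independent-completion}.
\item Then $G_{f,P}=M_P\cdot f=U^{-1}U\cdot F_E=F_E$.
\item $F_E$ already has $X^2Y$-coefficient $u\in\{0,1,2\}$, so $k=0$ and $T_0=I$.
\item Therefore $F_{f,P}=F_E$, which gives $E_{f,P}=E$, and $U_{f,P}=M_P^{-1}T_0=U$.
\end{itemize}

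The main thing to watch is the action convention. We must confirm that $(g\cdot F)(1,0)=F((1,0)g)$ for $\det g=1$, and that the twisted action is a left action, so $(gh)\cdot F=g\cdot(h\cdot F)$. This compatibility is what makes the manipulations $T_{-k}\cdot(T_k\cdot G)=G$ and $U^{-1}\cdot(U\cdot F_E)=F_E$ legitimate. With the row-vector convention $(g\cdot F)(x)=\det(g)^{-1}F(xg)$, we get $(g\cdot(h\cdot F))(x)=F(xgh)=((gh)\cdot F)(x)$ when determinants are $1$. So the action is indeed a left action, and the argument goes through.
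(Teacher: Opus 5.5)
Your proof is correct and follows essentially the same route as the paper. You take $M_P=U^{-1}$ so that $k=0$ recovers $(E,U)$, and you use the identities $U_{f,P}\cdot F_{f,P}=f$ and $(1,0)U_{f,P}^{-1}=P$ for the other composite; your explicit checks that both maps land in the right sets and that the twisted action is a left action are details the paper leaves implicit.
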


\begin{proof}
Let $(E,U)\in\widetilde{\mathcal E}^{(i)}$ and set
$f=U\cdot F_E$ and $P=(1,0)U^{-1}$. In the inverse
construction, we may choose $M_P=U^{-1}$, since its
first row is $P$. Then
\[
M_P\cdot f=U^{-1}\cdot(U\cdot F_E)=F_E.
\]
The coefficient of $X^2Y$ in $F_E$ already belongs to
$\{0,1,2\}$. Hence the normalization parameter is $k=0$,
and the inverse construction recovers both $E$ and $U$.

Conversely, starting with
$(f,P)\in\widetilde{\mathcal R}^{(i)}$, the construction gives
$U_{f,P}\cdot F_{f,P}=f$ and $(1,0)U_{f,P}^{-1}=P$.
Therefore, $\Phi(E_{f,P},U_{f,P})=(f,P)$.
\end{proof}

We now fix a primitive vector
\[
P=(\alpha,\beta)\in\mathbb Z^2
\]
and define
\[
\mathcal R_P^{(i)}
=
\{f\in\mathcal R^{(i)}:f(P)=1\}.
\]
On the Weierstrass-model side, define
\[
\mathcal E_P^{(i)}
=
\left\{
E:
\begin{array}{l}
F_E\in\mathcal K_{\mathrm{irr}}^{(i)},\ \text{and there exists }
U\in\SL_2(\mathbb Z)\\[2pt]
\text{such that }
U\cdot F_E\in\mathcal R^{(i)}
\text{ and }
(1,0)U^{-1}=P
\end{array}
\right\}.
\]

\begin{corollary}\label{cor:fixed-P-correspondence}
For every primitive vector $P$, the map
\[
f\longmapsto E_{f,P}
\]
is a bijection
\[
\mathcal R_P^{(i)}
\longrightarrow
\mathcal E_P^{(i)}.
\]
\end{corollary}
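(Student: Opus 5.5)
The plan is to obtain Corollary~\ref{cor:fixed-P-correspondence} by restricting the bijection $\Phi$ of Proposition~\ref{prop:direct-correspondence} to the fibre over the fixed marking $P$. Consider the projection $\widetilde{\mathcal R}^{(i)}\to\mathbb Z^2$, $(f,Q)\mapsto Q$. Its fibre over $P$ is identified with $\mathcal R_P^{(i)}$ via $f\mapsto(f,P)$. On the other side, put
\[
\widetilde{\mathcal E}_P^{(i)}=\{(E,U)\in\widetilde{\mathcal E}^{(i)}:(1,0)U^{-1}=P\}.
\]
Since $\Phi(E,U)$ has second coordinate $(1,0)U^{-1}$, the map $\Phi$ restricts to a bijection $\widetilde{\mathcal E}_P^{(i)}\to\{(f,P):f\in\mathcal R_P^{(i)}\}$. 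Its inverse is $f\mapsto(E_{f,P},U_{f,P})$.

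Next I compose this with the forgetful map $(E,U)\mapsto E$. By the definition of $\mathcal E_P^{(i)}$, the image of $\widetilde{\mathcal E}_P^{(i)}$ under this map is exactly $\mathcal E_P^{(i)}$. So $f\mapsto E_{f,P}$ is a surjection $\mathcal R_P^{(i)}\to\mathcal E_P^{(i)}$. It is well defined by Lemma~\ref{lem:independent-completion}, and lands in $\mathcal K^{(i)}_{\mathrm{irr}}$ because $F_{f,P}$ is $\SL_2(\mathbb Z)$-equivalent to $f$, so irreducibility and discriminant sign are preserved.

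The remaining point, and the main obstacle, is injectivity: the forgetful map must not identify two pairs $(E,U)$, $(E,U')$ with the same $E$. Suppose $E_{f,P}=E_{f',P}=E$, so $F_{f,P}=F_{f',P}=F_E$. Then $f=U\cdot F_E$ and $f'=U'\cdot F_E$ with $U=U_{f,P}$ and $U'=U_{f',P}$, both satisfying $(1,0)U^{-1}=(1,0)U'^{-1}=P$.

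Hence $f$ and $f'$ lie in the same $\SL_2(\mathbb Z)$-orbit. Both belong to $\mathcal R^{(i)}$, which contains exactly one representative per $\SL_2(\mathbb Z)$-orbit, so $f=f'$. This argument needs only the orbit-representative property of $\mathcal R^{(i)}$ and does not require $U=U'$. That is exactly what the statement needs, since it concerns the map on $f$ and not on pairs. I would still remark that $U'U^{-1}$ lies in $\Stab_{\SL_2(\mathbb Z)}(f)$ and fixes $P$, but this is not needed. Thus $f\mapsto E_{f,P}$ is injective, and hence a bijection.
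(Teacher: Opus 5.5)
Your proof is correct and follows the paper's argument. Surjectivity comes from Proposition~\ref{prop:direct-correspondence}, which you make explicit by restricting $\Phi$ to the fibre over $P$; injectivity comes from the fact that $f$ and $f'$ are $\SL_2(\mathbb Z)$-equivalent elements of $\mathcal R^{(i)}$. The paper additionally checks that the reduction matrix $U$ is unique (two such matrices differ by a lower unipotent matrix, which the normalization $u\in\{0,1,2\}$ forces to be trivial), but, as you observe, this is not needed for the bijectivity of $f\mapsto E_{f,P}$.
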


\begin{proof}
Surjectivity follows from Proposition~\ref{prop:direct-correspondence}.

To prove injectivity, suppose that $E_{f_1,P}=E_{f_2,P}=E$
for some $f_1,f_2\in\mathcal R_P^{(i)}$. By construction, there
exist $U_1,U_2\in\SL_2(\mathbb Z)$ such that
$f_1=U_1\cdot F_E$ and $f_2=U_2\cdot F_E$. Thus $f_1$ and $f_2$
lie in the same $\SL_2(\mathbb Z)$-orbit. Since $\mathcal R^{(i)}$
contains exactly one chosen representative from each orbit,
we obtain $f_1=f_2$.

It remains to verify that, once $E$, $f$, and $P$ are fixed,
the matrix $U$ is uniquely determined. Suppose that
$U_1\cdot F_E=U_2\cdot F_E=f$ and
$(1,0)U_1^{-1}=(1,0)U_2^{-1}=P$.
Set $G=U_2^{-1}U_1$. Then $G\cdot F_E=F_E$ and
$(1,0)G^{-1}=(1,0)$. Consequently,
\[
G=\begin{pmatrix}1&0\\m&1\end{pmatrix}
\qquad\text{for some }m\in\mathbb Z.
\]
Let $u\in\{0,1,2\}$ be the coefficient of $X^2Y$ in $F_E$.
The coefficient of $X^2Y$ in $G\cdot F_E$ is $u+3m$.
Since $G\cdot F_E=F_E$, we obtain $m=0$.
Hence $G=I_2$ and $U_1=U_2$.
\end{proof}

For
\[
E\in\mathcal E_P^{(i)},
\]
let
\[
f_{E,P}\in\mathcal R_P^{(i)}
\]
denote the unique reduced marked form corresponding to $E$ under Corollary~\ref{cor:fixed-P-correspondence}.

We now transfer the mass estimates using the same fundamental
multiset as in the preceding section. Fix a primitive vector
$P$, and retain the constants $0<c_-<c_+$ used there. Let $\mathscr S_P^{(i)}(X)$ be the multiset of forms
$f\in\mathcal Fv^{(i)}\cap V_{\mathbb Z}^{(i),\mathrm{irr}}$
such that $f(P)=1$ and
$c_-X\leq|\Disc(f)|<c_+X$.
Let $\mathscr S_{P,T}^{(i)}(X)$ be its cuspidal submultiset,
obtained by requiring $f\in\mathcal F_Tv^{(i)}$.
Thus
\[
\#\mathscr S_P^{(i)}(X)=N_P^{(i)}(X),
\qquad
\#\mathscr S_{P,T}^{(i)}(X)=N_{P,T}^{(i)}(X).
\]

Define the sets of distinct normalized models
\[
\mathscr E_P^{(i)}(X)
=
\{E_{f,P}:f\in\mathscr S_P^{(i)}(X)\},
\qquad
\mathscr E_{P,T}^{(i)}(X)
=
\{E_{f,P}:f\in\mathscr S_{P,T}^{(i)}(X)\},
\]
and put
$C_P^{(i)}(X)=\#\mathscr E_P^{(i)}(X)$ and
$C_{P,T}^{(i)}(X)=\#\mathscr E_{P,T}^{(i)}(X)$.
Here membership in $\mathscr E_{P,T}^{(i)}(X)$ means that
the model arises from at least one cuspidal representative
satisfying $f(P)=1$.

Since $F_{f,P}$ is $\SL_2(\mathbb Z)$-equivalent to $f$,
we have $\Delta(E_{f,P})=16\Disc(f)$.
The corresponding discriminant interval for the models is
therefore $16c_-X\leq|\Delta(E)|<16c_+X$.

For a fixed model $E$, every preimage under
$f\mapsto E_{f,P}$ belongs to the
$\GL_2(\mathbb Z)$-orbit of $F_E$.
This orbit occurs in $\mathcal Fv^{(i)}$ with total
multiplicity at most $n_i$. Consequently,
\[
\frac{1}{n_i}N_P^{(i)}(X)
\leq C_P^{(i)}(X)\leq N_P^{(i)}(X),
\qquad
C_{P,T}^{(i)}(X)\leq N_{P,T}^{(i)}(X).
\]
Moreover, every model outside $\mathscr E_{P,T}^{(i)}(X)$
has a preimage outside $\mathscr S_{P,T}^{(i)}(X)$, so
\[
C_P^{(i)}(X)-C_{P,T}^{(i)}(X)
\leq N_P^{(i)}(X)-N_{P,T}^{(i)}(X).
\]

\begin{theorem}[Escape and non-escape of mass for Weierstrass models]
\label{cor:mass-transfer}
Let $i\in\{0,1\}$.
For $P\in\{(1,0),(-1,0)\}$, the model counts defined above
satisfy
\[
\lim_{T\to\infty}\liminf_{X\to\infty}
\frac{C_{P,T}^{(i)}(X)}{C_P^{(i)}(X)}=1.
\]
For every other fixed primitive vector $P$, they satisfy
\[
\lim_{T\to\infty}\limsup_{X\to\infty}
\frac{C_{P,T}^{(i)}(X)}{C_P^{(i)}(X)}=0.
\]
\end{theorem}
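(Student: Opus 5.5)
The plan is to deduce both statements from Theorems~\ref{thm:escape-e1} and~\ref{thm:non-escape-fixed-P}. The only inputs needed are the multiplicity inequalities recorded just before the statement:
\[
\tfrac{1}{n_i}N_P^{(i)}(X)\le C_P^{(i)}(X)\le N_P^{(i)}(X),\qquad C_{P,T}^{(i)}(X)\le N_{P,T}^{(i)}(X),
\]
\[
C_P^{(i)}(X)-C_{P,T}^{(i)}(X)\le N_P^{(i)}(X)-N_{P,T}^{(i)}(X).
\]
No new lattice-point counting is required. All the losses are bounded by the fixed constant $n_i\in\{2,6\}$, and these are harmless precisely because the limits in question are $0$ and $1$.

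For $P\neq(\pm1,0)$, I will combine the upper bound on $C_{P,T}^{(i)}$ with the lower bound on $C_P^{(i)}$. This gives
\[
\frac{C_{P,T}^{(i)}(X)}{C_P^{(i)}(X)}\le n_i\,\frac{N_{P,T}^{(i)}(X)}{N_P^{(i)}(X)}.
\]
Taking $\limsup_{X\to\infty}$ and then $T\to\infty$, Theorem~\ref{thm:non-escape-fixed-P} gives the limit $0$. In fact the proof of that theorem gives the quantitative bound $O_P(n_iT^{-3})$.

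For $P=(\pm1,0)$, I will work with the complementary proportion, using the third inequality together with the lower bound on $C_P^{(i)}$:
\[
0\le 1-\frac{C_{P,T}^{(i)}(X)}{C_P^{(i)}(X)}=\frac{C_P^{(i)}(X)-C_{P,T}^{(i)}(X)}{C_P^{(i)}(X)}\le n_i\,\frac{M_{P,T}^{(i)}(X)}{N_P^{(i)}(X)}.
\]
By Propositions~\ref{prop:compact-upper-e1} and~\ref{prop:denominator-e1}, the right-hand side is $\ll n_i(T^3X^{-1/12}+T^4X^{-1/3}+X^{-1/3+\varepsilon})$. This tends to $0$ as $X\to\infty$ for each fixed $T$. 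Hence the $\liminf$ of $C_{P,T}^{(i)}/C_P^{(i)}$ equals $1$ for every $T$, and so does its limit as $T\to\infty$. An alternative is to quote Theorem~\ref{thm:escape-e1} directly: it gives $\limsup_X M_{P,T}^{(i)}/N_P^{(i)}\to 0$ as $T\to\infty$, which suffices.

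The main obstacle is the justification of the multiplicity inequalities themselves, especially the third one, since the statement rests entirely on them. The point to check is that the preimages of a fixed model $E$ under $f\mapsto E_{f,P}$ all lie in a single $\GL_2(\mathbb Z)$-orbit. This holds because each such $f$ is $\SL_2(\mathbb Z)$-equivalent to $F_E$, and by Proposition~\ref{prop:fundamental-multiplicity} that orbit meets $\mathcal Fv^{(i)}$ with total multiplicity at most $n_i$ (the order of $\Stab_{\GL_2(\mathbb R)}$).

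For the third inequality, the idea is to inject the noncuspidal models into the noncuspidal multiset. Every model in $\mathscr E_P^{(i)}(X)\setminus\mathscr E_{P,T}^{(i)}(X)$ has no cuspidal preimage, so all of its preimages lie in $\mathscr S_P^{(i)}(X)\setminus\mathscr S_{P,T}^{(i)}(X)$. Choosing one preimage for each such model gives an injection, because distinct models have disjoint preimage sets. I will also note that the discriminant condition transfers exactly, since $\Delta(E_{f,P})=16\Disc(f)$. Once these points are verified, the argument above is purely formal.
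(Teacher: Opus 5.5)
Your proposal is correct and follows the paper's proof essentially verbatim. The paper likewise derives $C_{P,T}^{(i)}/C_P^{(i)}\le n_i\,N_{P,T}^{(i)}/N_P^{(i)}$ and $1-C_{P,T}^{(i)}/C_P^{(i)}\le n_i\bigl(1-N_{P,T}^{(i)}/N_P^{(i)}\bigr)$ from the multiplicity inequalities, and then quotes Theorems~\ref{thm:non-escape-fixed-P} and~\ref{thm:escape-e1}; your injection argument for $C_P^{(i)}-C_{P,T}^{(i)}\le N_P^{(i)}-N_{P,T}^{(i)}$ and the explicit rate from Propositions~\ref{prop:compact-upper-e1} and~\ref{prop:denominator-e1} only spell out what the paper states just before the theorem and in the proof of Theorem~\ref{thm:escape-e1}.
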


\begin{proof}
The preceding inequalities give
\[
\frac{C_{P,T}^{(i)}(X)}{C_P^{(i)}(X)}
\leq n_i\frac{N_{P,T}^{(i)}(X)}{N_P^{(i)}(X)}
\]
and
\[
1-\frac{C_{P,T}^{(i)}(X)}{C_P^{(i)}(X)}
\leq n_i\left(
1-\frac{N_{P,T}^{(i)}(X)}{N_P^{(i)}(X)}
\right).
\]
The first inequality transfers non-escape from
Theorem~\ref{thm:non-escape-fixed-P}; the second transfers
escape from Theorem~\ref{thm:escape-e1}.
\end{proof}

The marked solution is essential for obtaining an exact correspondence. When the marking is forgotten, the multiplicity is controlled by the following theorem of Evertse.

\begin{theorem}[Uniform bound for Thue solutions]
\label{Evertse}
Let $f(X,Y)$ be an irreducible integral binary cubic form
with nonzero discriminant. Then $f(X,Y)=1$ has at most
twelve integral solutions.
\end{theorem}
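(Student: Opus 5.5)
We will not reprove this from scratch. It is a classical result, and the plan is to cite it and outline the standard argument, split by the sign of the discriminant as in the rest of the paper. First reduce to a unit equation. Write $f(X,Y)=a(X-\theta_1Y)(X-\theta_2Y)(X-\theta_3Y)$ and let $\theta=\theta_1$. By Proposition~\ref{prop:domain-irreducible}, irreducibility of $f$ makes $R(f)$ an order in the cubic field $K=\mathbb Q(\theta)$. A solution $(x,y)$ of $f(x,y)=1$ then gives an element $\xi=x-y\theta$ with $N_{K/\mathbb Q}(\xi)=1/a$. After multiplying by the appropriate fixed element, such as $a$ or the standard basis element of $R(f)$ of the Delone--Faddeev correspondence, $\xi$ becomes a unit of $R(f)$ lying in the rank-two sublattice spanned by $1$ and $\theta$. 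So solutions correspond to units $\varepsilon$ of $R(f)$ whose coordinate along the third basis vector vanishes.

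\emph{Negative discriminant.} Here the unit group of $R(f)$ has rank one, so $\varepsilon=\pm\eta^n$ for a fundamental unit $\eta$. The vanishing-coordinate condition becomes a single exponential equation in $n$. We follow the Delone--Nagell method: expand $\eta^n$ in the basis, work $p$-adically (Skolem's method) or use Delone's ``algorithm of ascent'', and show that at most a bounded number of exponents $n$ can satisfy it. This yields at most $5$ solutions, which is the bound $B_1=5$ already used in Proposition~\ref{prop:J-order} and is certainly at most twelve.

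\emph{Positive discriminant.} The unit rank is two, so Skolem's method alone no longer bounds the count. We therefore follow Evertse's strategy instead. Each solution with $|y|$ large satisfies $|x/y-\theta_j|\ll |\Disc f|^{1/2}|y|^{-3}$ for exactly one root $\theta_j$. We split the solutions into three types. \emph{Small solutions}, with $|y|$ below a fixed power of $|\Disc(f)|$: we first move $f$ to a reduced representative, which is harmless because $\GL_2(\mathbb Z)$ permutes solutions. A gap principle then shows that consecutive solutions attached to the same root have $|y|$ growing at least geometrically, and in fact like a power, so each root receives only a bounded number of such solutions. \emph{Large solutions}: we use an effective Thue--Siegel principle, such as Thue's or Bombieri's hypergeometric version. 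This bounds the number of exceptionally good rational approximations to each real cubic irrationality $\theta_j$, giving a bounded number per root beyond the gap range. \emph{Very small solutions}, those with $|y|\le 1$ or $|x|\le1$: we count these directly. Adding up the three contributions per root over the three real roots, and tracking the constants carefully, gives the total bound $12$.

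The main obstacle is the large-solution step for positive discriminant. To get a constant as sharp as $12$, the Thue--Siegel-type inequality must be made completely explicit and uniform in $f$. It must also be combined with the gap principle so that the ranges covered by the two arguments overlap with no gap. Rather than redo that bookkeeping, we will follow Evertse \cite{Evertse1983} for this step, and quote Delone and Nagell \cite{delaunay1930darstellung,nagell1928darstellung} for the negative-discriminant case.
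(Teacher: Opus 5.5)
Your proposal is correct and takes the same approach as the paper, which also proves this purely by citation, split by the sign of the discriminant: Evertse's bound of twelve for $\Disc(f)>0$ and the Delone--Nagell bound of five for $\Disc(f)<0$. The supporting sketch is not needed, but if you keep it, fix the unit reduction: when $|a|>1$, multiplying $x-y\theta$ by $a$ gives norm $a^2$, not a unit. The clean route is to note that a solution lets you make $f$ monic via Lemma~\ref{lem:monic-from-solution}, after which the solutions are exactly the norm-one units of $\mathbb Z[\theta]$ of the form $x-y\theta$.
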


For positive discriminant, this is Evertse's bound
\cite{Evertse1983}. For negative discriminant, the classical
bound of Delone \cite{delaunay1930darstellung} and Nagell \cite{nagell1928darstellung} is five; see also the discussion
in \cite{Evertse1983}.

\begin{remark}
When the marked solution is forgotten, a fixed irreducible binary
cubic form $f$ may occur in several pairs $(f,P)$. By
Theorem~\ref{Evertse}, the equation
\[
f(P)=1
\]
has at most twelve integral solutions. Hence the forgetful map
\[
(f,P)\longmapsto f
\]
has fibres of size at most twelve.

When forms are counted in the fundamental multiset
$\mathcal Fv^{(i)}$, each $\GL_2(\mathbb Z)$-orbit occurs
with total multiplicity at most $n_i$.
Consequently, the map $(f,P)\mapsto[f]$ that forgets
the marking and retains only the $\GL_2(\mathbb Z)$-orbit
has fibres of size at most $12n_i$, with occurrences
counted according to the multiset multiplicity.
\end{remark}

We now transfer the averaged non-escape theorem to normalized
integral Weierstrass models. Fix $0<\delta<1/12$, and let
$Y=Y(X)$ be real and satisfy
\[
2\leq Y\leq X^{1/12-\delta}.
\]
Recall that
\[
\mathcal P(Y)
=
\left\{
(\alpha,\beta)\in\mathbb Z_{\mathrm{prim}}^2:
Y\leq|\alpha|,|\beta|<2Y
\right\}.
\]

Let
\[
\mathscr M^{(i)}(X;Y)
\]
denote the multiset of pairs $(f,P)$ such that
\[
P\in\mathcal P(Y),
\qquad
f\in\mathcal Fv^{(i)}
\cap V_{\mathbb Z}^{(i),\mathrm{irr}},
\qquad
f(P)=1,
\qquad
c_-X\leq|\Disc(f)|<c_+X.
\]
Similarly, let
\[
\mathscr M_T^{(i)}(X;Y)
\]
denote the submultiset consisting of the pairs for which
\[
f\in\mathcal F_Tv^{(i)}.
\]
Thus
\[
\#\mathscr M^{(i)}(X;Y)
=
J^{(i)}(X;Y)
\]
and
\[
\#\mathscr M_T^{(i)}(X;Y)
=
J_T^{(i)}(X;Y).
\]

For every pair $(f,P)$ in $\mathscr M^{(i)}(X;Y)$, the normalization
procedure of Proposition~\ref{prop:direct-correspondence} produces a
normalized integral Weierstrass model
\[
E_{f,P}:\ y^2=x^3+ux^2+Ax+B,
\qquad
u\in\{0,1,2\}.
\]
Define
\[
\mathscr E^{(i)}(X;Y)
=
\left\{
E_{f,P}:
(f,P)\in\mathscr M^{(i)}(X;Y)
\right\},
\]
where the right-hand side is regarded as a set of distinct normalized
integral Weierstrass models. Define similarly
\[
\mathscr E_T^{(i)}(X;Y)
=
\left\{
E_{f,P}:
(f,P)\in\mathscr M_T^{(i)}(X;Y)
\right\}.
\]
Thus a model belongs to $\mathscr E_T^{(i)}(X;Y)$ if at
least one of its preimages in $\mathscr M^{(i)}(X;Y)$
belongs to the cuspidal submultiset.
Set
\[
C^{(i)}(X;Y)
:=
\#\mathscr E^{(i)}(X;Y),
\qquad
C_T^{(i)}(X;Y)
:=
\#\mathscr E_T^{(i)}(X;Y).
\]

\begin{theorem}\label{thm:non-escape-averaged-models}
Fix $0<\delta<1/12$, and let $Y=Y(X)$ be real and satisfy
\[
2\leq Y\leq X^{1/12-\delta}.
\]
Then
\[
\lim_{T\to\infty}
\limsup_{X\to\infty}
\frac{C_T^{(i)}(X;Y)}
{C^{(i)}(X;Y)}
=
0.
\]
Thus the distinct normalized integral Weierstrass models arising from
solutions
\[
P\in\mathcal P(Y)
\]
exhibit averaged non-escape of mass.
\end{theorem}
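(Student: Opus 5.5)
The plan is to transfer Theorem~\ref{thm:non-escape-averaged} to models by comparing $C^{(i)}(X;Y)$ and $C_T^{(i)}(X;Y)$ with $J^{(i)}(X;Y)$ and $J_T^{(i)}(X;Y)$ through two multiplicity inequalities. This is the same strategy as in Theorem~\ref{cor:mass-transfer}, except that the marking now varies over $\mathcal P(Y)$. The trivial direction is the cusp upper bound
\[
C_T^{(i)}(X;Y)\le \#\mathscr M_T^{(i)}(X;Y)=J_T^{(i)}(X;Y).
\]
It holds because $\mathscr E_T^{(i)}(X;Y)$ is by definition the image of $\mathscr M_T^{(i)}(X;Y)$ under $(f,P)\mapsto E_{f,P}$, and the image of a multiset has at most as many distinct elements as the multiset.

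For the denominator I will bound the fibres of $(f,P)\mapsto E_{f,P}$ on $\mathscr M^{(i)}(X;Y)$ uniformly. Fix a model $E$ and a preimage $(f,P)$. By construction $F_{f,P}=F_E$ is $\SL_2(\mathbb Z)$-equivalent to $f$, so $f$ lies in the $\GL_2(\mathbb Z)$-orbit of $F_E$. By Proposition~\ref{prop:fundamental-multiplicity}, that orbit occurs in $\mathcal Fv^{(i)}$ with total multiplicity at most $n_i$. For each such occurrence $f$, the marking $P$ must satisfy $f(P)=1$. By Theorem~\ref{Evertse}, there are at most twelve such $P$, and this count does not depend on whether $P\in\mathcal P(Y)$. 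Hence every fibre has size at most $12n_i$, counted with multiset multiplicity. This gives
\[
C^{(i)}(X;Y)\ge \frac{1}{12n_i}J^{(i)}(X;Y).
\]
The point to check carefully is that two pairs $(f,P)$ and $(f',P')$ with the same $E$ really have $f,f'$ in one orbit, via $U_{f,P}\cdot F_E=f$. This follows from Proposition~\ref{prop:direct-correspondence} and the remark after Lemma~\ref{lem:independent-completion}, which shows the construction is valid for every $f$ in the multiset, not only for $f\in\mathcal R^{(i)}$.

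Combining the two bounds gives
\[
\frac{C_T^{(i)}(X;Y)}{C^{(i)}(X;Y)}\le 12n_i\,\frac{J_T^{(i)}(X;Y)}{J^{(i)}(X;Y)}.
\]
The denominators are positive for large $X$ by Corollary~\ref{cor:averaged-count-order}. Taking $\limsup_{X\to\infty}$ and then $T\to\infty$, Theorem~\ref{thm:non-escape-averaged} makes the right side tend to $0$, since the constant $12n_i$ does not depend on $X$, $Y$ or $T$.

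The main obstacle is the uniform fibre bound: one must make sure the multiplicities from the fundamental multiset and from distinct markings combine multiplicatively rather than in some uncontrolled way. The discriminant window causes no difficulty, because $\Delta(E_{f,P})=16\Disc(f)$ depends only on the orbit.
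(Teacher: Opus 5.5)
Your proposal is correct and follows the paper's proof essentially verbatim. You use the surjection $(f,P)\mapsto E_{f,P}$ to get $C_T^{(i)}(X;Y)\le J_T^{(i)}(X;Y)$, and the fibre bound $12n_i$ (orbit multiplicity at most $n_i$ times at most twelve Thue solutions) to get $J^{(i)}(X;Y)\le 12n_i\,C^{(i)}(X;Y)$; the resulting ratio inequality then transfers Theorem~\ref{thm:non-escape-averaged}, and your remarks on positivity of the denominators and on defining $E_{f,P}$ for representatives outside $\mathcal R^{(i)}$ only make explicit what the paper leaves implicit.
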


\begin{proof}
The map
\[
\pi:
\mathscr M^{(i)}(X;Y)
\longrightarrow
\mathscr E^{(i)}(X;Y),
\qquad
(f,P)\longmapsto E_{f,P},
\]
is surjective by definition. We first bound the cardinality of its
fibres.

Fix a normalized integral Weierstrass model
\[
E\in\mathscr E^{(i)}(X;Y).
\]
Every pair $(f,P)\in\pi^{-1}(E)$ has $f$ in the
$\GL_2(\mathbb Z)$-orbit of the binary cubic form $F_E$ associated to
$E$. Since $\mathcal Fv^{(i)}$ is a fundamental multiset of
multiplicity at most $n_i$, this orbit occurs in
$\mathcal Fv^{(i)}$ with total multiplicity at most $n_i$.

For each such representative $f$, Theorem~\ref{Evertse} shows that
the equation
\[
f(P)=1
\]
has at most twelve integral solutions. Restricting to
$P\in\mathcal P(Y)$ can only decrease this number. Therefore,
\[
\#\pi^{-1}(E)\leq 12n_i.
\]
It follows that
\[
C^{(i)}(X;Y)
\leq
J^{(i)}(X;Y)
\leq
12n_iC^{(i)}(X;Y).
\]
In particular,
\[
C^{(i)}(X;Y)
\geq
\frac{1}{12n_i}J^{(i)}(X;Y).
\]

The restriction of $\pi$ to
$\mathscr M_T^{(i)}(X;Y)$ is also surjective onto
$\mathscr E_T^{(i)}(X;Y)$. Hence
\[
C_T^{(i)}(X;Y)
\leq
J_T^{(i)}(X;Y).
\]
Combining the preceding inequalities, we obtain
\[
\frac{C_T^{(i)}(X;Y)}
{C^{(i)}(X;Y)}
\leq
12n_i
\frac{J_T^{(i)}(X;Y)}
{J^{(i)}(X;Y)}.
\]

By Theorem~\ref{thm:non-escape-averaged},
\[
\lim_{T\to\infty}
\limsup_{X\to\infty}
\frac{J_T^{(i)}(X;Y)}
{J^{(i)}(X;Y)}
=
0.
\]
It follows that
\[
\lim_{T\to\infty}
\limsup_{X\to\infty}
\frac{C_T^{(i)}(X;Y)}
{C^{(i)}(X;Y)}
=
0.
\]
\end{proof}
\begin{remark}[Non-escape and equidistribution]
\label{rem:non-escape-without-equidistribution}
Non-escape of mass does not always imply
equidistribution. For a fixed solution
$P=(\alpha,\beta)\neq(\pm1,0)$,
Theorem~\ref{thm:non-escape-fixed-P} establishes non-escape,
while the condition $f(P)=1$ confines the forms to the
affine hyperplane
$\mathcal H_P=\{(a,b,c,d):
a\alpha^3+b\alpha^2\beta+c\alpha\beta^2+d\beta^3=1\}$.
Thus the family lies on the slice
$\mathcal H_P\cap\mathcal Fv^{(i)}$.

Suppose that the contribution from solutions in each dyadic
range $Z\leq H(P)<2Z$ is $O(X^{3/4}/Z)$ (see Theorem~\ref{thm:non-escape-averaged}), uniformly over
the ranges under consideration. Summing over $Z=2^jY$ gives
\[
\sum_{j\geq0}\frac{CX^{3/4}}{2^jY}
=
\frac{2CX^{3/4}}{Y}.
\]
As the total count is $\gg X^{3/4}$, choosing $Y$ sufficiently
large makes the proportion of forms whose integral solutions all have
height greater than $Y$ arbitrarily small. The remaining forms
lie on finitely many arithmetic slices, which have measure zero. One can choose a finite union of balls
away from the limiting slices with positive proportional
volume exceeding this exceptional proportion. Such a region
contains too few forms for equidistribution to hold.
\end{remark}

\section{Questions and further directions}
\label{sec:questions}

We conclude with three questions motivated by the numerical
experiments in Appendix~\ref{app:thin-families} and by the escape
and non-escape results proved above. Let
$E:y^2=x^3+Ax+B$ be an integral short Weierstrass model whose
associated binary cubic form
$F_E(X,Y)=X^3+AXY^2+BY^3$ is irreducible over $\mathbb Q$.
Fix a rule that assigns to each model a reduction matrix
$U_E\in\SL_2(\mathbb Z)$ such that
$f_E=U_E\cdot F_E$ is the chosen reduced representative.
This rule includes a choice whenever more than one reduction
matrix is possible. Define
$P_E=(1,0)U_E^{-1}$, so that $f_E(P_E)=1$.
All statements below concerning $P_E$ refer to this fixed rule.

For models with $|A|\asymp H^2$ and $|B|\asymp H^3$, the
experiments suggest a change in the distinguished solution near
the discriminant scale $|\Delta(E)|\asymp H^4$.
In the samples with discriminant larger than this scale,
$P_E$ was frequently one of the vectors $(1,0)$ or $(-1,0)$.
Below this scale, the sampled distinguished solutions often
had both coordinates nonzero and were still relatively small.
These observations motivate questions about both the size of
$P_E$ and the cuspidal behavior of the associated reduced forms.

The experiments use Cremona's reduction algorithm.
To compare the observed vectors with the vectors $P_E$ defined
here, one must match the action conventions and the choices
made during reduction. We therefore regard the experiments
as motivation for the questions below.

Fix positive constants
$a_-<a_+$, $b_-<b_+$, and $d_-<d_+$.
For $m\in(1,6)$ and $i\in\{0,1\}$, let
$\mathcal E_m^{(i)}(H)$ be the set of integral short
Weierstrass models $E:y^2=x^3+Ax+B$ satisfying
\[
a_-H^2\leq |A|<a_+H^2,\qquad
b_-H^3\leq |B|<b_+H^3,\qquad
d_-H^m\leq|\Delta(E)|<d_+H^m,
\]
with $(-1)^i\Delta(E)>0$ and $F_E$ irreducible over
$\mathbb Q$. These constants remain fixed as $H\to\infty$.
We count models as equations, rather than up to isomorphism.
Set
\[
C_m^{(i)}(H)=\#\mathcal E_m^{(i)}(H),
\qquad
C_{m,T}^{(i)}(H)
=
\#\{E\in\mathcal E_m^{(i)}(H):
f_E\in\mathcal F_Tv^{(i)}\}.
\]
Every limiting assertion below is restricted to the values
of $m$ and $i$ for which $C_m^{(i)}(H)\to\infty$.

The preceding mass theorems concern families defined by
conditions on the marked solution. The families considered
here satisfy additional restrictions on $A$, $B$, and the
discriminant. Escape or non-escape for a larger family need
not hold for a thin subfamily. Thus the earlier theorems
motivate, but do not settle, the following questions.

\begin{question}[Large-discriminant escape]
\label{ques:large-discriminant-escape}
Does there exist $\delta_0\in(0,3)$ such that, for every
$\delta\in(\delta_0,3)$ and every $i\in\{0,1\}$, the family
$\mathcal E_{4+\delta}^{(i)}(H)$ exhibits escape of mass?
More precisely, is
\[
\lim_{T\to\infty}\liminf_{H\to\infty}
\frac{C_{4+\delta,T}^{(i)}(H)}
     {C_{4+\delta}^{(i)}(H)}
=1?
\]
\end{question}

This asks whether, sufficiently far above the scale $H^4$,
the proportion of models whose associated reduced form
lies in any fixed truncation tends to zero.
The question is motivated by the observed occurrence of
$P_E=(\pm1,0)$ and by Theorem~\ref{thm:escape-e1}.

\begin{question}[Small-discriminant non-escape]
\label{ques:small-discriminant-non-escape}
Does there exist $\delta_0\in(0,3)$ such that, for every
$\delta\in(\delta_0,3)$ and every $i\in\{0,1\}$, the family
$\mathcal E_{4-\delta}^{(i)}(H)$ exhibits non-escape of mass?
More precisely, is
\[
\lim_{T\to\infty}\limsup_{H\to\infty}
\frac{C_{4-\delta,T}^{(i)}(H)}
     {C_{4-\delta}^{(i)}(H)}
=0?
\]
\end{question}

This asks whether, sufficiently far below the scale $H^4$,
a sufficiently large fixed truncation contains an
arbitrarily high proportion of the associated reduced forms
as $H\to\infty$.
The numerical occurrence of relatively small distinguished
solutions different from $(\pm1,0)$ motivates comparison
with Theorem~\ref{thm:non-escape-fixed-P}.

The third question concerns averaged non-escape of mass in
small-discriminant families.

\begin{question}[Averaged non-escape in small-discriminant families]
\label{conj:non-escape-small-discriminant}
There exists $\delta_0\in(0,3)$ such that the following holds.
Fix $\delta\in(\delta_0,3)$, $i\in\{0,1\}$, and
$\eta\in(0,1/12)$, and put $m=4-\delta$.
Let $Y=Y(H)$ satisfy
\[
2\leq Y\leq H^{m(1/12-\eta)},
\]
and define
\[
\mathcal P(Y)
=
\{(\alpha,\beta)\in\mathbb Z_{\mathrm{prim}}^2:
Y\leq|\alpha|,|\beta|<2Y\}.
\]
Set
\[
J_m^{(i)}(H;Y)
=
\sum_{P\in\mathcal P(Y)}
\#\{E\in\mathcal E_m^{(i)}(H):f_E(P)=1\}
\]
and
\[
J_{m,T}^{(i)}(H;Y)
=
\sum_{P\in\mathcal P(Y)}
\#\left\{
E\in\mathcal E_m^{(i)}(H):
f_E(P)=1,\quad
\operatorname{Im}(\tau_{f_E})>T
\right\},
\]
where $f_E$ is the chosen reduced form associated to $E$
and $\tau_{f_E}$ is its reduced shape point.
Then, whenever $J_m^{(i)}(H;Y)\to\infty$,
\[
\lim_{T\to\infty}\limsup_{H\to\infty}
\frac{J_{m,T}^{(i)}(H;Y)}{J_m^{(i)}(H;Y)}
=0.
\]
\end{question}

This question asks whether averaged non-escape persists
after restricting to the small-discriminant families
$\mathcal E_m^{(i)}(H)$. The averaging is over all primitive
solutions of $f_E(P)=1$ in $\mathcal P(Y)$, rather than
only the distinguished solution $P_E$.
Each model is counted with the number of such solutions.

The range of $Y$ parallels
Theorem~\ref{thm:non-escape-averaged}: since
$|\Disc(f_E)|=|\Delta(E)|/16\asymp H^m$, the discriminant
scale $X$ is replaced by $H^m$.
The theorem motivates this question but does not establish
non-escape under the additional restrictions defining
$\mathcal E_m^{(i)}(H)$.

\clearpage 
\begin{appendices}
\section{Thin families of elliptic curves}\label{app:thin-families}

In this appendix, we investigate families of elliptic curves presented by integral short Weierstrass models for which the discriminant has atypical size relative to the height. We first construct thin families in which cancellation between the terms $4A^3$ and $27B^2$ forces the discriminant to be substantially smaller than its generic scale. We then associate a binary cubic form to each model and review Cremona's reduction algorithm, which is defined by reducing an appropriate quadratic covariant in the upper half-plane.

We examine how the discriminant scale affects the roots of
$x^3+Ax+B$, the coefficients of the Cremona-reduced form, and
the associated reduced covariant point in the upper half-plane.
These numerical experiments motivated both our study of escape
and non-escape of mass in Section~\ref{sec:mass-thue} and the questions posed in
Section~\ref{sec:questions}. All experiments reported here were carried out using
SageMath.

Let $E$ be an elliptic curve over $\mathbb Q$ presented by a
short Weierstrass equation
\[
E:\ y^2=x^3+Ax+B,
\qquad A,B\in\mathbb Z.
\]
We define the naive height of this model by
$H(E)=\max\{|A|^3,|B|^2\}$. Thus
$|A|\leq H(E)^{1/3}$ and $|B|\leq H(E)^{1/2}$.
The discriminant of $E$ is
\[
\Delta(E)=-16\bigl(4A^3+27B^2\bigr).
\]

For the families considered below, we introduce a scale
parameter $H\geq1$ by requiring $|A|\asymp H^2$ and
$|B|\asymp H^3$. With this normalization, $H(E)\asymp H^6$.
If the two terms $4A^3$ and $27B^2$ do not nearly cancel,
one expects
\[
|\Delta(E)|\asymp |A|^3+|B|^2\asymp H^6.
\]

\begin{table}[H]
\centering
\renewcommand{\arraystretch}{1.3}
\begin{tabular}{|c||c||c||c|}
\hline
$A$ & $B$ & $H$ & $\Delta(E)$ \\
\hline
$-365$ & $2674$ & $10$ & $23216768$ \\
\hline
$-34877$ & $2496714$ & $10^2$ & $22265043136640$ \\
\hline
$-3218335$ & $2222357657$ & $10^3$ & $-11647533605319023$ \\
\hline
\end{tabular}
\caption{Examples of elliptic curves without cancellation in the discriminant.}
\label{tab:no-cancellation}
\end{table}

Our goal is to construct thin families for which the discriminant is substantially smaller than the generic scale $H^6$.

\subsection{Construction of elliptic curves with prescribed discriminant size}

Fix $H\geq1$, $m\in(1,6)$, and constants $0<c<c'$.
We seek integral short Weierstrass models satisfying
$cH^m\leq|\Delta(E)|\leq c'H^m$.

Choose
\[
B\in[H^3,3H^3]\cap\mathbb Z,
\]
so that $|B|\asymp H^3$. We impose the condition
\[
4A^3+27B^2
\in
\left[
\frac{cH^m}{16},
\frac{c'H^m}{16}
\right]
\]
or
\[
4A^3+27B^2
\in
\left[
-\frac{c'H^m}{16},
-\frac{cH^m}{16}
\right].
\]
The first interval gives negative elliptic discriminant,
and the second gives positive elliptic discriminant.

Equivalently, $A^3$ must lie in one of the intervals
\[
\left[
\frac{cH^m}{64}-\frac{27}{4}B^2,
\frac{c'H^m}{64}-\frac{27}{4}B^2
\right]
\]
or
\[
\left[
-\frac{c'H^m}{64}-\frac{27}{4}B^2,
-\frac{cH^m}{64}-\frac{27}{4}B^2
\right].
\]
Since $B\asymp H^3$, these intervals are centred at a point of size
\[
-\frac{27}{4}B^2\asymp-H^6.
\]
Consequently, every admissible integer $A$ satisfies
\[
|A|\asymp H^2.
\]

Whenever one of the above intervals contains an integral cube $A^3$, the corresponding model satisfies
\[
|\Delta(E)|
=
16|4A^3+27B^2|
\asymp
H^m.
\]

The admissible interval for $A^3$ has length comparable to
$H^m$. Near the relevant negative values of $A$, where
$|A|\asymp H^2$, consecutive cubes have spacing
$(A+1)^3-A^3\asymp H^4$.
Equivalently, the corresponding interval for $A$ has
length comparable to $H^{m-4}$.
For fixed $m>4$, this length grows with $H$.
For fixed $m<4$, it tends to zero, so the interval contains
at most one integer for sufficiently large $H$, and may
contain none. This gives an arithmetic reason for the
threshold $m=4$.

\begin{table}[H]
\centering
\renewcommand{\arraystretch}{1.3}
\begin{tabular}{|c||c||c||c||c|}
\hline
$A$ & $B$ & $m$ & $H$ & $\Delta(E)$ \\
\hline
$-288$ & $1883$ & $4.5$ & $10$ & $-2913840$ \\
\hline
$-209$ & $1163$ & $4$ & $10$ & $-32752$ \\
\hline
$-29315$ & $1931892$ & $4.5$ & $10^2$ & $870921152$ \\
\hline
$-22716$ & $1317789$ & $4$ & $10^2$ & $88331472$ \\
\hline
$-36211$ & $2652216$ & $3.5$ & $10^2$ & $7360192$ \\
\hline
$-2131999$ & $1198198016$ & $4.5$ & $10^3$ & $35474984081344$ \\
\hline
$-2740356$ & $1746057703$ & $4$ & $10^3$ & $1198527590736$ \\
\hline
$-3658962$ & $2693921661$ & $3.5$ & $10^3$ & $30911386320$ \\
\hline
$-3803630$ & $2855259004$ & $3$ & $10^3$ & $730297088$ \\
\hline
\end{tabular}
\caption{Examples of elliptic curves with cancellation in the discriminant.}
\label{tab:cancellation}
\end{table}

Comparing Tables~\ref{tab:no-cancellation} and~\ref{tab:cancellation}, we see that cancellation produces models whose discriminants are substantially smaller than those of generic models of comparable height. This leads naturally to the question of how such thinness affects the associated binary cubic forms and their reduction.

\subsection{The modular group and its fundamental domain}

Let $\mathfrak H=\{z\in\mathbb C:\Im(z)>0\}$ be the upper
half-plane. The group $\SL_2(\mathbb R)$ acts on $\mathfrak H$
by Möbius transformations:
\[
g\cdot z=\frac{az+b}{cz+d},
\qquad
g=\begin{pmatrix}a&b\\c&d\end{pmatrix}\in\SL_2(\mathbb R).
\]
A direct calculation gives
$\Im(g\cdot z)=\Im(z)/|cz+d|^2$, so the upper half-plane
is preserved.

Since $-I$ acts trivially, we consider the modular group
$G=\SL_2(\mathbb Z)/\{\pm I\}$. The matrices
\[
S=\begin{pmatrix}0&-1\\1&0\end{pmatrix},
\qquad
T=\begin{pmatrix}1&1\\0&1\end{pmatrix}
\]
act by $S\cdot z=-1/z$ and $T\cdot z=z+1$.
We consider the standard modular fundamental domain
\[
\mathfrak F
=\left\{z\in\mathfrak H:|z|\geq1,\ |\Re(z)|\leq\tfrac12\right\}.
\]
The following theorems and corollary can be found in
Chapter~VII of \cite{serre2012course}.

\begin{theorem}\label{thm:modular-fundamental-domain}
The set $\mathfrak F$ is a fundamental domain for the action of $G$ on $\mathfrak H$. More precisely:
\begin{enumerate}
\item every $G$-orbit meets $\mathfrak F$;
\item two distinct points of $\mathfrak F$ can be equivalent only through the usual boundary identifications.
\end{enumerate}
\end{theorem}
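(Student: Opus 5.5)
The plan is the classical argument from Serre's Chapter~VII, built on the formula $\Im(g\cdot z)=\Im(z)/|cz+d|^2$ recorded just above.

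\emph{Every orbit meets $\mathfrak F$.} Let $G'$ be the subgroup of $G$ generated by $S$ and $T$, and fix $z\in\mathfrak H$. For $g\in G'$ with bottom row $(c,d)$, the pairs $(c,d)\in\mathbb Z^2$ satisfying $|cz+d|\le 1$ are finitely many, since $z,1$ are $\mathbb R$-linearly independent and hence $\{cz+d\}$ is a lattice in $\mathbb C$. Therefore $\Im(g\cdot z)$ takes only finitely many values above $\Im z$, and we may choose $g\in G'$ maximizing $\Im(g\cdot z)$. Apply a power $T^n$ so that $w=T^ng\cdot z$ satisfies $|\Re w|\le 1/2$; this does not change the imaginary part. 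If $|w|<1$, then $\Im(S\cdot w)=\Im w/|w|^2>\Im w$, contradicting maximality. Hence $w\in\mathfrak F$.

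\emph{Uniqueness up to boundary identifications.} Take $z,g\cdot z\in\mathfrak F$ with $g\ne\pm I$, and swap $z$ and $g\cdot z$ if necessary so that $\Im(g\cdot z)\ge\Im z$. This gives $|cz+d|\le 1$. Every $z\in\mathfrak F$ has $\Im z\ge\sqrt3/2$, while $|cz+d|\ge |c|\Im z$, so $|c|\le 1$. Split into cases.
\begin{itemize}
\item If $c=0$, then $d=\pm1$ and $g$ is a translation by $b$. Since both real parts lie in $[-1/2,1/2]$, either $b=0$ or $\Re z=\pm1/2$ with $b=\mp1$; this is the vertical-side identification.
\item If $c=\pm1$ (normalize to $c=1$), then $|z+d|\le1$ with $|z|\ge1$ forces $d=0$ and $|z|=1$, or $d=\pm1$ and $z\in\{\rho,-\bar\rho\}$. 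In the first subcase $g\cdot z=a-1/z$, which lands in $\mathfrak F$ only when it is the reflection $z\mapsto -\bar z$ on the unit circle, possibly composed with a translation at the corners. The second subcase gives the corner identifications.
\end{itemize}

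\emph{Main obstacle.} I expect the existence step to be the crux, specifically justifying that the maximum of $\Im(g\cdot z)$ over $g\in G'$ is actually attained, via the finiteness of lattice points in the unit disc. The boundary case analysis is routine bookkeeping.
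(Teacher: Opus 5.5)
Your proposal is correct and follows the standard argument of Serre, Chapter~VII. The paper gives no proof of its own for this theorem and simply cites that chapter. As there, you attain the maximum of $\Im(g\cdot z)$ over $\langle S,T\rangle$ using the finiteness of lattice points $cz+d$ with $|cz+d|\le 1$, and for uniqueness you use $|cz+d|\ge |c|\,\Im z\ge |c|\sqrt3/2$ to force $|c|\le 1$, followed by the case analysis on $c\in\{0,\pm1\}$.
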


\begin{corollary}
The canonical map
\[
\mathfrak F
\longrightarrow
G\backslash\mathfrak H
\]
is surjective, and its restriction to the interior of $\mathfrak F$ is injective.
\end{corollary}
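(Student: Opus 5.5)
The statement is the corollary to Theorem~\ref{thm:modular-fundamental-domain}: the canonical map $\mathfrak F\to G\backslash\mathfrak H$ is surjective, and it is injective on the interior of $\mathfrak F$. I would deduce both parts directly from the two assertions of that theorem.

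\textbf{Surjectivity.} Take a class $[z]\in G\backslash\mathfrak H$. By part~(1) of Theorem~\ref{thm:modular-fundamental-domain}, the orbit $G\cdot z$ meets $\mathfrak F$, so there is some $w\in\mathfrak F$ with $w\in G\cdot z$. Then $w$ maps to $[z]$, which proves surjectivity.

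\textbf{Injectivity on the interior.} Let $z,z'$ be interior points of $\mathfrak F$ with the same image, so $z'=g\cdot z$ for some $g\in G$. I need to show $z=z'$. By part~(2), if $z\neq z'$ the equivalence must come from one of the usual boundary identifications:
\begin{itemize}
\item $\Re z=\pm\tfrac12$ with $z'=z\mp1$;
\item $|z|=1$ with $z'=-1/z$.
\end{itemize}
Both cases place $z$ and $z'$ on $\partial\mathfrak F$, that is, on the lines $|\Re z|=\tfrac12$ or the arc $|z|=1$.

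The interior of $\mathfrak F$ is the open set $\{|z|>1,\ |\Re z|<\tfrac12\}$. Points there satisfy strict inequalities, so none of them lies on these boundary curves. This contradicts the assumption that $z,z'$ are interior, hence $z=z'$.

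\textbf{Main obstacle.} The only delicate point is what ``the usual boundary identifications'' means in part~(2). I would make it precise as in Serre, Chapter~VII: if $z\neq z'$ in $\mathfrak F$ are $G$-equivalent, then either $\Re z=\pm\tfrac12$ and $z=z'\pm1$, or $|z|=1$ and $z'=-1/z$. In both cases $z,z'\in\partial\mathfrak F$, and that is exactly the form of part~(2) the injectivity argument uses.
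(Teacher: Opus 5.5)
Your argument is correct and is exactly the standard deduction from Theorem~\ref{thm:modular-fundamental-domain} in Serre, Chapter~VII. The paper states this corollary without a proof of its own and simply cites that source. You are also right that part~(2) has to be read in Serre's precise form ($\Re z=\pm\tfrac12$ and $z'=z\mp1$, or $|z|=1$ and $z'=-1/z$), since both alternatives force the points onto $\partial\mathfrak F$.
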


\begin{theorem}
The modular group $G$ is generated by $S$ and $T$.
\end{theorem}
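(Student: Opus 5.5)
The statement to prove is that the modular group $G$ is generated by $S$ and $T$. I would follow Serre's classical argument, which uses the fundamental-domain property of $\mathfrak F$ (Theorem~\ref{thm:modular-fundamental-domain}) together with a height-maximization trick.

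Let $G'$ denote the subgroup of $G$ generated by $S$ and $T$. The first step is to show that every $z\in\mathfrak H$ can be moved into $\mathfrak F$ by an element of $G'$. Fix $z$ and apply the formula
$$\Im(g\cdot z)=\frac{\Im z}{|cz+d|^2}$$
to elements $g\in G'$. Since $(c,d)$ ranges over integer pairs and $z$ is fixed, only finitely many pairs satisfy $|cz+d|<1$. Hence $\Im(g\cdot z)$ attains a maximum over $g\in G'$, at some $g_0$.

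Next, choose $n\in\mathbb Z$ so that $z'=T^ng_0\cdot z$ satisfies $|\Re z'|\le\frac12$; translation by $T^n$ does not change the imaginary part. If $|z'|<1$, then
$$\Im(S\cdot z')=\frac{\Im z'}{|z'|^2}>\Im z',$$
which contradicts maximality. Therefore $z'\in\mathfrak F$.

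For the generation statement, fix an interior point $z_0$ of $\mathfrak F$, for instance $z_0=2i$. Let $g\in G$ be arbitrary and put $z=g\cdot z_0$. By the first step there is $g'\in G'$ with $g'g\cdot z_0\in\mathfrak F$. Both $z_0$ and $g'g\cdot z_0$ then lie in $\mathfrak F$ and are $G$-equivalent. Since $z_0$ is interior, the second part of Theorem~\ref{thm:modular-fundamental-domain} forces $g'g\cdot z_0=z_0$.

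The main obstacle is ruling out a nontrivial stabilizer: I must show that an interior point such as $2i$ has trivial stabilizer in $G$. I would check this directly. The relation $g\cdot 2i=2i$ with $g=\begin{pmatrix}a&b\\c&d\end{pmatrix}$ forces $|2ci+d|=1$, hence $c=0$ and $d=\pm1$. Then $g=\pm T^b$, and fixing $2i$ forces $b=0$. So $g'g=\pm I$, which is the identity in $G$, and therefore $g=(g')^{-1}\in G'$. This proves $G=G'$.
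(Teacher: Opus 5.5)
Your proposal is correct and is Serre's argument from Chapter~VII of \cite{serre2012course}; the paper gives no proof of its own and simply cites that source. The only difference is that you check directly that $2i$ has trivial stabilizer in $G$. That check is needed here, because the paper's version of Theorem~\ref{thm:modular-fundamental-domain} omits Serre's stabilizer clause.
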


\subsection{Reduction in the upper half-plane}

Let $z_0\in\mathfrak H$. We describe the standard algorithm
for moving $z_0$ into $\mathfrak F$. Set $z=z_0$ and $U=I_2$.

First choose an integer $k$ nearest to $\Re(z)$ and replace
\[
z\longleftarrow z-k,
\qquad
U\longleftarrow T^{-k}U.
\]
Then $|\Re(z)|\leq\frac12$. If $|z|<1$, replace
\[
z\longleftarrow-\frac1z,
\qquad
U\longleftarrow SU.
\]
We repeat these two steps until $z\in\mathfrak F$.
At the end of the procedure, $z_F=U\cdot z_0\in\mathfrak F$.

\begin{lemma}
The reduction algorithm terminates after finitely many steps.
\end{lemma}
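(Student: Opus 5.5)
The plan is to exhibit a positive quantity, depending only on the starting point $z_0$, that strictly increases at each inversion step, and combine this with finiteness of the set of values that quantity can take above $\Im(z_0)$. The natural candidate is $\Im(z)$. Translations $z\mapsto z-k$ leave the imaginary part unchanged. By the formula $\Im(g\cdot z)=\Im(z)/|cz+d|^2$, the inversion $S$ gives $\Im(-1/z)=\Im(z)/|z|^2$, and this is strictly larger than $\Im(z)$, since inversion is only applied when $|z|<1$. So the sequence of imaginary parts is nondecreasing, and it strictly increases at every inversion. Also, a translation followed by the check $|z|\geq1$ ends the algorithm, so it suffices to show that only finitely many inversions occur.

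To bound the number of inversions, I would argue as in Serre. Every intermediate point has the form $g\cdot z_0$ with $g=\begin{pmatrix}a&b\\c&d\end{pmatrix}\in\SL_2(\mathbb Z)$, so $\Im(g\cdot z_0)=\Im(z_0)/|cz_0+d|^2$. Since $c,d$ are integers and $z_0$ is fixed, the inequality $|cz_0+d|\leq1$ has only finitely many solutions $(c,d)$, because $cz_0+d$ runs over a lattice in $\mathbb C$. Hence the set of values $\{\Im(g\cdot z_0):g\in\SL_2(\mathbb Z)\}\cap[\Im(z_0),\infty)$ is finite.

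An infinite run of the algorithm would produce infinitely many inversions, hence an infinite strictly increasing sequence of elements of this finite set, which is impossible. Therefore only finitely many inversions occur. Between two consecutive inversions there is exactly one translation step, so the total number of steps is finite, and the final point lies in $\mathfrak F$ by the stopping rule. The only delicate point is making sure the strict increase is genuine, that is, that the algorithm never inverts when $|z|=1$. This holds because the stopping condition uses $|z|\geq1$.
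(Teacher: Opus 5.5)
Your proposal is correct and is essentially the paper's argument: the imaginary part is unchanged by translations and strictly increases at each inversion (since inversion only happens when $|z|<1$), and an infinite run is ruled out by the finiteness of integer pairs $(c,d)$ with $|cz_0+d|$ bounded. Your version makes the bound explicit as $|cz_0+d|\leq 1$ and adds the step-count bookkeeping, but the route is the same.
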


\begin{proof}
Translations leave the imaginary part unchanged. An inversion
is applied only when $|z|<1$, in which case
\[
\Im\left(-\frac1z\right)
=\frac{\Im(z)}{|z|^2}>\Im(z).
\]
Thus every inversion strictly increases the imaginary part.

Suppose that infinitely many inversions occur. Every point
produced by the algorithm lies in the $\SL_2(\mathbb Z)$-orbit
of $z_0$, so it has the form $\gamma\cdot z_0$, where
\[
\gamma=\begin{pmatrix}a&b\\c&d\end{pmatrix}\in\SL_2(\mathbb Z),
\qquad
\Im(\gamma\cdot z_0)=\frac{\Im(z_0)}{|cz_0+d|^2}.
\]
The imaginary parts after successive inversions form a strictly
increasing sequence and are bounded below by a fixed positive
number. Consequently, the quantities $|cz_0+d|$ are bounded
above. Because $\Im(z_0)>0$, only finitely many integer pairs
$(c,d)$ satisfy such a bound. This contradicts the existence
of infinitely many strict increases. Hence the algorithm
terminates.
\end{proof}

\subsection{Covariants and Cremona reduction}

Let $E:\ y^2=x^3+Ax+B$ be an elliptic curve with integer
coefficients. We associate to $E$ the binary cubic form
$F_E(X,Y)=X^3+AXY^2+BY^3$. More generally, let
$g(X,Y)=aX^3+bX^2Y+cXY^2+dY^3$ be an integral binary cubic
form. Its discriminant is
\[
\Disc(g)=b^2c^2-4ac^3-4b^3d-27a^2d^2+18abcd.
\]
For the form associated to the elliptic curve, one has
$\Delta(E)=16\Disc(F_E)$. We use the quadratic covariants
described in \cite{cremona1999reduction}. For an irreducible
binary cubic form $g$, the construction of the associated
point $z_g\in\mathfrak H$ depends on the sign of $\Disc(g)$.

\subsubsection*{Positive discriminant}

Suppose that $\Disc(g)>0$. The Hessian quadratic form of $g$ is
\[
H_g(X,Y)
=(b^2-3ac)X^2+(bc-9ad)XY+(c^2-3bd)Y^2.
\]
Its discriminant satisfies $\Disc(H_g)=-3\Disc(g)$.
Hence $H_g(X,1)$ has negative discriminant and therefore
a unique root $z_g\in\mathfrak H$.

\subsubsection*{Negative discriminant}

Suppose that $\Disc(g)<0$. Then $g(X,1)$ has a unique real
root, which we denote by $\alpha$. Associated to $g$ and
$\alpha$ is the quadratic covariant $J_2(X)=h_0X^2+h_1X+h_2$,
where
\[
\begin{aligned}
h_0&=9a^2\alpha^2+6ab\alpha+6ac-b^2,\\
h_1&=6ab\alpha^2+6(b^2-ac)\alpha+2bc,\\
h_2&=3ac\alpha^2+3(bc-3ad)\alpha+2c^2-3bd.
\end{aligned}
\]
The discriminant of $J_2$ satisfies
$\Disc(J_2)=-4\Disc(H_g)=12\Disc(g)$.
Since $\Disc(g)<0$, we have $\Disc(J_2)<0$.
Thus $J_2$ has a unique root $z_g\in\mathfrak H$.

\subsubsection*{The Cremona-reduced form}

In either discriminant case, apply the upper-half-plane
reduction algorithm to $z_g$. Let $U\in\SL_2(\mathbb Z)$
be the resulting matrix, so that $U\cdot z_g\in\mathfrak F$.

With the action convention used throughout this paper,
the corresponding Cremona-reduced binary cubic form is
$f=U\cdot g$. By equivariance of the relevant quadratic
covariant, its associated upper-half-plane point is
\[
z_f=U\cdot z_g\in\mathfrak F.
\]
Since $\det(U)=1$, the discriminant is preserved:
$\Disc(f)=\Disc(g)$.
As shown in Tables~\ref{tab:positive-reduced-forms}
and~\ref{tab:negative-reduced-forms}, our numerical experiments
suggest that greater cancellation in the discriminant is
associated with smaller coefficients of the reduced binary
cubic forms.

\begin{table}[H]
\centering
\renewcommand{\arraystretch}{1.3}
\begin{tabular}{|c||c||c||c||c||c||c|}
\hline
$A$ & $B$ & $a$ & $b$ & $c$ & $d$ & $\Delta(E)$ \\
\hline
$-3218786$ & $2222357657$ & $1$ & $3108$ & $1102$ & $-369983$ & $710657395270177616$ \\
\hline
$-3267442$ & $2273316211$ & $53$ & $2525$ & $1759$ & $-712$ & $981855615255760$ \\
\hline
$-2433084$ & $1460776476$ & $73$ & $438$ & $-456$ & $-548$ & $5638030804224$ \\
\hline
$-2242607$ & $1292640842$ & $8$ & $-52$ & $-465$ & $296$ & $77565250304$ \\
\hline
\end{tabular}
\caption{Cremona-reduced binary cubic forms associated to elliptic curves of positive discriminant.}
\label{tab:positive-reduced-forms}
\end{table}

\begin{table}[H]
\centering
\renewcommand{\arraystretch}{1.3}
\begin{tabular}{|c||c||c||c||c||c||c|}
\hline
$A$ & $B$ & $a$ & $b$ & $c$ & $d$ & $\Delta(E)$ \\
\hline
$-3218335$ & $2222357657$ & $1$ & $3108$ & $1553$ & $97253$ & $-186360537685104368$ \\
\hline
$-3276732$ & $2283018953$ & $-138$ & $-243$ & $-2235$ & $2617$ & $-204313261711536$ \\
\hline
$-2550369$ & $1567662772$ & $-2$ & $-15$ & $-522$ & $23036$ & $-1033119582912$ \\
\hline
$-2000930$ & $1089421538$ & $44$ & $-28$ & $-25$ & $278$ & $-8900463808$ \\
\hline
\end{tabular}
\caption{Cremona-reduced binary cubic forms associated to elliptic curves of negative discriminant.}
\label{tab:negative-reduced-forms}
\end{table}

\subsection{Heuristics relating the discriminant to the geometry of the roots}

Assume that $|A|\asymp H^2$ and $|B|\asymp H^3$, and suppose
that $|\Delta(E)|\asymp H^m$, where $1<m<6$. The substitution
$x=Ht$ gives
\[
x^3+Ax+B
=H^3\left(t^3+\frac{A}{H^2}t+\frac{B}{H^3}\right).
\]
Thus the roots of the defining cubic polynomial are naturally
of size $H$.

\subsubsection*{Clustering of real roots}

Suppose that $\Delta(E)>0$. Then the polynomial
$p(x)=x^3+Ax+B$ has three distinct real roots $r_1<r_2<r_3$.
Its discriminant satisfies
\[
\Delta(E)=16(r_1-r_2)^2(r_1-r_3)^2(r_2-r_3)^2.
\]

In the generic case $m=6$, all three root separations are of
order $H$. In the thin families under consideration, the
numerical data indicate that one adjacent pair of roots
becomes closer while the other two separations remain of
order $H$. If, for example,
$|r_1-r_2|\asymp|r_1-r_3|\asymp H$, then
$|\Delta(E)|\asymp H^4|r_2-r_3|^2$. Consequently,
\[
|r_2-r_3|\asymp H^{-2}|\Delta(E)|^{1/2}\asymp H^{m/2-2}.
\]

Thus $m=4$ is a natural transition point. When $m<4$, the
relevant pair of roots approaches one another as $H$ increases.
When $m=4$, their separation remains of constant order, while
for $m>4$ their separation grows with $H$. In the generic
case $m=6$, the separation is again of order $H$.

\begin{table}[H]
\centering
\renewcommand{\arraystretch}{1.3}
\begin{tabular}{|c|c|c|c|c|c|}
\hline
$A$ & $B$ & $r_1$ & $r_2$ & $r_3$ & $m$ \\
\hline
$-3368342$ & $2279842127$ & $-2109.3101$ & $877.3135$ & $1231.9967$ & $6$ \\
\hline
$-2431361$ & $1459224931$ & $-1800.5040$ & $900.0239$ & $900.4801$ & $4$ \\
\hline
$-2242607$ & $1292640842$ & $-1729.2029$ & $864.5963$ & $864.6066$ & $3$ \\
\hline
\end{tabular}
\caption{Roots of $x^3+Ax+B$ for elliptic curves of positive discriminant. Values are rounded to four decimal places.}
\label{tab:positive-roots}
\end{table}

\begin{figure}[H]
\centering
\begin{tikzpicture}

\begin{axis}[
    axis lines=middle,
    xlabel={$\Re(z)$},
    ylabel={$\Im(z)$},
    xmin=-1900, xmax=-1700,
    ymin=-0.02, ymax=0.02,
    width=7.5cm,
    height=6cm,
    grid=both,
    tick label style={font=\small},
    title={Isolated root}
]

\addplot[
    only marks,
    mark=*,
    mark size=2pt,
    color=blue
] coordinates {
    (-1800.5039,0)
};

\node at (axis cs:-1800.5039,0) [above right] {$r_1$};

\end{axis}

\begin{axis}[
    at={(8.5cm,0cm)},
    anchor=south west,
    axis lines=middle,
    xlabel={$\Re(z)$},
    xmin=899.9, xmax=900.6,
    ymin=-0.02, ymax=0.02,
    width=7.5cm,
    height=6cm,
    grid=both,
    tick label style={font=\small},
    title={Clustered roots}
]

\addplot[
    only marks,
    mark=*,
    mark size=2pt,
    color=blue
] coordinates {
    (900.0239,0)
    (900.4801,0)
};

\node at (axis cs:900.0239,0) [above left] {$r_2$};
\node at (axis cs:900.4801,0) [below right] {$r_3$};

\end{axis}

\end{tikzpicture}
\caption{Real roots of $x^3-2431361x+1459224931$ in the complex plane for $m=4$. The two closely spaced roots are shown in a separate panel for clarity.}
\label{fig:roots-positive-m4}
\end{figure}

\subsubsection*{Migration of complex roots}

Suppose that $\Delta(E)<0$. Then $p(x)=x^3+Ax+B$ has one
real root and a pair of complex conjugate roots. Write
$r_1\in\mathbb R$, $r_2=u+iv$, and $r_3=u-iv$, where $v>0$.
Since $r_2-r_3=2iv$, the discriminant factorization gives
\[
|\Delta(E)|
=16|r_1-r_2|^2|r_1-r_3|^2|r_2-r_3|^2.
\]
Because $|r_1-r_2|=|r_1-r_3|$, we obtain
$|\Delta(E)|\asymp v^2|r_1-r_2|^4$.

In the families under consideration, the distance from the
real root to the complex pair remains of order $H$. Thus
$|r_1-r_2|\asymp H$, and therefore
$|\Delta(E)|\asymp v^2H^4$. It follows that
\[
v\asymp H^{-2}|\Delta(E)|^{1/2}\asymp H^{m/2-2}.
\]

Thus $m=4$ is again the transition point. When $m<4$, the
nonreal roots approach the real axis. When $m=4$, their
imaginary parts remain of constant order, while for $m>4$
their imaginary parts grow with $H$.

\begin{table}[H]
\centering
\renewcommand{\arraystretch}{1.3}
\begin{tabular}{|c|c|c|c|c|c|}
\hline
$A$ & $B$ & $r_1$ & $r_2$ & $r_3$ & $m$ \\
\hline
$-1943170$ & $1238841822$ & $-1642.3946$ & $821.1973+i\,282.7101$ & $821.1973-i\,282.7101$ & $6$ \\
\hline
$-2461913$ & $1486815829$ & $-1811.7811$ & $905.8905+i\,0.1126$ & $905.8905-i\,0.1126$ & $4$ \\
\hline
$-1922951$ & $1026361707$ & $-1601.2291$ & $800.6146+i\,0.0067$ & $800.6146-i\,0.0067$ & $3$ \\
\hline
\end{tabular}
\caption{Roots of $x^3+Ax+B$ for elliptic curves of negative discriminant. Values are rounded to four decimal places.}
\label{tab:negative-roots}
\end{table}

\begin{figure}[H]
\centering
\begin{tikzpicture}
\begin{axis}[
    axis lines=middle,
    xlabel={$\Re(z)$},
    ylabel={$\Im(z)$},
    xmin=-2000, xmax=2000,
    ymin=-0.02, ymax=0.02,
    grid=both,
    width=10cm,
    height=7cm,
    tick label style={font=\small}
]

\addplot[
    only marks,
    mark=*,
    mark size=2pt,
    color=blue
] coordinates {
    (-1601.2291,0)
};

\node at (axis cs:-1601.2291,0) [above] {$r_1$};

\addplot[
    only marks,
    mark=*,
    mark size=2pt,
    color=blue
] coordinates {
    (800.6145,0.0067)
    (800.6145,-0.0067)
};

\node at (axis cs:800.6145,0.0067) [above right] {$r_2$};
\node at (axis cs:800.6145,-0.0067) [below right] {$r_3$};

\end{axis}
\end{tikzpicture}
\caption{Roots of $x^3-1922951x+1026361707$ in the complex plane for $m=3$.}
\label{fig:roots-negative-m3}
\end{figure}

\subsection{Summary of the computational evidence}

The preceding estimates concern the roots of the cubic polynomial
\[
x^3+Ax+B.
\]
They identify $m=4$ as a natural transition point. When $m<4$, two real roots approach one another in the positive-discriminant case, while the complex conjugate roots approach the real axis in the negative-discriminant case. When $m>4$, the corresponding separations grow with $H$.

We determine the behavior of the upper-half-plane points associated to the Hessian or Julia covariants after reduction to the standard modular fundamental domain. The computations indicate that the relation between the discriminant and the height has a significant effect on the location of the reduced covariant points.

The figures below display the reduced upper-half-plane roots of the Hessian covariant for elliptic curves of positive discriminant. Figure~\ref{fig:hessian-roots-positive-overall} shows the distribution over an extended vertical range, while Figure~\ref{fig:hessian-roots-positive-zoom} gives a magnified view of the lower portion of the fundamental domain.

\begin{figure}[!htbp]
\centering
\includegraphics[width=0.6\textwidth]{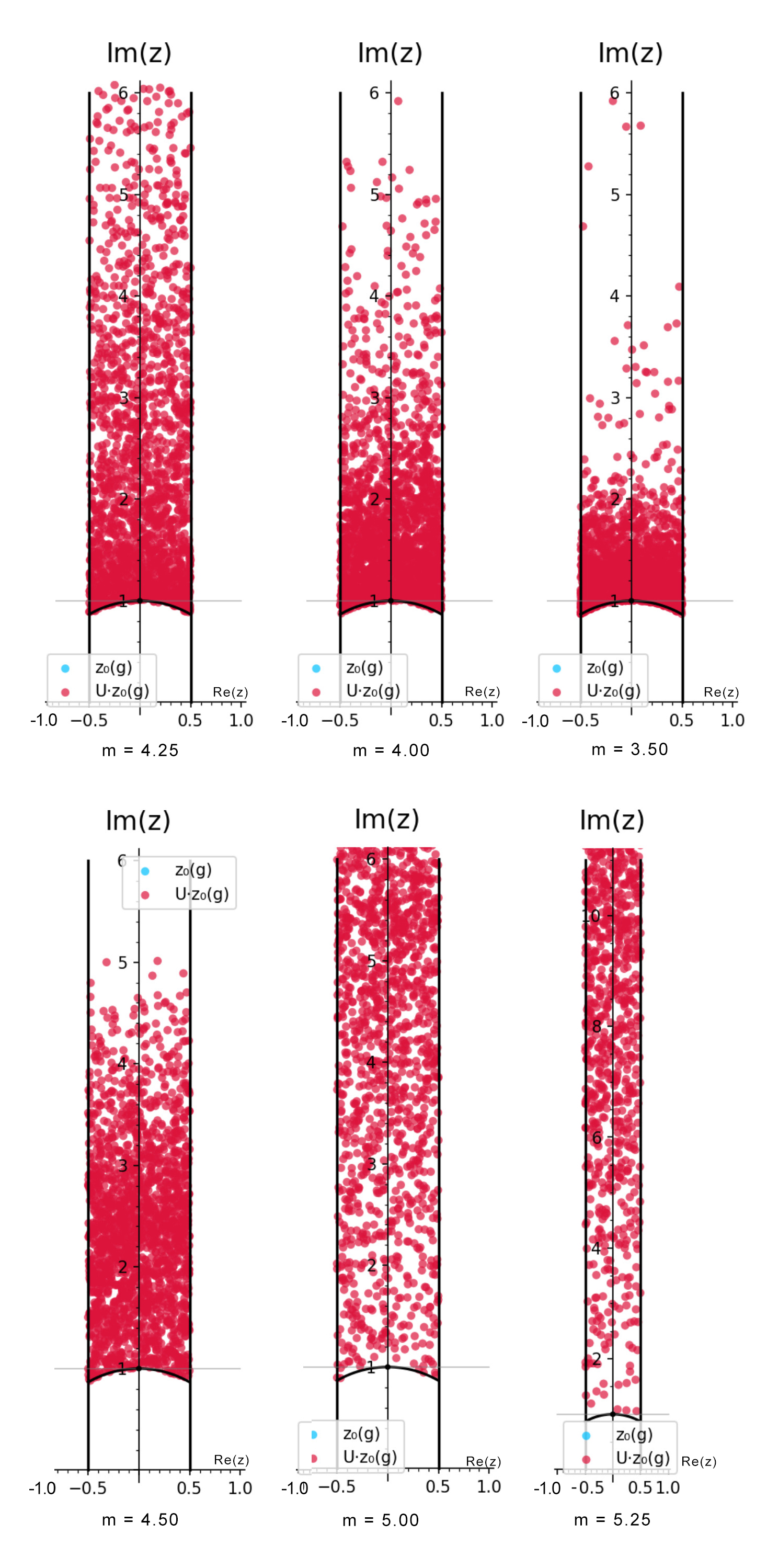}
\caption{Distribution of the reduced upper-half-plane roots of the Hessian covariant for elliptic curves of positive discriminant, shown for several values of $m$. The figure displays the points over an extended vertical range in the standard fundamental domain.}
\label{fig:hessian-roots-positive-overall}
\end{figure}

\begin{figure}[!htbp]
\centering
\includegraphics[width=0.8\textwidth]{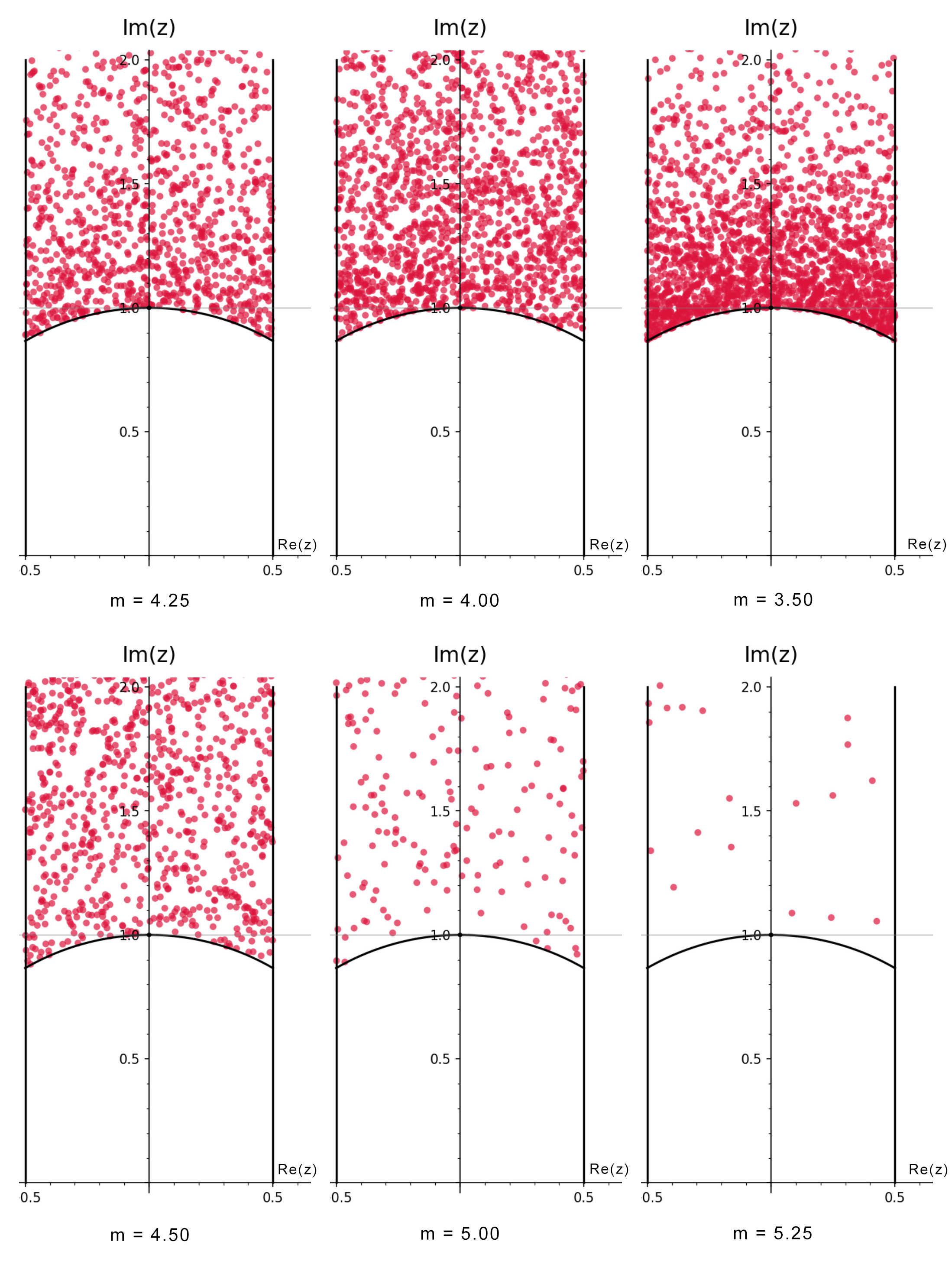}
\caption{Magnified view of the distribution of the reduced Hessian roots near the lower part of the standard fundamental domain, shown for several values of $m$.}
\label{fig:hessian-roots-positive-zoom}
\end{figure}

As illustrated in Figures~\ref{fig:hessian-roots-positive-overall} and~\ref{fig:hessian-roots-positive-zoom}, the numerical data suggest that, in the sampled small-discriminant regimes, many of the reduced points remain in the lower portion of the fundamental domain rather than moving arbitrarily far into the cusp. This motivates the study of non-escape of mass. In other parameter ranges, the associated points may move higher into the cusp, suggesting the possibility of escape of mass. These computations are exploratory and do not by themselves establish either phenomenon.

Motivated by this evidence, we seek suitable notions of escape and non-escape of mass for families arising from integral Weierstrass models. It is more convenient to formulate the counting problem in the
space of binary cubic forms, where we can use established
counting techniques, such as Bhargava's averaging method \cite{bhargava2010density}. The form
$F_E(X,Y)
=
X^3+AXY^2+BY^3$
is monic and therefore satisfies
$F_E(1,0)=1$.
After reduction, this distinguished solution is transported to another primitive integral vector. Thus the natural objects are binary cubic forms equipped with a distinguished integral solution to the corresponding Thue equation. Table~\ref{tab:negative-elliptic-thue-solutions} illustrates this behavior for elliptic curves of negative discriminant.

\begin{table}[H]
\centering
\renewcommand{\arraystretch}{1.3}
\begin{tabular}{|c|c|c|c|}
\hline
$A$ & $B$ & $P=(\alpha,\beta)$ & $m$ \\
\hline
$-3218335$ & $2222357657$ & $(1,0)$ & $5.8$  \\
\hline
$-3081085$ & $2081630455$ & $(5,-7)$ & $4$ \\
\hline
$-2042557$ & $1123593947$ & $(-15,7)$ & $3.4$ \\
\hline
$-3225792$ & $2229988659$ & $(-20,1)$ & $3$ \\
\hline
\end{tabular}
\caption{Integral short Weierstrass models of negative
discriminant and distinguished solutions
$P_E=(\alpha,\beta)$ satisfying $f_E(P_E)=1$, where
$f_E$ is the associated Cremona-reduced binary cubic form.}
\label{tab:negative-elliptic-thue-solutions}
\end{table}
These examples motivate the study of binary cubic forms
equipped with distinguished integral solutions to their
Thue equations. The counting arguments in
Section~\ref{sec:mass-thue} use the fundamental multiset
$\mathcal Fv^{(i)}$ associated with the reduction setup
of Section~\ref{sec:df-reduction}.

\end{appendices}

\clearpage

\bibliography{mybib}
\bibliographystyle{unsrt}
\end{document}